\documentclass[review,onefignum,onetabnum]{siamart220329}

\makeatletter
\AtBeginDocument{%
  \def\refstepcounter@optarg[#1]#2{%
    \cref@old@refstepcounter{#2}%
    \cref@constructprefix{#2}{\cref@result}%
    \@ifundefined{cref@#1@alias}%
      {\def\@tempa{#1}}%
      {\def\@tempa{\csname cref@#1@alias\endcsname}}%
    \protected@edef\cref@currentlabel{%
      [\@tempa][\arabic{#2}][\cref@result]%
      \csname p@#2\endcsname\csname the#2\endcsname}}%
}
\makeatother

\usepackage{amsfonts}
\usepackage{graphicx}
\usepackage{epstopdf}
\usepackage{algorithmic}
\usepackage{amssymb} 
\ifpdf
  \DeclareGraphicsExtensions{.eps,.pdf,.png,.jpg}
\else
  \DeclareGraphicsExtensions{.eps}
\fi

\usepackage{amsmath}
\usepackage[shortlabels]{enumitem}
\usepackage[symbol]{footmisc}
\usepackage{tikz}
\usetikzlibrary{arrows.meta,calc,decorations.pathmorphing,decorations.pathreplacing,positioning}
\newcommand{\ml}{\left(}
\newcommand{\mr}{\right)}

\newcommand{\intr}{\text{int}}
\newcommand{\p}{\partial}
\newcommand{\D}{\mathrm{d}}

\DeclareMathOperator{\diag}{diag}
\DeclareMathOperator{\spn}{span}

\nolinenumbers

\newsiamremark{remark}{Remark}
\newsiamremark{hypothesis}{Hypothesis}
\newsiamthm{claim}{Claim}
\newsiamthm{assumption}{Assumption}

\headers{Generic Structural Stability for $n \times n$ Conservation Laws}{H. K. Tan and A. L. Bertozzi}

\title{Generic Structural Stability for Riemann Solutions to $n \times n$ Systems of Hyperbolic Conservation Laws\thanks{Submitted to the editors September 23, 2026.
\funding{This work was supported by the Simons Foundation Math + X award number 510776, ONR grant N00014-23-1-2565, and NSF grant DMS-2407006, and the DARPA ExpMath program.} }}

\author{Hong Kiat Tan\thanks{Department of Mathematics, UCLA, Los Angeles, CA 90095, US (\email{maxtanhk@math.ucla.edu}, \email{bertozzi@g.ucla.edu})}
\and
Andrea L. Bertozzi\footnotemark[2]
}

\usepackage{amsopn}

\begin{document}

\maketitle

\begin{abstract}
This paper proves generic structural stability for Riemann solutions to $n \times n$ systems of hyperbolic conservation laws in one spatial dimension. Under assumptions of strict hyperbolicity, genuine nonlinearity, and a regular manifold hypothesis on the Rankine--Hugoniot map, we show that for almost every pair of left and right states, any $n$-wave Riemann solution consisting of Lax-admissible shocks and rarefactions is structurally stable under perturbations of the left state, the right state, and the flux function in the $C^2$ topology. The central new idea is \emph{sequential transversality}, which chains the $n$ waves through intermediate states and transports their tangent contributions to a common reference point via pushforward maps, reducing the structural stability condition to the invertibility of an $n \times n$ transversality matrix.
We apply the results to the $p$-system, polydisperse particle-laden thin films, and machine-learned flux approximations.\end{abstract}

\begin{keywords}
$n \times n$ systems of conservation laws, Riemann problem, structural stability, transversality, sequential transversality, genericity, implicit function theorem on Banach spaces, Thom's parametric transversality theorem, p-system, particle-laden flows, machine-learned flux approximations.
\end{keywords}

\begin{AMS}
35L65, 35L67, 37C20, 46N20, 57R45, 74K35
\end{AMS}


\section{Introduction}
\label{sec:intro}

The Riemann problem for systems of conservation laws is a cornerstone of the theory of nonlinear hyperbolic PDEs, originating with Lax's seminal work~\cite{Lax} on the existence and uniqueness of Riemann solutions in the small-amplitude regime via the implicit function theorem.
For large-amplitude (``strong'') waves, however, the classical implicit function theorem argument is insufficient, and the analysis of how Riemann solutions respond to perturbations requires tools from differential topology.
This paper establishes the generic structural stability of Riemann solutions for general $n\times n$ systems of hyperbolic conservation laws in one spatial dimension.

Consider the system of $n \times n$ conservation laws
\begin{equation}\label{eq:conservation-law}
  u_t + F(u)_x = 0
\end{equation}
with \(u(x,t)\) defined on \((x,t)\in\mathbb{R}\times[0,\infty)\). Let \(U\subset\mathbb{R}^n\) be an open, connected set and let \(F\in C^2(U;\mathbb{R}^n)\). We write \(A(u):=DF(u)\) for the Jacobian of \(F\). The system is supplemented with Riemann initial data
\begin{equation}\label{eq:Riemann-data}
u(x,0) = \begin{cases}
u_l &\text{ for } x < 0, \\
u_r &\text{ for } x > 0,
\end{cases}
\end{equation}
for \(u_l, u_r \in U\).

Throughout we impose three standing assumptions on \(F\). The first two are stated here; the third, the \emph{regular manifold hypothesis} (Assumption~\ref{ass:regular}), requires the Rankine--Hugoniot map and is stated in Section~\ref{ssec:hugoniot} once the necessary machinery has been developed.

\begin{assumption}[Strict hyperbolicity]\label{ass:SH}
For every $u\in U$, the eigenvalues of $A(u)$ are real and distinct, with $\lambda_1(u)<\cdots<\lambda_n(u)$.
\end{assumption}

\begin{assumption}[Genuine nonlinearity]\label{ass:GN}
For each characteristic family \(k\in\{1,\dots,n\}\),
\(
  \nabla \lambda_k(u)\cdot r_k(u)\neq 0 \qquad \text{for all } u\in U,
\)
where \(r_k(u)\) is a right eigenvector of \(A(u)=DF(u)\) associated with \(\lambda_k(u)\).
\end{assumption}

Under these three assumptions, we show that for almost every pair of left and right states $(u_l,u_r)\in U^2$, any $n$-wave Riemann solution consisting of Lax-admissible shocks and rarefactions is structurally stable with respect to perturbations of the left state, the right state, and the flux function $F$ in the $C^2$ topology.

\subsection{Prior work and motivation}

In our prior work~\cite{TanBertozzi2x2}, we proved generic structural stability for the \(2\times 2\) case.
That paper introduced the key conceptual framework of connecting structural stability to transversality of wave curves in the state space, and then establishing transversality as a generic property via a foliated parametric transversality theorem.
This work generalizes these ideas to \(n\times n\) systems and requires fundamentally new ideas rather than a routine extension of previous arguments.
The concluding section of that paper suggested that a coordinate-free approach to the $n\times n$ problem might circumvent the coordinate-dependent graph condition, and the eigenchart construction of Section~\ref{ssec:rarefaction} does so.

Other important prior work on structural stability for systems of conservation laws includes Schecter, Marchesin, and Plohr~\cite{Schecter}, who proved structural stability for \(2\times 2\) systems using traveling wave connections from a viscous approximation.
Their approach imposes transversality assumptions that cannot be verified without knowing the intermediate states in advance.
By contrast, this work shows that the analogous transversality condition holds \emph{generically}, that is, for almost every choice of left and right states, without prior knowledge of the solution.
Additionally, Majda~\cite{Majda1983Stability,Majda1984Compressible} and Majda--Thomann~\cite{MajdaThomann1987CPDE} studied structural stability for multidimensional shock fronts, remarking that the multidimensional problem has \emph{stronger} structural stability than the one-dimensional case.
Their notion of structural stability, however, concerns smooth perturbations to the initial data rather than perturbations to the flux, and does not address the \(n\)-wave Riemann problem studied here.

A different line of work, begun by Isaacson, Marchesin, Palmeira, and Plohr~\cite{Global_formalism} and continued in~\cite{Azevedo,Topo_Riemann_2024}, resolves Riemann problems for pairs of conservation laws inside the wave manifold, a three-dimensional manifold whose points represent shock and rarefaction waves and on which wave curves that overlap in state space are disentangled, worked out in detail for quadratic flux functions in~\cite{Topo_Riemann_2024}.
There a Riemann problem is solved by intersecting the fast wave curve with a surface generated from the slow wave curve by shock curves, transversality of that intersection gives stability of the solution, and the construction aims at structural stability for all Riemann data at once rather than near a given solution.
This work treats $n\times n$ systems with general $C^2$ flux, stays in state space, and instead proves that the local transversality condition holds for almost every pair of states.

Writing a multiwave Riemann solution as the zero of a stacked system of wave relations, with invertibility of the derivative in the intermediate states as the nondegeneracy hypothesis, appears in Lin and Schecter~\cite{LinSchecter2003} and Schecter and Szmolyan~\cite{SchecterSzmolyan2009}, who study self-similar solutions of the Dafermos regularization and impose that invertibility on a given solution alongside a viscous-profile condition. This work uses no regularization, reads the same nondegeneracy as a chain of transversality conditions, and proves that it holds generically.

Results related to genericity for conservation laws also exist in the literature.
Schecter, Plohr, and Marchesin~\cite{Schecter2} extended their earlier structural stability analysis by studying Riemann solutions of different codimension, showing that structurally stable codimension-zero solutions have a boundary consisting of codimension-one solutions that lack structural stability.
The Schaeffer Regularity Theorem~\cite{Schaeffer} establishes that for almost every initial data in the Schwartz space, scalar conservation law solutions are piecewise smooth with finitely many shock curves, though this result does not extend to systems~\cite{Caravenna} and excludes Riemann initial data.
Bressan, Chen, and Huang~\cite{bressan_pressureless} have also analyzed singularity formation for pressureless gases with generic smooth initial data.

\subsection{New contributions}

The passage from \(2\times 2\) to \(n\times n\) systems introduces a fundamental geometric challenge.
In the \(2\times 2\) case, structural stability requires the transverse intersection of two one-dimensional wave curves in \(\mathbb{R}^2\).
For an \(n\times n\) system, however, the wave curves are still one-dimensional, but the state space is \(\mathbb{R}^n\).
Two such curves in \(\mathbb{R}^n\) have codimensions summing to \(2(n-1)\), which exceeds \(n\) for \(n\ge 3\), so that generically two wave curves do not intersect at all.
The pairwise intersection framework of the \(2\times 2\) theory is therefore inadequate, and a fundamentally different approach is needed.

The central new idea of this paper is \emph{sequential transversality}.
We chain the \(n\) waves sequentially from \(u_l\) through the intermediate states \(u^{(1)},\dots,u^{(n-1)}\) to \(u_r\), and transport the tangent vectors from each wave curve to a common reference point via \emph{pushforward maps} arising from the implicit function theorem applied to the Rankine--Hugoniot or rarefaction relations linking consecutive states.
The sequential transversality condition then asks whether these transported tangent vectors span the full state space \(\mathbb{R}^n\), which reduces to the nonvanishing of the determinant of an \(n\times n\) \emph{transversality matrix}.
For \(n=2\), sequential transversality recovers the classical pairwise transversality of our prior work, while for \(n\ge 3\) it provides the correct generalization.
The concept is developed in detail in Section~\ref{ssec:transversality-3x3} for the \(3\times 3\) case and in Section~\ref{ssec:transversality-nxn} for the general \(n\times n\) case.

Sequential transversality is a new concept in differential topology, motivated by the Riemann problem for $n\times n$ systems.
In standard transversality~\cite{GolubitskyGuillemin}, two submanifolds share a common point where the span of their tangent spaces is analyzed.
Sequential transversality gives the determinant condition a reading of exactly this form. The first $n-1$ columns of the transversality matrix span the tangent space at $u^{(n-1)}$ of the $(n-1)$-dimensional manifold swept out by the first $n-1$ waves, and the last column is tangent to the backward $n$-wave curve from $u_r$, so the condition says that this curve meets that manifold transversely.
What is new is the way this tangent space arises. The intermediate states \(u^{(1)},\dots,u^{(n-1)}\) are \emph{causally chained}, with each state determined by the previous one through a wave relation.
Perturbing \(u^{(1)}\) propagates through the entire chain, affecting \(u^{(2)}\), then \(u^{(3)}\), and so on.
The transport maps linking consecutive states are determined by the defining equations of the chain rather than by any ambient geometric structure (such as a Riemannian metric or a symplectic form), making sequential transversality a concept in smooth topology that requires only smooth manifolds and smooth relations with a non-degeneracy condition.

A second contribution is a coordinate-free construction of rarefaction curves via the \(C^1\) eigenline bundle and local \emph{eigencharts}, retaining the eigenvalue parameterization \(\xi=\lambda_k\) for a \(C^2\) flux and removing the graph condition imposed in our prior work~\cite{TanBertozzi2x2}. In the eigenvalue-parameterized form (see, e.g.,~\cite{LeVeque}), the rarefaction direction is proportional to \(r_k(u)/(\nabla\lambda_k(u)\cdot r_k(u))\), which is generally only \(C^0\) when \(F\in C^2\), so Picard--Lindel\"of does not give uniqueness for the \(\xi\)-ODE directly. The usual remedies are to require a $C^3$ flux, at the cost of excess regularity, or to switch to arc-length, at the cost of the characteristic-speed coordinate.

Our prior paper instead invoked the graph condition to reparameterize the curve by a state coordinate in a fixed Cartesian chart, recovering a $C^1$ right-hand side. This choice also gives up the parameter $\xi$ and depends on the coordinates, and extending the single-chart reduction to $n\ge 3$ forces additional structural assumptions that quickly become restrictive. Here the intrinsic object is the eigenline bundle $u\mapsto E_k(u)$ rather than any particular eigenvector field. The implicit function theorem with an affine coordinate normalization produces a $C^1$ representative $r_k$ of the eigenline near each state, on a neighborhood we call an \emph{eigenchart} (Lemma~\ref{lem:C1-evec-IFT}). On each eigenchart the unscaled flow ODE $\dot u=r_k(u)$ has a $C^1$ right-hand side and Picard--Lindel\"of applies; genuine nonlinearity makes $\lambda_k$ strictly monotone along the flow, and inverting turns the eigenvalue itself into the curve parameter. The $\xi$-ODE is unchanged under rescaling of the representative, so the chartwise pieces agree on overlaps by uniqueness of the $C^1$ flow and glue into a unique maximal rarefaction curve (Proposition~\ref{prop:Gamma-maximal}). The construction works in any dimension, requires only $C^2$ flux, and keeps the characteristic speed $\xi=\lambda_k$ as the wave parameter, which the lifted viewpoint below uses as its auxiliary speed coordinate.

Iguchi and LeFloch~\cite{IguchiLeFloch2003} also construct rarefaction curves in two steps, an eigenvector ODE reparameterized by a monotone weight, in a smooth local setting with eigenvector regularity taken as given. At bare $C^2$ regularity on a global state space that regularity is the central difficulty, and Lemma~\ref{lem:C1-evec-IFT} resolves it.

A third contribution is a unified \emph{lifted} viewpoint on wave curves in which the wave speed is carried as an auxiliary coordinate. Rather than viewing the Hugoniot locus and the rarefaction curve as subsets of the state space $U$ alone, we lift each to $U\times\mathbb{R}$, pairing a state $u$ with its shock speed $s$ along the Hugoniot locus and with its characteristic speed $\xi=\lambda_k(u)$ along the rarefaction curve. The parameter along each lifted curve then coincides with the wave-speed profile of the solution as it is traversed from left to right in the spacetime diagram, so that the Lax admissibility and gap conditions separating consecutive waves become statements about heights in the lift. Figures~\ref{fig:double-rare-example},~\ref{fig:shock-rare-example}, and~\ref{fig:triple-shock-example} illustrate this picture for three prototypical $2\times 2$ and $3\times 3$ wave configurations. This lifted viewpoint is what places shocks and rarefactions on a common footing in the proofs of the main structural stability and genericity theorems (\Cref{thm:structural-stability,thm:genericity}).

\subsection{Main results}

Under Assumptions~\ref{ass:SH},~\ref{ass:GN}, and~\ref{ass:regular}, the paper establishes three main theorems.
\begin{enumerate}[label=\textup{(\roman*)},leftmargin=*]
\item \textbf{Structural stability} (\cref{thm:structural-stability}).
  If the transversality condition holds at the zero of the grand objective map determined by the solution, then the Riemann solution is structurally stable under perturbations of the left and right states and the flux function. If in addition it holds strictly on a compact set whose interior contains the states of the solution, the perturbed solution is the unique admissible solution of that wave pattern nearby.
  The proof applies the implicit function theorem on Banach spaces (\cref{thm:IFT-Banach}) to the grand objective map~\eqref{eq:extended-objective}.
\item \textbf{Genericity of transversality} (\cref{thm:genericity}).
  For almost every \((u_l,u_r)\in U^2\), the transversality condition holds at every non-degenerate zero of the grand objective map, a zero with \(u^{(k)}\neq u^{(k-1)}\) for all \(k\).
  The proof uses the foliated parametric transversality theorem (\cref{thm:foliated-parametric-transversality}).
\item \textbf{Generic structural stability} (\cref{thm:generic-structural-stability}).
  Combining (i) and (ii), for almost every \((u_l,u_r)\in U^2\), the Riemann solution is structurally stable.
\end{enumerate}

\subsection{Applications}

The structural stability theorems have immediate consequences for a broad class of systems.
We illustrate this with three families of applications in Section~\ref{sec:applications}.
The first is the classical \(p\)-system of compressible isentropic flow in Lagrangian coordinates (Section~\ref{ssec:p-system}), for which we verify all three standing assumptions (strict hyperbolicity, genuine nonlinearity, and the regular manifold hypothesis) via direct computation.
The regular manifold hypothesis is verified by a direct rank computation on the lifted Rankine--Hugoniot Jacobian $D_{(u,s)}H_{u_0}$.
The second is gravity-driven particle-laden thin films (Section~\ref{ssec:PLF}), where a slurry of \(n-1\) distinct particle species leads to an \(n\times n\) system of conservation laws.
The \(n=2\) case recovers the monodisperse system treated in our prior work~\cite{TanBertozzi2x2}.
The general \(n\times n\) framework developed here provides the first structural stability results for polydisperse particle-laden flows.
The third family concerns machine-learned flux approximations (Section~\ref{ssec:ML-flux}).
A rapidly growing body of work in scientific machine learning seeks to learn or approximate the flux function of a conservation law from data, whether through neural networks that learn the physical flux directly~\cite{GoRINNs2025,SymCLaw2026}, symbolic regression that recovers closed-form flux expressions~\cite{ConsLawNet2023}, numerical flux networks that operate at the cell-interface level~\cite{CFN2024}, or trained neural networks that approximate the Riemann solver itself~\cite{MagieraRiemann2020,RuggeriRiemann2022,NogueiraRiemann2025,HCNRS2026}.
The structural stability theorems of this paper specify how close a learned flux must be to the true one for the wave structure of the Riemann solution to persist.
If the learned flux approximation is \(C^2\)-close to the true flux and the system satisfies the three standing assumptions, then the wave structure of the Riemann solution is guaranteed to be preserved for almost every left and right state.

\subsection{Outline}

The paper is organized as follows.
Section~\ref{sec:geometric} develops the geometric theory for wave curves, reviewing the Rankine--Hugoniot map, Hugoniot loci, rarefaction curves, and the construction of wave curves for the Riemann problem.
Section~\ref{sec:transversality} introduces the concept of sequential transversality, beginning with the \(2\times 2\) case, building intuition through the \(3\times 3\) case, and culminating in the general \(n\times n\) framework.
Section~\ref{sec:structural-stability} states and proves the three main theorems on structural stability, genericity, and generic structural stability.
Section~\ref{sec:applications} applies the theorems to the \(p\)-system, polydisperse particle-laden flows, and machine-learned flux approximations.
Section~\ref{sec:conclusion} summarizes the contribution and lists open directions.
The appendices collect relevant background material and proofs of auxiliary lemmas.

\section{Geometric Theory for Wave Curves}
\label{sec:geometric}

\subsection{The Rankine--Hugoniot map and the Hugoniot loci}
\label{ssec:hugoniot}

Throughout this section, $n \ge 2$ is a fixed integer and $u_0 \in U$ is a fixed base state.

A piecewise-smooth solution with a jump discontinuity propagating at
speed \(s\) must satisfy the \emph{Rankine--Hugoniot condition}
\begin{equation}\label{eq:rankine-hugoniot}
  F(u) - F(u_0) = s\,(u - u_0),
\end{equation}
defining candidate right states $u$ from the given left state $u_0$ for an admissible shock connection.

\begin{definition}[Rankine--Hugoniot map]\label{def:RH-map}
Define the Rankine--Hugoniot map based at \(u_0\) by
\begin{equation}\label{eq:RH-map}
  \begin{aligned}
    H_{u_0}\colon (U\setminus\{u_0\})\times\mathbb{R} &\to\mathbb{R}^n,\\
    H_{u_0}(u,s) &:= F(u) - F(u_0) - s\,(u - u_0).
  \end{aligned}
\end{equation}
When we wish to emphasize the base state explicitly, we also write
\(H(u,s;u_0) := H_{u_0}(u,s)\).
Throughout this section, the flux \(F\) is fixed and excluded from the notation for simplicity.
Its Jacobian with respect to \((u,s)\in\mathbb{R}^{n+1}\) is the
\(n\times(n{+}1)\) matrix
\begin{equation}\label{eq:jacobian}
\begin{aligned}
  D_{(u,s)}H_{u_0}(u,s)
  &= \bigl[\,D_u H_{u_0}(u,s) \;\bigm|\; D_s H_{u_0}(u,s)\,\bigr] \\
  &= \bigl[\,DF(u) - s\,I_n \;\bigm|\; -(u - u_0)\,\bigr].
\end{aligned}
\end{equation}
Here \(D_uH\in\mathbb{R}^{n\times n}\) and \(D_sH\in\mathbb{R}^{n\times 1}\); the first \(n\)
columns come from differentiating in \(u\), and the last column from differentiating in \(s\), with $I_n$ as the $n \times n$ identity matrix.
\end{definition}

\begin{definition}[Lifted Hugoniot locus]\label{def:lifted}
The \emph{lifted Hugoniot locus} emanating from \(u_0\) is given by
\[
  \widehat{\mathcal{H}}_{u_0}
  := \bigl\{(u,s)\in(U\setminus\{u_0\})\times\mathbb{R} : H_{u_0}(u,s)=0\bigr\}
  = H_{u_0}^{-1}(0).
\]
\end{definition}

\begin{definition}[Projection and projected Hugoniot locus]\label{def:projected}
Let \(\pi\colon(U\setminus\{u_0\})\times\mathbb{R}\to
U\setminus\{u_0\}\) denote the canonical projection
\(\pi(u,s)=u\).  The \emph{projected Hugoniot locus} is given by
\(
  \mathcal{H}_{u_0} := \pi\bigl(\widehat{\mathcal{H}}_{u_0}\bigr)
  \subset U\setminus\{u_0\}.
\)
\end{definition}

\begin{assumption}[Regular manifold hypothesis]\label{ass:regular}
For every \(u_0\in U\), the origin \(0\in\mathbb{R}^n\) is a regular value of
\(H_{u_0}\colon(U\setminus\{u_0\})\times\mathbb{R}\to\mathbb{R}^n\).
Equivalently, \(H_{u_0}\) is a submersion at every point of \(\widehat{\mathcal{H}}_{u_0}\), that is, the Jacobian \(D_{(u,s)}H_{u_0}\) has rank~\(n\) there.
\end{assumption}

See the prior work \cite{TanBertozzi2x2} for motivation and
geometric intuition behind this hypothesis, including why the base
state \(u_0\) must be excluded from the domain.

\begin{proposition}[Lifted locus is a 1D embedded submanifold]\label{prop:lifted-mfld}
Under Assumption~\ref{ass:regular}, the lifted Hugoniot locus
\(\widehat{\mathcal{H}}_{u_0}=H_{u_0}^{-1}(0)\) is a \(C^1\) embedded submanifold of
\((U\setminus\{u_0\})\times\mathbb{R}\) of dimension
\(
  (n+1) - n = 1.
\)
At each point \((u,s)\in\widehat{\mathcal{H}}_{u_0}\), the tangent space is given by
\(
  T_{(u,s)}\widehat{\mathcal{H}}_{u_0} = \ker\bigl(D_{(u,s)}H_{u_0}(u,s)\bigr).
\)
\end{proposition}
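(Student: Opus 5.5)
This is a direct application of the regular value (preimage) theorem in the $C^1$ category, followed by identifying the tangent space.

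First I would check regularity and the domain. Since $F\in C^2(U;\mathbb{R}^n)$, the map $H_{u_0}(u,s)=F(u)-F(u_0)-s(u-u_0)$ is $C^2$, in particular $C^1$. Its domain $(U\setminus\{u_0\})\times\mathbb{R}$ is an open subset of $\mathbb{R}^{n+1}$, because $U$ is open and $\{u_0\}$ is closed. Its Jacobian is given by \eqref{eq:jacobian}.

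Next, fix $p=(u^*,s^*)\in\widehat{\mathcal{H}}_{u_0}$. By Assumption~\ref{ass:regular}, the $n\times(n+1)$ matrix $D_{(u,s)}H_{u_0}(p)$ has rank $n$, so some $n$ of its $n+1$ columns form an invertible $n\times n$ block. After permuting coordinates, split $\mathbb{R}^{n+1}=\mathbb{R}^n\times\mathbb{R}$ as $(y,t)$ so that $D_yH_{u_0}(p)$ is invertible. The finite-dimensional $C^1$ implicit function theorem then gives neighborhoods $V\ni t^*$ and $W\ni p$ and a $C^1$ map $g\colon V\to\mathbb{R}^n$ with
\[
\widehat{\mathcal{H}}_{u_0}\cap W=\{(g(t),t):t\in V\}.
\]
Hence $\widehat{\mathcal{H}}_{u_0}$ is locally the graph of a $C^1$ function of one variable. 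Equivalently, the map $\Phi(y,t)=(H_{u_0}(y,t),t)$ is a local $C^1$ diffeomorphism, since its Jacobian is block triangular with invertible diagonal blocks. This $\Phi$ is a slice chart straightening $\widehat{\mathcal{H}}_{u_0}$ to $\{0\}\times\mathbb{R}$. Therefore $\widehat{\mathcal{H}}_{u_0}$ is a $C^1$ embedded submanifold of dimension $(n+1)-n=1$, with the subspace topology, because the chart is a restriction of an ambient diffeomorphism.

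For the tangent space, take any $C^1$ curve $\gamma$ in $\widehat{\mathcal{H}}_{u_0}$ through $p$. Differentiating $H_{u_0}(\gamma(\tau))\equiv0$ gives $D_{(u,s)}H_{u_0}(p)\,\gamma'(0)=0$, so $T_p\widehat{\mathcal{H}}_{u_0}\subseteq\ker D_{(u,s)}H_{u_0}(p)$. By rank--nullity the kernel has dimension $(n+1)-n=1$, which equals $\dim T_p\widehat{\mathcal{H}}_{u_0}$, so the inclusion is an equality.

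No step is a genuine obstacle. The only care points are that $C^2$ flux yields at least $C^1$ regularity, which suffices for the implicit function theorem, and that excluding $u_0$ keeps the domain open, so the theorem applies without boundary issues.
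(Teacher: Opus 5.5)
Your proposal is correct and follows the paper's route. The paper's proof is a one-line application of the Regular Value Theorem (Theorem~\ref{thm:rvt}) to $G=H_{u_0}$, $y=0$, $N=n+1$, $k=n$; your implicit-function and slice-chart argument, together with the curve-plus-rank--nullity identification of $T_{(u,s)}\widehat{\mathcal{H}}_{u_0}=\ker D_{(u,s)}H_{u_0}$, just unpacks the proof of that theorem in this instance, so citing it directly would suffice.
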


\begin{proof}
This is a direct application of the Regular Value Theorem
(Theorem~\ref{thm:rvt}) to \(G=H_{u_0}\), \(y=0\), with \(N=n+1\) and
\(k=n\).
\end{proof}

Note here that the lifted locus \(\widehat{\mathcal{H}}_{u_0}\) is a one-parameter family of
pairs \((u,s)\) satisfying the Rankine--Hugoniot jump conditions based
at \(u_0\). Next, we continue with one of the key geometric relations between the lifted and the projected Hugoniot loci.

\begin{proposition}[Equivalence Principle for Hugoniot Loci]\label{prop:equv-hugoniot}
Assume the regular manifold hypothesis
(Assumption~\ref{ass:regular}) holds. Then
\[
  \pi\big|_{\widehat{\mathcal{H}}_{u_0}} \colon
  \widehat{\mathcal{H}}_{u_0}\to U\setminus\{u_0\}
\]
is a \(C^1\) embedding. In particular, its image
\(\mathcal{H}_{u_0}=\pi(\widehat{\mathcal{H}}_{u_0})\) is a one-dimensional
\(C^1\) embedded submanifold of \(U\setminus\{u_0\}\).
\end{proposition}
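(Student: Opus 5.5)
Showing that $\pi|_{\widehat{\mathcal{H}}_{u_0}}$ is a $C^1$ embedding means proving three things: it is injective, it is an immersion, and it is a homeomorphism onto its image with the subspace topology. All three rest on one algebraic fact. On $\widehat{\mathcal{H}}_{u_0}$ the speed $s$ is determined by $u$, and it depends continuously on $u$. Indeed, if $(u,s)\in\widehat{\mathcal{H}}_{u_0}$, then $u\neq u_0$ and $F(u)-F(u_0)=s(u-u_0)$. Pairing with $u-u_0$ gives
\[
  s=\sigma(u):=\frac{(u-u_0)\cdot\bigl(F(u)-F(u_0)\bigr)}{|u-u_0|^2},
\]
which is a continuous (indeed $C^2$) function on $U\setminus\{u_0\}$.

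Injectivity follows immediately: $(u,s)$ and $(u,s')$ in the locus force $s=s'=\sigma(u)$.

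For the homeomorphism property, the inverse of $\pi|_{\widehat{\mathcal{H}}_{u_0}}$ on $\mathcal{H}_{u_0}$ is $u\mapsto(u,\sigma(u))$. This map is the restriction of a continuous map on $U\setminus\{u_0\}$, so it is continuous for the subspace topology. Hence $\pi|_{\widehat{\mathcal{H}}_{u_0}}$ is a topological embedding. The point is that $\sigma$ is defined on all of $U\setminus\{u_0\}$, which rules out the usual pathology of an injective immersion that fails to be an embedding, such as a figure-eight image.

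The immersion property is the step I expect to need Assumption~\ref{ass:regular} and a small computation. By Proposition~\ref{prop:lifted-mfld}, a tangent vector $(v,\tau)$ at $(u,s)$ lies in $\ker D_{(u,s)}H_{u_0}$, which by \eqref{eq:jacobian} reads $(DF(u)-sI)v=\tau(u-u_0)$. Its image under $D\pi$ is $v$. So I must show that $v=0$ forces $\tau=0$. If $v=0$, then $\tau(u-u_0)=0$, and since $u\neq u_0$ this gives $\tau=0$. Thus $D\pi$ is injective on the one-dimensional tangent space, and $\pi|_{\widehat{\mathcal{H}}_{u_0}}$ is a $C^1$ immersion. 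Here the exclusion of the base state $u_0$ is used essentially.

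Combining injectivity, the immersion property and the topological embedding property shows that $\pi|_{\widehat{\mathcal{H}}_{u_0}}$ is a $C^1$ embedding. The image of a $C^1$ embedding of a one-dimensional manifold is then a one-dimensional $C^1$ embedded submanifold of $U\setminus\{u_0\}$, by the standard characterization that embedded submanifolds are exactly images of embeddings. The one point to state carefully is the topological part: one should note explicitly that the continuous global left inverse $u\mapsto(u,\sigma(u))$ makes the immersion an embedding without any properness argument.
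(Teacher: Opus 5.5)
Your proof is correct and follows the paper's route: injectivity, the observation that $D\pi$ kills only the vertical direction because $u\neq u_0$ (your kernel computation is essentially Lemma~\ref{lem:immersion}), and an explicit inverse $u\mapsto(u,\sigma(u))$, followed by Theorem~\ref{thm:embedding-image}. The only difference is that your single quotient $\sigma(u)=(u-u_0)\cdot(F(u)-F(u_0))/|u-u_0|^2$ is defined and $C^2$ on all of $U\setminus\{u_0\}$, whereas the paper uses the coordinate formula $\sigma(u)=(F_j(u)-F_j(u_0))/(u-u_0)_j$ on neighborhoods where $(u-u_0)_j\neq 0$ and glues these pieces by uniqueness of $\sigma$; your formula makes that gluing step unnecessary.
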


The proof splits into three steps.

\begin{lemma}[Injectivity of the projection]\label{lem:injectivity}
For each \(u\in U\setminus\{u_0\}\), there is at most one \(s\) such
that \((u,s)\in\widehat{\mathcal{H}}_{u_0}\).  Hence
\(\pi|_{\widehat{\mathcal{H}}_{u_0}}\) is injective.
\end{lemma}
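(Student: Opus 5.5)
The argument is elementary and uses only the Rankine--Hugoniot relation~\eqref{eq:rankine-hugoniot} together with the exclusion of the base state from the domain of $H_{u_0}$. Fix $u\in U\setminus\{u_0\}$ and suppose $(u,s_1)$ and $(u,s_2)$ both lie in $\widehat{\mathcal{H}}_{u_0}=H_{u_0}^{-1}(0)$. Then
\[
F(u)-F(u_0)=s_1\,(u-u_0)\quad\text{and}\quad F(u)-F(u_0)=s_2\,(u-u_0).
\]
Subtracting the two identities gives $(s_1-s_2)(u-u_0)=0$ in $\mathbb{R}^n$. Since $u\neq u_0$, the vector $u-u_0$ is nonzero, so some component $(u-u_0)_i$ is nonzero. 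Reading off that component forces $s_1=s_2$.

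An equivalent closed form is available: dotting the Rankine--Hugoniot relation with $u-u_0$ yields
\[
s=\frac{(u-u_0)\cdot\bigl(F(u)-F(u_0)\bigr)}{|u-u_0|^2}.
\]
So the speed is uniquely determined by $u$ whenever $(u,s)$ lies on the lifted locus. This formula also exhibits $s$ as a continuous (indeed $C^2$) function of $u$ on $U\setminus\{u_0\}$, which will be useful for the later steps of Proposition~\ref{prop:equv-hugoniot} showing that $\pi|_{\widehat{\mathcal{H}}_{u_0}}$ is a homeomorphism onto its image.

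Injectivity of $\pi|_{\widehat{\mathcal{H}}_{u_0}}$ then follows immediately. If $\pi(u,s)=\pi(u',s')$, then $u=u'$, and the uniqueness just shown gives $s=s'$.

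There is no real obstacle here. The only point requiring care is that the argument genuinely uses $u\neq u_0$, which is exactly why the base state is removed from the domain in Definition~\ref{def:RH-map}. At $u=u_0$ every $s$ satisfies the relation, so injectivity would fail there.
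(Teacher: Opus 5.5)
Your argument is correct and is the same as the paper's: subtracting the two Rankine--Hugoniot identities gives $(s_1-s_2)(u-u_0)=0$, and $u\neq u_0$ forces $s_1=s_2$. Your closed form $s=(u-u_0)\cdot\bigl(F(u)-F(u_0)\bigr)/|u-u_0|^2$ is also valid. It is a global, index-free version of the local component formula $\sigma(u)=\bigl(F_j(u)-F_j(u_0)\bigr)/(u-u_0)_j$ that the paper uses later, in the appendix proof of Proposition~\ref{prop:equv-hugoniot}.
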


\begin{proof}
Suppose \((u,s_1)\) and \((u,s_2)\) both lie in
\(\widehat{\mathcal{H}}_{u_0}\).  Then \(H_{u_0}(u,s_1)=0=H_{u_0}(u,s_2)\) implying $
  0 = H_{u_0}(u,s_2) - H_{u_0}(u,s_1) = -(s_2 - s_1)(u - u_0).
$
Since \(u\neq u_0\), the vector \(u-u_0\in\mathbb{R}^n\) is nonzero,
so \(s_2-s_1=0\).
\end{proof}

\begin{lemma}[Projection is an immersion]\label{lem:immersion}
The restriction \(\pi|_{\widehat{\mathcal{H}}_{u_0}}\) is an immersion, i.e.\
its derivative is injective at every point of
\(\widehat{\mathcal{H}}_{u_0}\).
\end{lemma}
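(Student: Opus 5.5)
The plan is to compute the differential of the projection on a tangent vector to the lifted locus, and show that a vertical tangent vector is forced to vanish. By Proposition~\ref{prop:lifted-mfld}, at each point \((u,s)\in\widehat{\mathcal{H}}_{u_0}\) the tangent space is the one-dimensional kernel \(\ker D_{(u,s)}H_{u_0}(u,s)\). The differential of \(\pi|_{\widehat{\mathcal{H}}_{u_0}}\) at \((u,s)\) is the restriction of the linear map \(D\pi(v,\sigma)=v\) to this tangent space. So it suffices to show that no nonzero tangent vector \((v,\sigma)\) has \(v=0\).

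Suppose \((v,\sigma)\in T_{(u,s)}\widehat{\mathcal{H}}_{u_0}\) with \(v=0\). The Jacobian formula~\eqref{eq:jacobian} gives
\[
0 = D_{(u,s)}H_{u_0}(u,s)\begin{pmatrix} 0\\ \sigma\end{pmatrix} = (DF(u)-sI_n)\,0 - \sigma\,(u-u_0) = -\sigma\,(u-u_0).
\]
Since \(u\in U\setminus\{u_0\}\), the vector \(u-u_0\) is nonzero, so \(\sigma=0\). Hence the kernel of \(D(\pi|_{\widehat{\mathcal{H}}_{u_0}})\) at every point is trivial, and the restriction is an immersion. This is the infinitesimal analogue of Lemma~\ref{lem:injectivity}, and it uses only the exclusion of the base state \(u_0\).

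There is no genuine obstacle; the argument never uses the rank condition beyond identifying the tangent space. The one point to state carefully is this identification. The derivative of the restricted map is the ambient derivative \(D\pi\) applied to tangent vectors of the embedded submanifold. Those tangent vectors equal \(\ker D_{(u,s)}H_{u_0}\) precisely because of the regular value property in Proposition~\ref{prop:lifted-mfld}, and this is where Assumption~\ref{ass:regular} enters.
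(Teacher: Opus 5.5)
Your proposal is correct and follows essentially the same route as the paper. The paper identifies $T_{(u,s)}\widehat{\mathcal{H}}_{u_0}=\ker D_{(u,s)}H_{u_0}(u,s)$ via the Regular Value Theorem, notes that $\ker D\pi_{(u,s)}=\spn\{e_s\}$, and shows that $e_s$ is not tangent because $D_{(u,s)}H_{u_0}\,e_s=-(u-u_0)\neq 0$; this is exactly your computation with a general vertical vector $(0,\sigma)$.
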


\begin{proof}
Since \(\widehat{\mathcal{H}}_{u_0}\subset (U\setminus\{u_0\})\times\mathbb{R}\subset\mathbb{R}^{n+1}\)
is an embedded submanifold, we view
\(T_{(u,s)}\widehat{\mathcal{H}}_{u_0}\) as a subspace of \(\mathbb{R}^{n+1}\)
via the canonical inclusion map. Let \(e_s:=(0,\dots,0,1)\in\mathbb{R}^{n+1}\) denote the unit vector
in the \(s\)-direction.  From the Jacobian~\eqref{eq:jacobian},
\[
  D_{(u,s)}H_{u_0}(u,s)\;e_s
  = \frac{\p H_{u_0}}{\p s}(u,s)
  = -(u - u_0).
\]
Recall that \(\widehat{\mathcal{H}}_{u_0}=H_{u_0}^{-1}(0)\) is the zero level set
of the Rankine--Hugoniot map.  By the Regular Value Theorem
(Theorem~\ref{thm:rvt}),
\(T_{(u,s)}\widehat{\mathcal{H}}_{u_0}=\ker\bigl(D_{(u,s)}H_{u_0}(u,s)\bigr)\)
for every \((u,s)\in\widehat{\mathcal{H}}_{u_0}\).
Since \(u\neq u_0\) on the domain, \(D_{(u,s)}H_{u_0}(u,s)e_s=-(u-u_0)\neq 0\), hence
\begin{equation}\label{eq:es-not-tangent}
  e_s \notin T_{(u,s)}\widehat{\mathcal{H}}_{u_0} = \ker\bigl(D_{(u,s)}H_{u_0}(u,s)\bigr).
\end{equation}

Next, the derivative of \(\pi(u,s)=u\) is given by
\[
  D\pi_{(u,s)}\colon\mathbb{R}^{n+1}\to\mathbb{R}^n,\qquad
  D\pi_{(u,s)}(\delta u,\delta s) = \delta u,
\]
so \(\ker(D\pi_{(u,s)}) = \spn\{e_s\}\).  Hence, by~\eqref{eq:es-not-tangent},
\[
  \ker(D\pi_{(u,s)}) \cap T_{(u,s)}\widehat{\mathcal{H}}_{u_0}
  = \spn\{e_s\} \cap T_{(u,s)}\widehat{\mathcal{H}}_{u_0}
  = \{0\}.
\]
Equivalently,
\(\ker\bigl(D\pi_{(u,s)}|_{T_{(u,s)}\widehat{\mathcal{H}}_{u_0}}\bigr)=\{0\}\),
so \(D\pi_{(u,s)}|_{T_{(u,s)}\widehat{\mathcal{H}}_{u_0}}\) is injective.
\end{proof}

Lemmas~\ref{lem:injectivity} and~\ref{lem:immersion} make \(\pi|_{\widehat{\mathcal{H}}_{u_0}}\) an injective immersion, hence a \emph{local} \(C^1\) embedding by Proposition~\ref{cor:immersion-local}. The Rankine--Hugoniot condition then yields an explicit \(C^1\) inverse \(\sigma\colon\mathcal{H}_{u_0}\to\mathbb{R}\) recovering the shock speed from the state, so Theorem~\ref{thm:embedding-image} promotes \(\mathcal{H}_{u_0}\) to a one-dimensional \(C^1\) embedded submanifold, with dimension inherited from Proposition~\ref{prop:lifted-mfld}. See Appendix~\ref{app:equiv-hugoniot-proof} for the full proof.


\begin{remark}[Removing \(u_0\)]\label{rem:remove-u0}
The partial derivative \(\p_s H_{u_0}(u,s) = -(u-u_0)\) degenerates at
\(u=u_0\), and the lifted set typically has a singularity there because every speed satisfies the Rankine--Hugoniot condition at \(u_0\).  Removing \(u_0\) from the domain
guarantees that the ``vertical direction is not tangent'' argument of
Lemma~\ref{lem:immersion} holds.
Exactly $n$ branches of $\widehat{\mathcal{H}}_{u_0}$, one per characteristic family, nevertheless approach the base state, and each extends continuously to a finite limiting point there.

Along the $k$-th branch the projected tangent at $u_0$ is a right eigenvector $r_k(u_0)$, and Taylor-expanding the relation $F(u)-F(u_0)=s(u-u_0)$ gives $s\to\lambda_k(u_0)$ as $u\to u_0$, since $DF(u_0)\,r_k(u_0)=\lambda_k(u_0)\,r_k(u_0)$. The open circle at $(u_0,\lambda_k(u_0))$ in our figures (e.g., Figure~\ref{fig:lifted-shock}) marks this limiting point.
\end{remark}

{\sloppy
\begin{remark}[Dimension Independence]\label{rem:nxn}
The argument is independent of the number of conservation laws $n$.  The flux
\(F\colon U\subset\mathbb{R}^n\to\mathbb{R}^n\) may have any
\(n\ge 2\).  The dimension count \((n{+}1)-n=1\) for the lifted locus,
the injectivity argument (nonzero \(u-u_0\) implies unique \(s\)),
and the immersion argument (vertical direction not tangent) are all
independent of~\(n\).
\end{remark}}

\begin{remark}[Equivalence Principle]
Proposition~\ref{prop:equv-hugoniot} is the technical bridge between the
``lifted'' viewpoint, where shock connections are naturally described as
the zero set \(H_{u_0}^{-1}(0)\subset (U\setminus\{u_0\})\times\mathbb{R}\), and the
traditional ``projected'' viewpoint in state space
\(\mathcal{H}_{u_0}\subset U\setminus\{u_0\}\).

Concretely, since \(\pi(u,s)=u\), we have
\(
  \pi(u_1,\dots,u_n,s)=(u_1,\dots,u_n)
\), and therefore the Jacobian matrix is given by
\[
  D\pi_{(u,s)} = \begin{bmatrix} I_n & 0 \end{bmatrix}
  \colon \mathbb{R}^{n+1}\to\mathbb{R}^n.
\]
In particular, for a tangent vector \((v,\dot{s})\in\mathbb{R}^n\times\mathbb{R}\cong T_{(u,s)}\bigl((U\setminus\{u_0\})\times\mathbb{R}\bigr)\),
\[
  D\pi_{(u,s)}(v,\dot{s}) = \begin{bmatrix} I_n & 0 \end{bmatrix}
  \binom{v}{\dot{s}} = v.
\]
Lemma~\ref{lem:immersion} implies that this loss of the
\(s\)-component does not collapse any tangent direction along
\(\widehat{\mathcal{H}}_{u_0}\), so \(D\pi\) restricts to an isomorphism on
tangent spaces. In other words, for every \((u,s)\in\widehat{\mathcal{H}}_{u_0}\),$ D\pi\big|_{T_{(u,s)}\widehat{\mathcal{H}}_{u_0}}\colon T_{(u,s)}\widehat{\mathcal{H}}_{u_0} \to T_u\mathcal{H}_{u_0}$
identifies tangent directions on the lifted locus with tangent directions on the projected locus (see Figure~\ref{fig:lifted-shock}).
\end{remark}
\begin{figure}[htbp]
  \centering
  \begin{tikzpicture}[>=Stealth, scale=1.0,
      every node/.style={font=\small}]
    \draw[thick] (0,0) ellipse (3.8cm and 1.6cm);
    \node[anchor=west] at (3.9,0) {$U\setminus\{u_0\}$};

    \coordinate (u0) at (-1.8,-0.7);
    \draw[fill=white, thick] (u0) circle (2.5pt);
    \node[below left] at (u0) {$u_0$};

    \draw[thick] (-3.3,0.1) .. controls (-2.8,-0.1) and (-2.4,-0.4) .. (u0)
                             .. controls (-1.2,-1.0) and (0.0,-0.1) .. (0.6,0.05)
                             .. controls (1.6,0.2) and (2.6,0.3) .. (3.2,0.5);
    \node[font=\small, rotate=20, anchor=south] at (-0.5,-0.25)
      {$\mathcal{H}_{u_0}=\pi(\widehat{\mathcal{H}}_{u_0})$};

    \coordinate (proj) at (0.6,0.05);
    \draw[thick] (proj) ++(-2.5pt,-2.5pt) -- ++(5pt,5pt)
                 (proj) ++(-2.5pt,2.5pt) -- ++(5pt,-5pt);

    \coordinate (proj-vend) at ($(proj)+(1.2,0.0)$);
    \draw[->, orange!80!red, very thick] (proj) -- (proj-vend)
      node[below, midway, orange!80!red] {$v$};

    \draw[dotted, thin] (-4.0,3.6) -- (-1.8,3.6);
    \node[font=\scriptsize, anchor=east] at (-4.0,3.6) {$\lambda_k(u_0)$};

    \draw[->, thick] (-4.0,4.0) -- (-4.0,4.6) node[above] {$s$};

    \coordinate (lifted) at (0.6,3.2);
    \coordinate (lifted-u0) at (-1.8,3.6);
    \draw[thick] (-3.3,2.9) .. controls (-2.9,2.95) and (-2.5,3.4) .. (lifted-u0)
                             .. controls (-1.1,3.7) and (-0.5,3.5) .. (lifted)
                             .. controls (1.0,3.07) and (2.2,2.8) .. (3.6,2.6);
    \node[anchor=south, font=\small] at (-0.5,3.7) {$\widehat{\mathcal{H}}_{u_0}$};

    \draw[fill=white, thick] (lifted-u0) circle (2.5pt);

    \draw[thick] (lifted) ++(-2.5pt,-2.5pt) -- ++(5pt,5pt)
                 (lifted) ++(-2.5pt,2.5pt) -- ++(5pt,-5pt);

    \draw[dashed, thick] (lifted) -- (proj);

    \coordinate (tangent-end) at ($(lifted)+(1.2,-0.4)$);
    \coordinate (vend) at ($(lifted)+(1.2,0)$);

    \draw[->, red, very thick] (lifted) -- (tangent-end);
    \node[below right, red] at (tangent-end)
      {$(\textcolor{orange!80!red}{v},\textcolor{blue}{\dot{s}})$};

    \draw[->, orange!80!red, thick] (lifted) -- (vend)
      node[above, midway, orange!80!red] {$v$};

    \draw[->, blue, thick] (vend) -- (tangent-end)
      node[pos=0.5, right, blue] {$\dot{s}$};

    \draw[orange!80!red, dashed, thick] (vend) -- (proj-vend);

    \draw[->, thick] (-4.5,3.2) to[bend right=20] node[left] {$\pi$} (-4.5,1.2);

    \draw[->, thick] (4.8,1.2) to[bend right=20] node[right] {$\pi^{-1}\big|_{\mathcal{H}_{u_0}}$} (4.8,3.2);
  \end{tikzpicture}
  \caption{Diagram illustrating the relationship between the lifted and the projected Hugoniot loci, and how the projection map maps the tangent vectors accordingly.}
  \label{fig:lifted-shock}
\end{figure}
This paradigm extends to rarefaction curves in the
next subsection. There, the lifted object again lives in \(U\times\mathbb{R}\), but the auxiliary coordinate
is the characteristic speed \(\xi=\lambda_k(u)\) (rather than the shock speed \(s\)).

\subsection{Rarefaction curves}
\label{ssec:rarefaction}

\subsubsection{Flow construction and \(\xi\)-parameterization}

Fix a characteristic family \(k\in\{1,\dots,n\}\), a compact subset \(K\subset U\), and a base state \(u_0\in \operatorname{int}(K)\). In this subsection, the rarefaction curve is continued maximally while it remains in \(\operatorname{int}(K)\) (equivalently, up to first contact with \(\partial K\)).
Following the standard derivation~\cite{LeVeque,LeFloch}, a centered $k$-rarefaction fan is a nonconstant self-similar solution $u(x,t)=w(\xi)$, in the similarity variable $\xi=x/t$, so that \eqref{eq:conservation-law} gives
\[
  \bigl(DF(w(\xi))-\xi I_n\bigr)\,w'(\xi)=0.
\]
Wherever $w'(\xi)\neq 0$, the tangent $w'(\xi)$ is a right eigenvector of $DF(w(\xi))$ with eigenvalue $\xi$, so for the $k$-th family $\lambda_k(w(\xi))=\xi$ and $w'(\xi)=\alpha(\xi)\,r_k(w(\xi))$ for some scalar $\alpha(\xi)$. Differentiating $\lambda_k(w(\xi))=\xi$ gives $\nabla\lambda_k\cdot w'=1$, and substituting $w'=\alpha\,r_k$ determines the scalar, $\alpha=1/(\nabla\lambda_k\cdot r_k)$, well defined since $\nabla\lambda_k\cdot r_k\neq 0$ by genuine nonlinearity (Assumption~\ref{ass:GN}). Denoting the fan profile through the base state $u_0$ by $\Gamma_k(\cdot\,;u_0)$, the two relations become
\begin{equation}\label{eq:Gamma-lambda-param}
  \lambda_k\bigl(\Gamma_k(\xi;u_0)\bigr)=\xi
\end{equation}
and
\begin{equation}\label{eq:Gamma-xi-ODE}
  \Gamma_k'(\xi;u_0)
  =
  \frac{r_k(\Gamma_k(\xi;u_0))}
  {\nabla\lambda_k(\Gamma_k(\xi;u_0))\cdot r_k(\Gamma_k(\xi;u_0))},
\end{equation}
whose right-hand side is independent of the choice of representative $r_k$, since rescaling $r_k\mapsto c(u)\,r_k$ leaves the quotient unchanged.

At $F\in C^2$ the gradient $\nabla\lambda_k$ is only continuous, so the right-hand side of \eqref{eq:Gamma-xi-ODE} is merely $C^0$ in the state; Peano's theorem gives existence, but neither uniqueness nor $C^1$ dependence. For the same reason, the normalization $\nabla\lambda_k\cdot r=1$ need not define a $C^1$ eigenvector field, since it depends on $u$ through the $C^0$ coefficient $\nabla\lambda_k(u)$. We therefore proceed in two steps.
The implicit function theorem with the affine normalization on the $i$-th coordinate $r_i=1$, which involves no derivative of the eigenvalue, provides local $C^1$ representatives; these are integrated in their own flow time $t$, and the flow is reparameterized by $\xi=\lambda_k$ afterward, restoring the canonical normalization $\nabla\lambda_k\cdot\Gamma_k'=1$ exactly.

Restricting to \(K\), the Jacobian \(DF|_K\in C^1(K)\) since \(F\in C^2(U)\), and \(\lambda_k\) is simple at every point of \(K\) by Assumption~\ref{ass:SH}. The canonical object is the \(k\)-th eigenline
\[
  E_k(u):=\ker(DF(u)-\lambda_k(u)I_n),
\]
not a globally fixed normalization of a right eigenvector. If $r$ spans $E_k(u)$, then so does $\alpha r$ for every $\alpha\neq 0$, so passing to the eigenline discards exactly the two arbitrary features of an eigenvector, its length ($\alpha\neq 1$) and its sign ($\alpha<0$). All the construction requires is a choice of representative $r_k(u)\in E_k(u)$ that varies in a $C^1$ way as $u$ varies locally, that is, a nonvanishing $C^1$ map $r_k\colon\Omega\to\mathbb{R}^n\setminus\{0\}$ with $r_k(u)\in E_k(u)$ on an open neighborhood $\Omega$, called a \emph{local $C^1$ section} of the \emph{$k$-th eigenline bundle} $u\mapsto E_k(u)$; Appendix~\ref{app:eigenline-geometry} develops this language and the geometric picture behind it. The lemma below gives the construction. It is stated for a general matrix field $\mathcal{A}$; in this paper $\mathcal{A}=DF$ is the Jacobian of the flux.
\begin{lemma}[Local eigencharts for a simple eigenline]\label{lem:C1-evec-IFT}
Let \(V\subset\mathbb{R}^n\) be open and let \(\mathcal{A}\in C^1(V;\mathbb{R}^{n\times n})\). Fix \(u_0\in V\), and suppose \(\lambda_0\) is a simple eigenvalue of \(\mathcal{A}(u_0)\) with right eigenvector \(r_0\) satisfying \(\mathcal{A}(u_0)r_0=\lambda_0 r_0\). Choose an index \(i\) such that \((r_0)_i\neq 0\). Then there is an open neighborhood \(\Omega\subset V\) of \(u_0\) and there are \(C^1\) maps \(u\mapsto \lambda(u)\in\mathbb{R}\) and \(u\mapsto r(u)\in\mathbb{R}^n\) such that for all \(u\in\Omega\),
\[
  \mathcal{A}(u)r(u)=\lambda(u)r(u),\qquad r_i(u)=1,
\]
and \(\lambda(u_0)=\lambda_0\), \(r(u_0)=r_0/(r_0)_i\). After shrinking \(\Omega\), \(\lambda(u)\) is the unique simple eigenvalue of \(\mathcal{A}(u)\) near \(\lambda_0\). In the setting of this paper, with \(\mathcal{A}=DF\) on \(V=\operatorname{int}(K)\), \(\lambda_0=\lambda_k(u_0)\), and Assumption~\ref{ass:SH}, this gives \(\lambda(u)=\lambda_k(u)\), so \(r(u)\) is a local \(C^1\) representative of the eigenline \(E_k(u)\); on overlaps, any two such representatives differ by multiplication by a nonvanishing \(C^1\) scalar.
\end{lemma}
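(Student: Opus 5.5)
We apply the finite-dimensional implicit function theorem to the eigen-equation together with the affine normalization. Define
\[
  \Phi\colon V\times\mathbb{R}^n\times\mathbb{R}\to\mathbb{R}^n\times\mathbb{R},\qquad
  \Phi(u,r,\lambda):=\bigl(\mathcal{A}(u)r-\lambda r,\; r_i-1\bigr).
\]
Since \(\mathcal{A}\in C^1\) and \(\Phi\) is polynomial in \((r,\lambda)\), \(\Phi\) is \(C^1\). Set \(\hat r_0:=r_0/(r_0)_i\). Then \(\Phi(u_0,\hat r_0,\lambda_0)=0\). The partial Jacobian in \((r,\lambda)\) at this point is the \((n+1)\times(n+1)\) matrix
\[
  M=\begin{bmatrix} \mathcal{A}(u_0)-\lambda_0 I_n & -\hat r_0\\ e_i^{\top} & 0\end{bmatrix}.
\]

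The key step is to show that \(M\) is invertible, using simplicity of \(\lambda_0\). Suppose \(M(w,\mu)=0\). Then \((\mathcal{A}(u_0)-\lambda_0)w=\mu\hat r_0\) and \(w_i=0\). Let \(\ell_0\) be a left eigenvector for \(\lambda_0\). Because \(\lambda_0\) is algebraically simple, \(\ell_0\hat r_0\neq 0\). Applying \(\ell_0\) to the first equation gives \(0=\mu\,\ell_0\hat r_0\), so \(\mu=0\). Then \(w\in\ker(\mathcal{A}(u_0)-\lambda_0)=\spn\{\hat r_0\}\), say \(w=c\hat r_0\). The condition \(w_i=0\) forces \(c=0\), since \((\hat r_0)_i=1\). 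Hence \(M\) is injective, and therefore invertible.

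The implicit function theorem now gives a neighborhood \(\Omega\) of \(u_0\) and \(C^1\) maps \(\lambda(u)\), \(r(u)\) with \(\Phi(u,r(u),\lambda(u))=0\), \(\lambda(u_0)=\lambda_0\) and \(r(u_0)=\hat r_0\). The identities \(\mathcal{A}(u)r(u)=\lambda(u)r(u)\) and \(r_i(u)=1\) are exactly \(\Phi=0\).

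Next we check that \(\lambda(u)\) is the unique, simple eigenvalue near \(\lambda_0\). Roots of the characteristic polynomial depend continuously on \(u\), and \(\lambda_0\) is a simple root. So after shrinking \(\Omega\) there is \(\varepsilon>0\) such that \(\mathcal{A}(u)\) has exactly one eigenvalue, counted with multiplicity, in \((\lambda_0-\varepsilon,\lambda_0+\varepsilon)\). That eigenvalue is therefore simple. By continuity, \(\lambda(u)\) lies in this interval for \(u\) near \(u_0\), so it is that eigenvalue.

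In the paper's setting, Assumption~\ref{ass:SH} makes all eigenvalues \(\lambda_1(u)<\cdots<\lambda_n(u)\) distinct and continuous. Hence the eigenvalue of \(DF(u)\) near \(\lambda_k(u_0)\) is \(\lambda_k(u)\), which gives \(\lambda=\lambda_k\) on \(\Omega\). It follows that \(r(u)\in E_k(u)\), and \(r(u)\neq 0\) because \(r_i(u)=1\).

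For the overlap statement, let \(\tilde r\) be a second such \(C^1\) representative, normalized by \(\tilde r_j=1\). Since \(E_k(u)\) is one-dimensional, \(\tilde r=c(u)\,r\) with \(c(u)\neq 0\). Reading off the \(j\)-th coordinate gives \(c(u)=1/r_j(u)\), so \(r_j\neq0\) on the overlap. This shows \(c\) is \(C^1\). More generally, if \(\tilde r\) is any nonvanishing \(C^1\) section, we choose a coordinate \(m\) with \(r_m\neq0\) locally and write \(c=\tilde r_m/r_m\), which is again \(C^1\).

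The main obstacle is the invertibility of \(M\). This step needs algebraic simplicity, through the left-eigenvector pairing \(\ell_0\hat r_0\neq0\), and not merely geometric simplicity. Strict hyperbolicity supplies algebraic simplicity automatically.
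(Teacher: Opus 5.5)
Your proposal is correct and follows essentially the same route as the paper. Both proofs use the bordered map $(u,\lambda,r)\mapsto\bigl((\mathcal{A}(u)-\lambda I_n)r,\,r_i-1\bigr)$, show its $(\lambda,r)$-Jacobian is invertible via the nonvanishing left--right eigenvector pairing and the constraint $(\delta r)_i=0$, apply the implicit function theorem, use continuity of the spectrum for local uniqueness, and use strict hyperbolicity to identify $\lambda=\lambda_k$. The only difference is cosmetic and in the overlap step: you read off the other chart's normalized coordinate to get $c=1/r_j$, while the paper reads off its own to get $c_{\alpha\beta}=(r^\beta)_{i_\alpha}$, and both give a nonvanishing $C^1$ transition scalar.
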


The proof of \cref{lem:C1-evec-IFT} is deferred to Appendix~\ref{app:evec-IFT-proof}. We call a neighborhood \(\Omega\) produced by the lemma, equipped with its normalized \(C^1\) representative \(r_k\), an \emph{eigenchart}. The normalization \(r_i=1\) generalizes the graph condition from our prior \(2\times 2\) work~\cite{TanBertozzi2x2}, where a global graph condition on the flux let each rarefaction curve be written as a graph over one state coordinate in a fixed Cartesian chart and produced a \(C^1\) right-hand side. Choosing \(i\) with \((r_0)_i\neq 0\) achieves the same locally on each eigenchart, removing the need for any single-chart structural assumption. The normalization $\nabla\lambda_k\cdot r_k=1$ used for genuinely nonlinear fields is generally only $C^0$ at $F\in C^2$, as discussed before the lemma, while the unit normalization $|r_k|=1$ is locally $C^1$ only after a choice of sign, and a continuous global choice of sign need not exist. The coordinate normalization above, $r_i=1$ for a coordinate $i$ with $(r_0)_i\neq 0$, thus avoids both issues on each eigenchart.

On each eigenchart \( \Omega\subset \operatorname{int}(K) \), the chosen representative \(r_k\in C^1(\Omega)\) is locally Lipschitz. We therefore first consider the flow map induced by the integral curves of \(r_k\) in its natural temporal coordinate \(t\), given by
\begin{equation}\label{eq:rk-flow}
  \dot u(t)=r_k(u(t)),\qquad u(0)=u_0\in\Omega.
\end{equation}

\begin{proposition}[Maximal \(t\)-flow and compact-set continuation]\label{prop:rk-maximal}
Let \(\Omega\subset\mathbb{R}^n\) be open and \(r_k\in C^1(\Omega)\). For each \(u_0\in\Omega\), there exists a unique maximal solution
\(u:(t_-,t_+)\to\Omega\) of \eqref{eq:rk-flow} with \(u(0)=u_0\), where \(-\infty\le t_-<0<t_+\le\infty\). If \(t_+<\infty\), then \(u(t)\) leaves every compact subset of \(\Omega\) as \(t\uparrow t_+\) (and similarly at \(t_-\)).
\end{proposition}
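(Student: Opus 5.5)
The plan is to obtain this as the standard Picard--Lindel\"of existence--uniqueness theorem combined with the extension (escape-from-compacta) theorem for autonomous ODEs with locally Lipschitz right-hand side. I will carry out three steps: local well-posedness, construction of the maximal solution by gluing, and the blow-up alternative.

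\emph{Step 1 (local existence and uniqueness).} Since \(r_k\in C^1(\Omega)\), the mean value inequality on closed balls \(\overline{B}(v,\rho)\subset\Omega\) shows that \(r_k\) is Lipschitz there, with constant \(\sup_{\overline{B}}\|Dr_k\|\). So \(r_k\) is locally Lipschitz. Picard--Lindel\"of then gives, for every \(v\in\Omega\) and \(t_0\in\mathbb{R}\), a unique solution of \(\dot u=r_k(u)\), \(u(t_0)=v\), on some interval \([t_0-\delta,t_0+\delta]\). One can take \(\delta=\rho/M\) with \(M=1+\sup_{\overline{B}(v,\rho)}|r_k|\).

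Next I will show that any two solutions through \(u_0\) at \(t=0\) agree on the intersection of their domains. Let \(J\) be the set of times in that intersection where they agree. Then \(J\) contains \(0\), it is closed by continuity, and it is open by local uniqueness applied at any point of \(J\). Connectedness of the interval gives that \(J\) is the whole intersection.

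\emph{Step 2 (maximal solution).} I will define \((t_-,t_+)\) as the union of the domains of all solutions through \(u_0\) at time \(0\), and define \(u\) on it by gluing these solutions. Step 1 makes the glued function well defined, and it is a solution. It is maximal by construction and unique by the agreement argument. Local existence at \(t=0\) gives \(t_-<0<t_+\).

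\emph{Step 3 (escape from compacta).} This is the only step needing care. Suppose \(t_+<\infty\) and, for contradiction, that \(u(t)\) does not leave some compact \(C\subset\Omega\) as \(t\uparrow t_+\). Then there are times \(t_j\uparrow t_+\) with \(u(t_j)\in C\).

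To get a uniform existence time, choose \(\rho>0\) with \(C_\rho:=\{v:\operatorname{dist}(v,C)\le\rho\}\subset\Omega\), which is compact. Set \(M=1+\max_{C_\rho}|r_k|\). Then Step 1 gives a solution through any \(v\in C\), starting at any time, that exists for a duration of at least \(\delta=\rho/M\), independent of \(v\).

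Pick \(j\) with \(t_+-t_j<\delta\). Solve from \(u(t_j)\) at time \(t_j\) on \([t_j,t_j+\delta]\) and concatenate with \(u\). By uniqueness the result is a solution extending past \(t_+\), which contradicts maximality. The case \(t_-\) is symmetric, using time reversal \(t\mapsto -t\) with the field \(-r_k\).

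I expect no serious obstacle. The one point needing attention is this uniform \(\delta\): it comes from compactness of \(C_\rho\), which bounds \(|r_k|\) uniformly on a neighborhood of \(C\) contained in \(\Omega\).
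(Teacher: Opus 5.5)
Your proposal is correct and takes the same route as the paper, which proves this by citing Picard--Lindel\"of for the locally Lipschitz field \(r_k\in C^1(\Omega)\) together with the extension theorem for maximal solutions (Hartman, Ch.~II, Theorems~1.1 and~3.1). You write out the standard proofs of those two cited results: the uniform existence time \(\delta=\rho/M\) on the compact neighborhood \(C_\rho\) gives the escape-from-compacta step, and local existence at an endpoint shows that the union of domains in Step~2 is indeed open.
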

The proof follows from the Picard--Lindel\"of theorem together with the extension theorem for maximal solutions, since $r_k\in C^1(\Omega)$ is locally Lipschitz; see~\cite[Ch.~II, Theorems~1.1 and~3.1]{Hartman}.

For an eigenchart \(\Omega\), denote the local maximal flow for \eqref{eq:rk-flow} by
\[
  (t,u_*)\mapsto \phi_k(t;u_*).
\]
The right-hand sides of the flow \eqref{eq:rk-flow} and the rarefaction ODE \eqref{eq:Gamma-xi-ODE} are nonzero scalar multiples of one another, so solutions of the two ODEs are tangent to the same direction at every state and trace the same geometric curve; the two ODEs differ only in how fast this curve is traversed. Comparing how $\lambda_k$ advances along each solution quantifies the two speeds. The chain rule gives
\[
  \frac{d}{dt}\,\lambda_k(\phi_k(t;u_*))=\nabla\lambda_k\cdot r_k
  \qquad\text{and}\qquad
  \frac{d}{d\xi}\,\lambda_k(\Gamma_k(\xi;u_0))=\nabla\lambda_k\cdot\frac{r_k}{\nabla\lambda_k\cdot r_k}=1,
\]
where the first rate is nonvanishing by Assumption~\ref{ass:GN}, so $\xi=\lambda_k$ is strictly monotone along the flow (in dynamical-systems terms, a strict Lyapunov-type function for \eqref{eq:rk-flow}), and the second recovers the parameterization \eqref{eq:Gamma-lambda-param}. Hence \eqref{eq:Gamma-xi-ODE} is the reparameterization of \eqref{eq:rk-flow} by $\xi=\lambda_k$, with conversion rate $d\xi/dt=\nabla\lambda_k\cdot r_k$; an infinitesimal increment $dt$ of the auxiliary, representative-dependent time $t$ advances the eigenvalue parameter $\xi$ by $\nabla\lambda_k\cdot r_k\,dt$. \Cref{fig:level-set-crossing} illustrates this crossing picture; the parameterization \eqref{eq:Gamma-lambda-param} places $\Gamma_k(\xi;u_0)$ on the level set $\{\lambda_k=\xi\}$, so the $t$-flow crosses the level sets of $\lambda_k$ at rate $\nabla\lambda_k\cdot r_k$ per unit time while $\Gamma_k$ crosses exactly one per unit of $\xi$. The chartwise $\xi$-pieces obtained in this way inherit uniqueness from the $t$-flow and glue across eigencharts into a global curve, as described in the following proposition.

\begin{proposition}[Global maximal \(\xi\)-rarefaction curve in \(\operatorname{int}(K)\)]\label{prop:Gamma-maximal}
Fix $u_0\in \operatorname{int}(K)$ and set $\xi_0:=\lambda_k(u_0)$. There exists a unique maximal pair \(I=(\xi_-,\xi_+)\ni \xi_0\) and \(\Gamma_k(\cdot;u_0)\in C^1(I;\operatorname{int}(K))\) satisfying the initial condition \(\Gamma_k(\xi_0;u_0)=u_0\), the parameterization \eqref{eq:Gamma-lambda-param} on \(I\), and the ODE \eqref{eq:Gamma-xi-ODE} on every eigenchart traversed by the curve. The interval \(I\) is maximal among intervals on which such a curve remains in \(\operatorname{int}(K)\). If $\xi_+<\infty$, then $\Gamma_k(\xi;u_0)$ leaves every compact subset of $\operatorname{int}(K)$ as $\xi\uparrow\xi_+$ (and similarly at $\xi_-$).
\end{proposition}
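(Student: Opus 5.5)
The proof proceeds in three steps: a local construction on a single eigenchart, uniqueness of local pieces, and gluing into a maximal curve with a compact-exit property.

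\textbf{Local construction.} Apply \cref{lem:C1-evec-IFT} at $u_0$ to obtain an eigenchart $\Omega\subset\operatorname{int}(K)$ with a $C^1$ representative $r_k$. Proposition~\ref{prop:rk-maximal} then gives a unique $C^1$ flow $t\mapsto\phi_k(t;u_0)$. The scalar $g(t):=\lambda_k(\phi_k(t;u_0))$ satisfies $g'(t)=\nabla\lambda_k\cdot r_k$, which is continuous and nonvanishing by Assumption~\ref{ass:GN}. On a small interval $g$ is therefore a $C^1$ diffeomorphism onto an open interval around $\xi_0$. Set $\Gamma_k(\xi;u_0):=\phi_k(g^{-1}(\xi);u_0)$. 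By the chain rule this curve is $C^1$, satisfies \eqref{eq:Gamma-lambda-param}, and satisfies \eqref{eq:Gamma-xi-ODE}. This gives local existence.

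\textbf{Uniqueness of local pieces.} This is the key point, since \eqref{eq:Gamma-xi-ODE} has only a $C^0$ right-hand side. Suppose $\tilde\Gamma$ is any $C^1$ solution of \eqref{eq:Gamma-lambda-param}–\eqref{eq:Gamma-xi-ODE} through $u_0$ that stays in the eigenchart $\Omega$. Define $\tau(\xi):=\int_{\xi_0}^{\xi}d\eta/(\nabla\lambda_k\cdot r_k)(\tilde\Gamma(\eta))$, which is $C^1$ and strictly monotone. Then $\tilde\Gamma\circ\tau^{-1}$ solves $\dot u=r_k(u)$ with $u(0)=u_0$. Picard--Lindel\"of forces $\tilde\Gamma\circ\tau^{-1}=\phi_k(\cdot;u_0)$. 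Applying $\lambda_k$ to both sides gives $\tau^{-1}=g^{-1}$, so $\tilde\Gamma=\Gamma_k$. The same argument shows that the $\xi$-curve is independent of the chosen chart or representative on overlaps, because representatives differ by a nonvanishing $C^1$ scalar and the quotient in \eqref{eq:Gamma-xi-ODE} is invariant. The argument also works when the base point is any point of the curve, not just $u_0$.

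\textbf{Gluing and maximality.} Consider the family of all pairs $(J,\gamma)$ in which $J\ni\xi_0$ is an open interval and $\gamma\in C^1(J;\operatorname{int}(K))$ satisfies the initial condition, \eqref{eq:Gamma-lambda-param}, and \eqref{eq:Gamma-xi-ODE} chartwise. By the local uniqueness above, any two such solutions agree on the intersection of their intervals. To see this, let $\xi^*$ be the supremum of the set on which they coincide; if $\xi^*$ lies inside both intervals, rerun the local uniqueness argument at the common point $\gamma(\xi^*)$, which contradicts the choice of $\xi^*$. Taking the union of all such intervals therefore defines the maximal $I=(\xi_-,\xi_+)$ and $\Gamma_k$ unambiguously.

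\textbf{Compact exit.} Suppose $\xi_+<\infty$ but, for some compact $C\subset\operatorname{int}(K)$, the curve $\Gamma_k(\xi_j)$ stays in $C$ along a sequence $\xi_j\uparrow\xi_+$. By Lemma~\ref{lem:C1-evec-IFT} and compactness, $C$ is covered by finitely many eigencharts. After shrinking these charts, the representatives $r_k$ are bounded, and $|\nabla\lambda_k\cdot r_k|$ is bounded above and below, giving a Lebesgue number $\delta>0$. A uniform version of the local construction then applies. From any point $p\in C$, the $\xi$-curve exists on $(\lambda_k(p)-\eta,\lambda_k(p)+\eta)$ for an $\eta>0$ independent of $p$, since the flow time is bounded below by $\delta/\sup|r_k|$ and the eigenvalue advances at a rate bounded below. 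Choose $j$ with $\xi_+-\xi_j<\eta$ and restart the construction at $\Gamma_k(\xi_j)$. By uniqueness this extends $\Gamma_k$ beyond $\xi_+$, contradicting maximality. The case $\xi_-$ is symmetric.

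I expect this compact-exit step to be the main obstacle. It is delicate because it requires a quantitative, chart-uniform lower bound on the existence interval at bare $C^2$ regularity, so the constants must be tracked carefully across the finite cover.
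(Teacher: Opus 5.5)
Your proof is correct. The local construction, the uniqueness step and the gluing match Steps~1--3 of the paper's proof: you reparameterize a competitor by $\tau(\xi)=\int_{\xi_0}^{\xi}d\eta/(\nabla\lambda_k\cdot r_k)(\tilde\Gamma(\eta))$ and invoke Picard--Lindel\"of, and your supremum argument is the same connectedness argument as the paper's open--closed agreement set. There is one slip in the uniqueness step. Applying $\lambda_k$ to $\tilde\Gamma\circ\tau^{-1}=\phi_k(\cdot\,;u_0)$ gives $\tau^{-1}=g$, i.e.\ $\tau=g^{-1}$, not $\tau^{-1}=g^{-1}$; the conclusion $\tilde\Gamma=\Gamma_k$ is unaffected.

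The compact-exit step is where you genuinely differ. The paper does not use a uniform existence time. It proceeds in three moves:
\begin{itemize}
\item it picks a compact $C'$ with $C\subset\operatorname{int}(C')$ and bounds the representative-independent $\xi$-velocity $r_k/(\nabla\lambda_k\cdot r_k)$ of \eqref{eq:Gamma-xi-ODE} by $M$ on $C'$;
\item it concludes that the tail of the curve stays in $C'$ and is Lipschitz, so it converges to some $u_*\in C$ with $\lambda_k(u_*)=\xi_+$;
\item it continues the $t$-flow through $u_*$ in one eigenchart at $u_*$, patching via the integral equation, and converts back to $\xi$.
\end{itemize}
Your route is the textbook ODE extension argument. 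You shrink to relatively compact sub-charts and take a Lebesgue number $\delta$ in the ambient sense, so each ball $B(p,\delta)$ with $p\in C$ lies in one sub-chart. The bounds $|r_k|\le R$ and $|\nabla\lambda_k\cdot r_k|\ge c$ then give a uniform half-width $\eta=c\delta/R$. You restart at the actual curve point $\Gamma_k(\xi_j;u_0)\in C$ and glue by the uniqueness already proved.

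This buys you two things: you never prove convergence of the tail, and you never patch a flow through a limit point by hand. The cost is a finite-cover uniformity argument. The paper needs local existence only at the single point $u_*$, but it relies on global continuity of the representative-free $\xi$-vector field on $\operatorname{int}(K)$.

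Contrary to your closing worry, nothing delicate happens at $C^2$. Only suprema and infima of continuous functions over compact sets enter. The sign of $\nabla\lambda_k\cdot r_k$ is constant along each flow segment inside a ball, so the level advances monotonically at rate at least $c$.
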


We briefly sketch the proof and leave the details to Appendix~\ref{app:Gamma-maximal-proof}. On an eigenchart, \(\Xi(t):=\lambda_k(\phi_k(t;u_*))\) is strictly monotone by the chain-rule computation above, so the inverse function theorem gives a local inverse \(t=t(\xi)\), and \(\Gamma_k(\xi):=\phi_k(t(\xi);u_*)\) satisfies \eqref{eq:Gamma-lambda-param} and \eqref{eq:Gamma-xi-ODE}. The rest of the proof relies on the uniqueness of the \(t\)-flow from \cref{prop:rk-maximal}. If two local \(\xi\)-pieces overlap, pulling both back to \(t\)-flows in a common eigenchart forces agreement, so the compatible local pieces glue into a unique maximal \(\xi\)-curve.

\begin{figure}[t]
  \centering
  \begin{tikzpicture}[>=Stealth, scale=1.0,
      every node/.style={font=\small}]

    \coordinate (P0) at (0.0,-0.8);
    \coordinate (P1) at (2.0, 0.0);
    \coordinate (P2) at (4.9, 0.72);

    \draw[gray!60, thick]
      ($(P0)+(-0.50,-1.30)$) .. controls ($(P0)+(-0.11,-0.43)$)
      and ($(P0)+(0.11,0.43)$) .. ($(P0)+(0.50,1.30)$);
    \draw[gray!60, thick]
      ($(P1)+(-0.50,-1.30)$) .. controls ($(P1)+(-0.11,-0.43)$)
      and ($(P1)+(0.11,0.43)$) .. ($(P1)+(0.50,1.30)$);
    \draw[gray!60, thick]
      ($(P2)+(-0.50,-1.30)$) .. controls ($(P2)+(-0.11,-0.43)$)
      and ($(P2)+(0.11,0.43)$) .. ($(P2)+(0.50,1.30)$);

    \node[gray!50!black, font=\scriptsize, anchor=north]
      at ($(P0)+(-0.50,-1.42)$) {$\{\lambda_k=\xi_0\}$};
    \node[gray!50!black, font=\scriptsize, anchor=north]
      at ($(P1)+(-0.50,-1.42)$)
      {$\{\lambda_k=\xi_0+\int_0^1\nabla\lambda_k\cdot r_k\,ds\}$};
    \node[gray!50!black, font=\scriptsize, anchor=north]
      at ($(P2)+(-0.50,-1.42)$)
      {$\{\lambda_k=\xi_0+\int_0^2\nabla\lambda_k\cdot r_k\,ds\}$};

    \draw[thick, ->]
      plot [smooth, tension=0.7] coordinates
      {(-1.0,-1.2) (P0) (P1) (P2) (5.8,0.92)};
    \node[anchor=west] at (5.95,1.05) {$\Gamma_k(\cdot\,;u_0)$};

    \draw[->, very thick, blue!70!black] (P1) -- ($(P1)+(0.75,0.24)$);
    \node[blue!70!black, font=\scriptsize, anchor=south]
      at ($(P1)+(0.55,0.35)$) {$r_k$};

    \draw[->, very thick, red!70!black] (P1) -- ($(P1)+(0.78,-0.20)$);
    \node[red!70!black, font=\scriptsize, anchor=west]
      at ($(P1)+(0.88,-0.28)$) {$\nabla\lambda_k$};

    \node[font=\scriptsize, anchor=north west]
      at ($(P0)+(0.12,-0.16)$) {$t=0$};
    \node[font=\scriptsize, anchor=south east]
      at ($(P1)+(-0.08,0.08)$) {$t=1$};
    \node[font=\scriptsize, anchor=south east]
      at ($(P2)+(-0.08,0.08)$) {$t=2$};

    \draw[fill=white, thick] (P0) circle (2.0pt);
    \node[anchor=east] at ($(P0)+(-0.18,0.02)$) {$u_0$};
    \draw[fill=black] (P1) circle (1.8pt);
    \draw[fill=black] (P2) circle (1.8pt);

  \end{tikzpicture}
  \caption{The rarefaction curve through \(u_0\), marked at unit
    flow times \(t=0,1,2\). The flow \eqref{eq:rk-flow} crosses the level
    sets of \(\lambda_k\) at rate \(\nabla\lambda_k\cdot r_k\) per unit
    \(t\), while \eqref{eq:Gamma-xi-ODE} crosses one level set per unit
    \(\xi\).}
  \label{fig:level-set-crossing}
\end{figure}

From now on, let \(I\) denote the maximal \(\xi\)-interval from Proposition~\ref{prop:Gamma-maximal} and \(\xi_0=\lambda_k(u_0)\). Define the forward and backward branches of the rarefaction curve as
\[
  \mathcal{R}^+_{k,u_0}:=\{\Gamma_k(\xi;u_0):\xi\in I,\ \xi>\xi_0\},
  \qquad
  \mathcal{R}^-_{k,u_0}:=\{\Gamma_k(\xi;u_0):\xi\in I,\ \xi<\xi_0\}
\]
respectively. The sign of \(\nabla\lambda_k(u_0)\cdot r_k(u_0)\) fixes the orientation of increasing \(t\) versus increasing \(\xi\), since the chain rule gives \(\tfrac{d}{dt}\lambda_k(\phi_k(t;u_0))=\nabla\lambda_k\cdot r_k\) along the $r_k$-flow.

\subsubsection{Rarefaction map}

Compared with the lifted/projected Hugoniot construction, rarefaction curves are first constructed directly in state space. Hence, the goal of this subsection is to introduce the auxiliary parameter \(\xi=\lambda_k\) so as to place rarefaction and shock objects in the same lifted framework.

\begin{definition}[Rarefaction map]\label{def:rarefaction-map}
Define the rarefaction map based at \(u_0\), with \(I=(\xi_-,\xi_+)\) from Proposition~\ref{prop:Gamma-maximal}, by
\[
  R_{k,u_0}\colon \operatorname{int}(K)\times I\to\mathbb{R}^n,\qquad
  R_{k,u_0}(u,\xi) := u - \Gamma_k(\xi;u_0).
\]
To emphasize the base state explicitly, we can also write
\(R_k(u,\xi;u_0) := R_{k,u_0}(u,\xi)\).
\end{definition}

Since \(\Gamma_k(\cdot;u_0)\in C^1(I;\operatorname{int}(K))\), the map \(R_{k,u_0}\) is \(C^1\) on \(\operatorname{int}(K)\times I\).
The lifted \(k\)-th rarefaction curve from \(u_0\) is the zero level set of
\(R_{k,u_0}\).  Differentiating with respect to \((u,\xi)\),
\begin{equation}\label{eq:rarefaction-jacobian}
  D_{(u,\xi)}R_{k,u_0}(u,\xi)
  = \bigl[\,I_n \;\bigm|\; -\Gamma_k'(\xi;u_0)\,\bigr].
\end{equation}

\begin{definition}[Lifted and projected rarefaction curves]\label{def:lifted-rarefaction}
Let \(\pi\colon \operatorname{int}(K)\times I\to \operatorname{int}(K)\) be the projection \(\pi(u,\xi)=u\).
The \emph{lifted \(k\)-th rarefaction curve} is defined by
\[
  \widehat{\mathcal{R}}_{k,u_0}
  := \bigl\{(u,\xi)\in \operatorname{int}(K)\times I : R_{k,u_0}(u,\xi)=0\bigr\}
  = R_{k,u_0}^{-1}(0).
\]
Equivalently,
\begin{equation}\label{eq:lifted-rarefaction-graph}
  \widehat{\mathcal{R}}_{k,u_0}
  = \bigl\{(\Gamma_k(\xi;u_0),\xi):\xi\in I\bigr\}.
\end{equation}
Correspondingly, the \emph{projected \(k\)-th rarefaction curve} is defined by
\[
  \mathcal{R}_{k,u_0}
  := \pi(\widehat{\mathcal{R}}_{k,u_0})
  = \Gamma_k(I;u_0)
  \subset \operatorname{int}(K).
\]
\end{definition}

\begin{figure}[t]
  \centering
  \begin{tikzpicture}[>=Stealth, scale=1.0,
      every node/.style={font=\small}]
    \draw[thick] (0,0) ellipse (3.8cm and 1.6cm);
    \node[anchor=west] at (3.9,0) {$U$};

    \draw[thick] (-2.8,-0.3) .. controls (-1.8,-0.4) and (-0.6,0.0) .. (0.4,0.3)
                              .. controls (1.4,0.6) and (2.4,0.4) .. (3.2,0.5);
    \node[font=\small, rotate=12, anchor=south] at (-1.5,0.05)
      {$\mathcal{R}_{k,u_0}=\pi(\widehat{\mathcal{R}}_{k,u_0})$};

    \coordinate (proj) at (0.4,0.3);
    \draw[thick] (proj) ++(-2.5pt,-2.5pt) -- ++(5pt,5pt)
                 (proj) ++(-2.5pt,2.5pt) -- ++(5pt,-5pt);
    \node[below] at (proj) {$u_0$};

    \draw[->, thick] (-4.0,3.6) -- (-4.0,4.3) node[above] {$\xi$};

    \coordinate (lifted) at (0.4,3.2);
    \draw[thick] (-2.2,2.2) .. controls (-1.0,2.6) and (0.0,3.0) .. (lifted)
                             .. controls (1.2,3.6) and (2.2,4.0) .. (3.4,4.4);
    \node[anchor=south, font=\small] at (-0.8,3.0) {$\widehat{\mathcal{R}}_{k,u_0}$};

    \node[anchor=west, font=\small\itshape] at (3.0,4.6) {strictly monotone};
    \draw[->, thin] (3.0,4.55) -- (2.6,4.2);

    \draw[thick] (lifted) ++(-2.5pt,-2.5pt) -- ++(5pt,5pt)
                 (lifted) ++(-2.5pt,2.5pt) -- ++(5pt,-5pt);

    \draw[dashed, thick] (lifted) -- (proj);

    \draw[->, thick] (-4.5,3.2) to[bend right=20] node[left] {$\pi$} (-4.5,1.2);

    \draw[->, thick] (4.8,1.2) to[bend right=20] node[right] {$\pi^{-1}\big|_{\mathcal{R}_{k,u_0}}$} (4.8,3.2);
  \end{tikzpicture}
  \caption{Diagram illustrating the relationship between the lifted and the original rarefaction curves.}
  \label{fig:lifted-rare}
\end{figure}

\begin{proposition}[Lifted rarefaction curve is a 1D submanifold]\label{prop:lifted-rarefaction}
The lifted rarefaction curve \(\widehat{\mathcal{R}}_{k,u_0}\) is a \(C^1\)
embedded submanifold of \(\operatorname{int}(K)\times I\) of dimension~\(1\).
\end{proposition}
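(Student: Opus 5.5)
The plan is to mirror the Hugoniot argument: apply the Regular Value Theorem (Theorem~\ref{thm:rvt}) to \(G=R_{k,u_0}\colon \operatorname{int}(K)\times I\to\mathbb{R}^n\) at the value \(y=0\), with \(N=n+1\) and \(k=n\). Two ingredients are needed. The first is regularity: \(R_{k,u_0}\) is \(C^1\) on the open set \(\operatorname{int}(K)\times I\subset\mathbb{R}^{n+1}\). This holds because \(u\mapsto u\) is smooth and \(\xi\mapsto\Gamma_k(\xi;u_0)\) is \(C^1\) on \(I\) by Proposition~\ref{prop:Gamma-maximal}. The second is the rank condition, which comes directly from the Jacobian~\eqref{eq:rarefaction-jacobian}. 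The matrix \(\bigl[\,I_n \;\bigm|\; -\Gamma_k'(\xi;u_0)\,\bigr]\) contains the \(n\times n\) identity block, so it has rank \(n\) at every point of the domain, and in particular on \(R_{k,u_0}^{-1}(0)\).

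So \(0\) is a regular value, and in contrast to the Hugoniot case no hypothesis like Assumption~\ref{ass:regular} is needed and no base point has to be excluded. The Regular Value Theorem then gives that \(\widehat{\mathcal{R}}_{k,u_0}\) is a \(C^1\) embedded submanifold of dimension \((n+1)-n=1\), with tangent space \(\ker D_{(u,\xi)}R_{k,u_0}=\spn\{(\Gamma_k'(\xi;u_0),1)\}\).

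It should also be checked that the set is nonempty, so the dimension statement is meaningful. This follows because \((u_0,\xi_0)\) lies in it, since \(\Gamma_k(\xi_0;u_0)=u_0\).

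As an independent cross-check, one can use the graph description~\eqref{eq:lifted-rarefaction-graph}. The map \(\xi\mapsto(\Gamma_k(\xi;u_0),\xi)\) is a \(C^1\) injective immersion, since its last component has derivative \(1\). Its inverse is the restriction of the continuous projection \((u,\xi)\mapsto\xi\), so it is a homeomorphism onto its image and hence an embedding. In fact \(\widehat{\mathcal{R}}_{k,u_0}\) is the graph of a \(C^1\) map \(I\to\operatorname{int}(K)\), and graphs of \(C^1\) maps over open sets are embedded submanifolds.

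I do not expect a real obstacle. The only subtle point is that the \(C^1\) regularity of \(\Gamma_k\) on the whole of \(I\) is exactly what Proposition~\ref{prop:Gamma-maximal} provides through the eigenchart gluing. The argument therefore rests on that earlier result, which is assumed here, rather than on anything new.
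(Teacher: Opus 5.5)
Your proposal is correct and matches the paper's proof: the identity block in the Jacobian~\eqref{eq:rarefaction-jacobian} gives rank $n$ everywhere, so $0$ is a regular value of $R_{k,u_0}$, and the Regular Value Theorem yields a one-dimensional $C^1$ embedded submanifold. Your nonemptiness check, the tangent-space formula, and the graph-based cross-check are all valid additions that the paper does not spell out.
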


\begin{proof}
The Jacobian~\eqref{eq:rarefaction-jacobian} contains \(I_n\) as its
first \(n\) columns (so it is an \(n\times(n+1)\) matrix with an identity block), hence \(D_{(u,\xi)}R_{k,u_0}(u,\xi)\) has rank~\(n\) everywhere.
Hence \(0\) is a regular value of \(R_{k,u_0}\), and the Regular Value
Theorem (Theorem~\ref{thm:rvt}) gives
\(\dim\widehat{\mathcal{R}}_{k,u_0}=(n+1)-n=1\).
\end{proof}

\begin{proposition}[Projected rarefaction curve is a 1D embedded submanifold]\label{prop:rarefaction-embedding}
Under the standing assumptions of this subsection, the parameterization
\[
  \Gamma_k(\cdot;u_0)\colon I\to \operatorname{int}(K)
\]
is a \(C^1\) embedding onto its image
\(\mathcal{R}_{k,u_0}=\Gamma_k(I;u_0)\). Consequently, \(\mathcal{R}_{k,u_0}\)
is a one-dimensional \(C^1\) embedded submanifold of~\(\operatorname{int}(K)\).
\end{proposition}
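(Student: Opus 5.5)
The plan is to mirror the Hugoniot argument (Lemmas~\ref{lem:injectivity} and~\ref{lem:immersion} together with Theorem~\ref{thm:embedding-image}), but for the rarefaction curve the wave speed gives a global inverse directly, so each step is simpler.

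\emph{Step 1: regularity and immersion.} By Proposition~\ref{prop:Gamma-maximal}, \(\Gamma_k(\cdot;u_0)\in C^1(I;\operatorname{int}(K))\). Differentiating the parameterization \eqref{eq:Gamma-lambda-param} gives
\[
\nabla\lambda_k(\Gamma_k(\xi;u_0))\cdot\Gamma_k'(\xi;u_0)=1 ,
\]
so \(\Gamma_k'(\xi;u_0)\neq 0\) for every \(\xi\in I\). Hence \(\Gamma_k(\cdot;u_0)\) is a \(C^1\) immersion. Only continuity of \(\nabla\lambda_k\) is needed here, which holds since \(F\in C^2\). The eigenchart-wise ODE \eqref{eq:Gamma-xi-ODE} gives the same conclusion.

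\emph{Step 2: injectivity and a continuous inverse.} The map \(\lambda_k\colon\operatorname{int}(K)\to\mathbb{R}\) is continuous and in fact \(C^1\): simple eigenvalues of a \(C^1\) matrix field are \(C^1\), by Lemma~\ref{lem:C1-evec-IFT}. By \eqref{eq:Gamma-lambda-param}, \(\lambda_k\circ\Gamma_k(\cdot;u_0)=\mathrm{id}_I\). This immediately gives two facts:
\begin{itemize}
\item \(\Gamma_k(\cdot;u_0)\) is injective, since \(\Gamma_k(\xi_1)=\Gamma_k(\xi_2\)) implies \(\xi_1=\xi_2\) after applying \(\lambda_k\);
\item the restriction \(\lambda_k|_{\mathcal{R}_{k,u_0}}\) is a continuous inverse of \(\Gamma_k(\cdot;u_0)\) on its image.
\end{itemize}
Thus \(\Gamma_k(\cdot;u_0)\) is a homeomorphism onto \(\mathcal{R}_{k,u_0}\) with the subspace topology. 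This is precisely the point where a generic injective immersion can fail to be an embedding (for example, a curve accumulating on itself). Here the failure is ruled out because the ambient function \(\lambda_k\) recovers the parameter, just as the shock-speed function \(\sigma\) did for the Hugoniot locus.

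\emph{Step 3: conclusion.} An injective \(C^1\) immersion that is a homeomorphism onto its image is a \(C^1\) embedding. Invoking Theorem~\ref{thm:embedding-image}, or equivalently Proposition~\ref{cor:immersion-local} combined with the continuous inverse, shows that \(\mathcal{R}_{k,u_0}\) is a one-dimensional \(C^1\) embedded submanifold of \(\operatorname{int}(K)\). Alternatively, one can observe that \(\pi|_{\widehat{\mathcal{R}}_{k,u_0}}\) has inverse \(u\mapsto(u,\lambda_k(u))\), and combine this with Proposition~\ref{prop:lifted-rarefaction}.

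The only potentially delicate step is Step 2, namely ensuring that the topology on the image is the subspace topology. Since the continuous left inverse \(\lambda_k\) is defined on all of \(\operatorname{int}(K)\), I expect this step to go through directly.
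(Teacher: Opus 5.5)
Your proposal is correct and follows the paper's proof: the immersion, injectivity by applying $\lambda_k$ and \eqref{eq:Gamma-lambda-param}, the continuous inverse $\lambda_k|_{\mathcal{R}_{k,u_0}}$ giving a topological embedding, and then Theorem~\ref{thm:embedding-image}. The only cosmetic difference is how you get $\Gamma_k'(\xi;u_0)\neq 0$: you differentiate $\lambda_k\circ\Gamma_k(\cdot;u_0)=\mathrm{id}_I$, while the paper reads it off \eqref{eq:Gamma-xi-ODE}, a nonzero eigenvector divided by a denominator that is nonzero by genuine nonlinearity, and you note that variant yourself.
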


\begin{proof}
\emph{Immersion.}
From~\eqref{eq:Gamma-xi-ODE},
\[
  \Gamma_k'(\xi;u_0)
  =\frac{r_k(\Gamma_k(\xi;u_0))}
  {\nabla\lambda_k(\Gamma_k(\xi;u_0))\cdot r_k(\Gamma_k(\xi;u_0))}.
\]
The numerator is nonzero (eigenvectors are nonzero), and the denominator is
nonzero by genuine nonlinearity. Hence \(\Gamma_k'(\xi;u_0)\neq 0\) on \(I\),
so \(\Gamma_k\) is an immersion.

\emph{Injectivity.}
If \(\Gamma_k(\xi_1;u_0)=\Gamma_k(\xi_2;u_0)\), then applying \(\lambda_k\) and
using~\eqref{eq:Gamma-lambda-param} gives
\[
  \xi_1=\lambda_k(\Gamma_k(\xi_1;u_0))
  =\lambda_k(\Gamma_k(\xi_2;u_0))
  =\xi_2.
\]
Thus \(\Gamma_k\) is injective.

\emph{Topological embedding and conclusion.}
On \(\mathcal{R}_{k,u_0}=\Gamma_k(I;u_0)\), the inverse is given by
\[
  (\Gamma_k(\cdot;u_0))^{-1}(u)=\lambda_k(u),
\]
by~\eqref{eq:Gamma-lambda-param}. Since \(\lambda_k\) is continuous on
\(\operatorname{int}(K)\), this inverse is continuous (for the subspace topology
on \(\mathcal{R}_{k,u_0}\)). Therefore \(\Gamma_k\colon I\to\mathcal{R}_{k,u_0}\) is a
\(C^1\) embedding. By Theorem~\ref{thm:embedding-image},
\(\mathcal{R}_{k,u_0}\) is a \(C^1\) embedded submanifold of
\(\operatorname{int}(K)\), and since \(\dim I=1\), it is one-dimensional.
\end{proof}

\begin{remark}\label{rem:rarefaction-no-separate-proj-proof}
We do not separately prove \(\pi|_{\widehat{\mathcal{R}}_{k,u_0}}\) is a \(C^1\) embedding. For the Hugoniot loci, the natural geometric object is the lifted space 
\(U\times\mathbb{R}\). 
For rarefaction curves, the natural object already lives in the state
space as \(\Gamma_k(I;u_0)\subset \operatorname{int}(K)\), produced by the flow \eqref{eq:rk-flow} in the state variable alone, with wave speed given by the known function $\xi=\lambda_k(u)$ of the state rather than by an additional unknown (shock speed). 
\end{remark}

\subsection{Riemann problem for $n \times n$ systems}
\label{ssec:wave-curves}

The Riemann problem asks one to solve the conservation law with piecewise-constant initial data \eqref{eq:conservation-law}--\eqref{eq:Riemann-data}.
The standard approach seeks a self-similar solution consisting of \(n\) elementary waves, each belonging to a distinct characteristic family.
Entropy conditions then select which waves are physically admissible.
This section introduces the Lax entropy condition, defines wave curves, and illustrates the resulting structure through three examples, two of which are $2 \times 2$ systems and one of which is a $3 \times 3$ system.
In these examples, we illustrate how the lifted-space framework of Sections~\ref{ssec:hugoniot}--\ref{ssec:rarefaction} provides a convenient visualization of the speed-ordering conditions between successive waves.

\subsubsection{Lax entropy conditions and wave curves}

\begin{definition}[Lax entropy condition]\label{def:lax-entropy}
A discontinuity connecting a left state \(u_l\) to a right state \(u_r\) with speed~\(s\) is a \(k\)\textbf{-shock} (\(1\le k\le n\)) if \(H_{u_l}(u_r,s)=0\) (see \eqref{eq:RH-map}) and
\begin{equation}\label{eq:lax-entropy}
\lambda_k(u_l) > s > \lambda_k(u_r),\quad
\lambda_{k-1}(u_l) < s < \lambda_{k+1}(u_r),
\end{equation}
where, in the second pair of inequalities, the half involving $\lambda_0$ or $\lambda_{n+1}$ is omitted when the subscript falls outside $\{1,\dots,n\}$~\cite{Lax}.
\end{definition}

In view of the entropy condition, we divide the Hugoniot locus into two branches.

\begin{definition}[Hugoniot sub-loci \(\mathcal{H}^-_{k,u_0}\), \(\mathcal{H}^+_{k,u_0}\)]\label{def:hugoniot-branches}
For a given state \(u_0\in\operatorname{int}(K)\), define
\[
  \mathcal{H}^-_{k,u_0}
  :=\bigl\{u\in\mathcal{H}_{u_0}:
     \lambda_k(u_0)>s>\lambda_k(u),\;
     \lambda_{k-1}(u_0)<s<\lambda_{k+1}(u)\bigr\},
\]
\[
  \mathcal{H}^+_{k,u_0}
  :=\bigl\{u\in\mathcal{H}_{u_0}:
     \lambda_k(u_0)<s<\lambda_k(u),\;
     \lambda_{k+1}(u_0)>s>\lambda_{k-1}(u)\bigr\},
\]
where \(s\) is the unique shock speed associated to the pair \((u_0,u)\) by the Rankine--Hugoniot map \(H_{u_0}\) \eqref{eq:RH-map}.
The superscript sign indicates whether the connecting state has a higher (\(+\)) or lower (\(-\)) \(k\)-eigenvalue than \(u_0\).
\end{definition}

The rarefaction branches \(\mathcal{R}^+_{k,u_0}\) and \(\mathcal{R}^-_{k,u_0}\) were defined after Proposition~\ref{prop:Gamma-maximal}. The \(+\) (resp.\ \(-\)) branch consists of states along the \(k\)-rarefaction curve with \(\lambda_k\) increasing (resp.\ decreasing) from \(\lambda_k(u_0)\).

\begin{definition}[Forward and backward wave curves]\label{def:wave-curves}
For each family \(k\in\{1,\dots,n\}\) and base state \(u_0\in\operatorname{int}(K)\), the \textbf{forward} and \textbf{backward} \(k\)-wave curves are given by
$$
  \mathcal{W}^f_{k,u_0}
  := \mathcal{R}^+_{k,u_0}\cup\mathcal{H}^-_{k,u_0},\quad
  \mathcal{W}^b_{k,u_0}
  := \mathcal{R}^-_{k,u_0}\cup\mathcal{H}^+_{k,u_0},
$$
following the notation of \cite{Liu_shocks}.
\end{definition}

In an \(n\times n\) Riemann problem, one connects \(u_l\) to \(u_r\) through a chain of \(n\) elementary waves with intermediate states
\[
  u^{(0)}=u_l,\quad
  u^{(1)},\;\dots,\;u^{(n-1)},\quad
  u^{(n)}=u_r,
\]
where the \(j\)-th connection lies on the \(j\)-family forward wave curve, so that \(u^{(j)}\in\mathcal{W}^f_{j,u^{(j-1)}}\) for each \(j=1,\dots,n\).
The wave speeds (shock speeds or characteristic speeds) must be ordered so that each wave propagates faster than the preceding one.

\begin{remark}[Role of backward wave curves]\label{rem:backward-curves}
The forward-chain formulation above is the standard way to pose the Riemann problem.
In practice, however, one cannot determine the intermediate states \(u^{(1)},\dots,u^{(n-1)}\) \textit{a priori}.
For instance, in the \(2\times 2\) case, finding \(u^{(1)}\) amounts to intersecting the forward 1-wave curve from \(u_l\) with the backward 2-wave curve from \(u_r\).
If the intermediate state were already known, the solution would simply be ``forward from \(u_l\) to \(u^{(1)}\), then forward from \(u^{(1)}\) to \(u_r\).''
Finding \(u^{(1)}\), however, is precisely the intersection problem.
The backward formulation makes this intersection geometrically visible, and motivates the approach taken in our prior \(2\times 2\) paper \cite{TanBertozzi2x2}.
\end{remark}

\subsubsection{Example I: Double Rarefaction (\texorpdfstring{$2\times 2$}{2x2})}
\label{sssec:double-rare}

Consider a \(2\times 2\) system in which the Riemann problem is solved by a 1-rarefaction followed by a 2-rarefaction:
\[
  u_l \xrightarrow{\;\xi\in[\lambda_1(u_l),\,\lambda_1(u_1^*)]\;} u_1^*
       \xrightarrow{\;\xi\in[\lambda_2(u_1^*),\,\lambda_2(u_r)]\;} u_r.
\]
The intermediate state \(u_1^*\) lies in \(\mathcal{R}^+_{1,u_l}\cap \mathcal{R}^-_{2,u_r}\), and
the 1-rarefaction fan occupies characteristic speeds from \(\lambda_1(u_l)\) to \(\lambda_1(u_1^*)\), while the 2-rarefaction fan sweeps from \(\lambda_2(u_1^*)\) to \(\lambda_2(u_r)\).
For the two fans to be separated, one needs
\begin{equation}\label{eq:double-rare-gap}
  \lambda_1(u_1^*) < \lambda_2(u_1^*).
\end{equation}
Strict hyperbolicity guarantees \(\lambda_1(\cdot)<\lambda_2(\cdot)\) everywhere, so this gap condition is automatically satisfied.
This stands in contrast to the shock--rarefaction case \eqref{eq:shock-rare-gap}, where the gap $s_1^*<\lambda_2(u_1^*)$ comes from admissibility of the 1-shock rather than from strict hyperbolicity. See \cref{fig:double-rare-example} for an illustration.

\input{figures/fig-double-rare}

\subsubsection{Example II: Shock--Rarefaction (\texorpdfstring{$2\times 2$}{2x2})}
\label{sssec:shock-rare}

Consider a \(2\times 2\) system (\(n=2\)) in which the Riemann problem is solved by a 1-shock followed by a 2-rarefaction:
\[
  u_l \xrightarrow{\;s_1^*\;} u_1^* \xrightarrow{\;\xi\in[\lambda_2(u_1^*),\,\lambda_2(u_r)]\;} u_r.
\]
The intermediate state \(u_1^*\) lies in \(\mathcal{H}^-_{1,u_l}\cap \mathcal{R}^-_{2,u_r}\), with
the Lax entropy condition for the 1-shock giving \(\lambda_1(u_l)>s_1^*>\lambda_1(u_1^*)\), while the 2-rarefaction connects \(u_1^*\) to \(u_r\) through a fan of characteristic speeds sweeping from \(\lambda_2(u_1^*)\) to \(\lambda_2(u_r)\).
Hence, a necessary condition for the waves to be separated in the \((x,t)\)-plane is given by
\begin{equation}\label{eq:shock-rare-gap}
  s_1^* < \lambda_2(u_1^*).
\end{equation}
See \cref{fig:shock-rare-example} for an illustration.

\input{figures/fig-shock-rare}

\subsubsection{Example III: Triple Shock (\texorpdfstring{$3\times 3$}{3x3})}
\label{sssec:triple-shock}

Consider a \(3\times 3\) system (\(n=3\)) in which the Riemann problem is solved by three shocks:
\[
  u_l \xrightarrow{\;s_1^*\;} u_1^*
       \xrightarrow{\;s_2^*\;} u_2^*
       \xrightarrow{\;s_3^*\;} u_r.
\]
The three shocks appear left to right in the $(x,t)$-plane, satisfying
\begin{equation}\label{eq:triple-shock-ordering}
  s_1^* < s_2^* < s_3^*.
\end{equation}
To visualize the solution in the lifted state space, we draw three Hugoniot branches.
The lifted branch over the forward 1-Hugoniot $\mathcal{H}^-_{1,u_l}$ emanates from the open circle at $(u_l,\lambda_1(u_l))$ and reaches $u_1^*$ at height $s_1^*$.
The lifted branch over the forward 2-Hugoniot $\mathcal{H}^-_{2,u_1^*}$ emanates from the open circle at $(u_1^*,\lambda_2(u_1^*))$ and reaches $u_2^*$ at height $s_2^*$.
The lifted branch over the backward 3-Hugoniot $\mathcal{H}^+_{3,u_r}$ emanates from the open circle at $(u_r,\lambda_3(u_r))$ and reaches $u_2^*$ at height $s_3^*$.
At $u_1^*$, the gap between $s_1^*$ (end of the 1-Hugoniot) and $\lambda_2(u_1^*)$ (start of the 2-Hugoniot) reflects the Lax condition $s_1^* < \lambda_2(u_1^*)$, into which the next shock speed $s_2^*$ must fit.
At $u_2^*$, the gap between $s_2^*$ (end of the forward 2-Hugoniot) and $s_3^*$ (end of the backward 3-Hugoniot) is the speed-ordering condition $s_2^* < s_3^*$ from~\eqref{eq:triple-shock-ordering}.
See \cref{fig:triple-shock-example} for an illustration.

\input{figures/fig-triple-shock}

\section{Transversality and the Riemann Problem}
\label{sec:transversality}

\subsection{Preliminaries on Transversality}
\label{ssec:transversality-prelim}

We begin by recalling the notion of transversality for submanifolds, following \cite{Hirsch,Pollack}.

\begin{definition}[Transversality of submanifolds]\label{def:transversality}
Let \(\mathcal{M}\) and \(\mathcal{N}\) be submanifolds of a manifold \(\mathcal{Y}\). We say that \(\mathcal{M}\) and \(\mathcal{N}\) are \textbf{transverse}, written \(\mathcal{M}\pitchfork\mathcal{N}\), if at every point \(x\in\mathcal{M}\cap\mathcal{N}\),
\begin{equation}\label{eq:transversality}
  T_x\mathcal{M} + T_x\mathcal{N} = T_x\mathcal{Y}.
\end{equation}
By convention, if \(\mathcal{M}\cap\mathcal{N}=\emptyset\), then the transversality condition is vacuously satisfied.
\end{definition}

The notion of transversality extends naturally from submanifolds to smooth maps. The following definition is used throughout the genericity arguments in Section~\ref{sec:structural-stability}.

\begin{definition}[Transversality of a map to a submanifold]\label{def:transversality-map}
Let \(f\colon\mathcal{X}\to\mathcal{Y}\) be a \(C^r\) map with \(r\ge 1\), where \(\mathcal{X}\) and \(\mathcal{Y}\) are \(C^r\) manifolds, and let \(\mathcal{Z}\) be a \(C^r\) submanifold of \(\mathcal{Y}\). We say that \(f\) is \textbf{transverse} to \(\mathcal{Z}\), written \(f\pitchfork\mathcal{Z}\), if for every \(a\in f^{-1}(\mathcal{Z})\),
\begin{equation}\label{eq:transversality-map}
  Df_a\bigl(T_a\mathcal{X}\bigr) + T_{f(a)}\mathcal{Z} = T_{f(a)}\mathcal{Y}.
\end{equation}
If \(f^{-1}(\mathcal{Z})=\emptyset\), the condition is vacuously satisfied.
When \(f\) is the inclusion of a submanifold \(\mathcal{M}\hookrightarrow\mathcal{Y}\), taking \(\mathcal{Z}=\mathcal{N}\) recovers \cref{def:transversality}.
\end{definition}

The following lemma provides a reduction of the block-structured Jacobians
that arise in the \(n\times n\) Riemann problem to a single \(n\times n\) determinant condition.

\begin{lemma}[Block lower-bidiagonal reduction]\label{lem:block-reduction}
Let \(n\ge 2\).  Consider an \(n^2\times n^2\) matrix \(M\) partitioned into \(n\) block rows of height \(n\),
\begin{equation}\label{eq:block-M}
  M =
  \begin{pmatrix}
    A_1    &        &        &          & a_1 &     &        &     &  \\[2pt]
    B_2    & A_2    &        &          &     & a_2 &        &     &  \\[2pt]
           & B_3    & \ddots &          &     &     & \ddots &     &  \\[2pt]
           &        & \ddots & A_{n-1}  &     &     &        & a_{n-1}&   \\[2pt]
           &        &        & B_n      &     &     &        &     & a_n
  \end{pmatrix},
\end{equation}
where \(A_1,\ldots,A_{n-1}\in\mathbb{R}^{n\times n}\) and \(B_2,\ldots,B_n\in\mathbb{R}^{n\times n}\)
are the ``state'' blocks, and \(a_1,\ldots,a_n\in\mathbb{R}^{n\times 1}\) are the ``speed'' columns.
The first \((n-1)n\) columns comprise a block lower-bidiagonal matrix with diagonal
entries \(A_1,\ldots,A_{n-1}\) and sub-diagonal entries \(B_2,\ldots,B_n\), while the last \(n\)
columns form \(\diag(a_1,\ldots,a_n)\).

If \(A_1,\ldots,A_{n-1}\) and \(B_n\) are all invertible, then
\begin{equation}\label{eq:det-block-factor}
  \det M = \biggl(\prod_{k=1}^{n-1}\det A_k\biggr)\cdot\det B_n\cdot\det T,
\end{equation}
where \(T\in\mathbb{R}^{n\times n}\) is the \emph{reduced matrix} obtained by forward block elimination of the state unknowns, normalized by \(B_n^{-1}\).
The columns of \(T\) are given explicitly as follows.
Define the \textbf{propagation matrices}
\begin{equation}\label{eq:propagation-matrix}
  P_m := -A_m^{-1}\,B_m, \qquad m=2,\ldots,n-1,
\end{equation}
and the \textbf{seed vectors}
\begin{equation}\label{eq:seed-vector}
  c_k := -A_k^{-1}\,a_k, \qquad k=1,\ldots,n-1.
\end{equation}
Then
\begin{equation}\label{eq:T-column-formula}
  T_k =
  \begin{cases}
    P_{n-1}\cdots P_{k+1}\,c_k & k=1,\ldots,n-2,\\[4pt]
    c_{n-1} & k=n-1,\\[4pt]
    B_n^{-1}a_n & k=n,
  \end{cases}
\end{equation}
In particular, \(M\) is invertible if and only if \(T\) is invertible.
\end{lemma}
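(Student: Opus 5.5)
The strategy is to carry out the forward block elimination explicitly as multiplication of $M$ by block-triangular matrices of known determinant. This reduces $M$ to a form whose determinant is read off as the product in \eqref{eq:det-block-factor}. Order the unknowns as $(x_1,\dots,x_{n-1};\sigma_1,\dots,\sigma_n)$ with $x_k\in\mathbb{R}^n$ and $\sigma_k\in\mathbb{R}$. Block row $k$ of $M$ then reads
\[
B_k x_{k-1}+A_k x_k+a_k\sigma_k \quad (k\le n-1,\ B_1:=0), \qquad B_n x_{n-1}+a_n\sigma_n \quad (k=n).
\]

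The first step is to right-multiply $M$ by a unipotent change of variables $E$ with $\det E=1$. It replaces each state unknown by its value forced by the first $n-1$ block equations. Concretely, $E$ substitutes
\[
x_k = y_k + \sum_{j\le k}\bigl(P_k P_{k-1}\cdots P_{j+1}\bigr)c_j\,\sigma_j .
\]
This is the solution of $A_kx_k+B_kx_{k-1}+a_k\sigma_k=0$, obtained recursively as $x_k=P_kx_{k-1}+c_k\sigma_k$ with $x_0=0$ and $P_1$ irrelevant.

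In the new variables, block rows $1,\dots,n-1$ become $B_k y_{k-1}+A_k y_k$, with no $\sigma$-dependence. Block row $n$ becomes
\[
B_n y_{n-1}+B_n\sum_{j\le n-1}\bigl(P_{n-1}\cdots P_{j+1}\bigr)c_j\sigma_j + a_n\sigma_n ,
\]
that is, $B_n y_{n-1}+B_nT\sigma$ with $T$ given exactly by \eqref{eq:T-column-formula}. Since $E$ is block upper triangular with identity diagonal blocks (the $\sigma$ columns only receive additions), $\det(ME)=\det M$.

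Next, $ME$ has the block form
\[
\begin{pmatrix} L & 0\\ \ast & B_nT\end{pmatrix}.
\]
Here $L$ is the $(n-1)n\times(n-1)n$ block lower-bidiagonal matrix with diagonal blocks $A_1,\dots,A_{n-1}$, so $\det L=\prod\det A_k$. The bottom-right block is $B_nT$, of size $n\times n$. Hence
\[
\det M=\det L\cdot\det(B_nT)=\Bigl(\prod\det A_k\Bigr)\det B_n\det T .
\]
The grouping of rows (the first $(n-1)n$ rows against the last $n$) and of columns (the $y$ against the $\sigma$) is consistent: the top-left block is square, and the top-right block vanishes because rows $1,\dots,n-1$ no longer involve $\sigma$. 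The final claim follows immediately, since the prefactors are nonzero by hypothesis.

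The main obstacle is purely bookkeeping. One must verify that the substitution indeed clears the $\sigma_j$ dependence from every row $k\le n-1$, including the cross terms coming from $B_kx_{k-1}$. The identity needed is
\[
A_kP_k\,(\cdot)+B_k(\cdot)=0 \quad\text{and}\quad A_kc_k+a_k=0,
\]
which holds by the definitions \eqref{eq:propagation-matrix}–\eqref{eq:seed-vector}. I would check this by induction on $k$. One must also check that the empty product convention gives $T_{n-1}=c_{n-1}$, and that $B_n^{-1}$ factored out of $B_nT$ yields the column $B_n^{-1}a_n$.
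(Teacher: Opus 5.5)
Your proof is correct. It reaches the factorization by the column-operation dual of the paper's argument.

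The paper uses block row operations. It left-multiplies $M$ by unipotent block lower-triangular matrices $E_1,\dots,E_{n-1}$, each subtracting $B_{k+1}A_k^{-1}$ times block row $k$ from block row $k+1$. Then $E_{n-1}\cdots E_1M$ is block upper-triangular with diagonal blocks $A_1,\dots,A_{n-1},B_nT$. The columns of $T$ are read off by following how the speed entries are carried down row by row, which the paper's ``continuation'' step does as an implicit induction.

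You instead apply a single unipotent column operation, the change of unknowns $x=y+Z\sigma$. This makes $ME$ block lower-triangular: the original bidiagonal $L$ sits in the corner with the interior $B_k$ left in place, and $B_nT$ appears in the other diagonal position. Both routes compute the same Schur complement $B_nT$ of the state block.

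Your route has two advantages. The only identities to check are $A_kP_k+B_k=0$ and $A_kc_k+a_k=0$. The columns of $T$ also appear directly as the coefficients of $\sigma_j$ in the solution $x_{n-1}$ of the first $n-1$ block equations. The recursion $x_k=P_kx_{k-1}+c_k\sigma_k$ is exactly the linearized forward chain, that is, the pushforward reading \eqref{eq:pushforward-def} that the paper attaches to $T$ only afterwards.

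The paper's row reduction is the more familiar textbook elimination, with explicit intermediate matrices. In both routes, invertibility of the $A_k$ is used only to form the elimination, and invertibility of $B_n$ only to normalize $B_nT$ to $T$.
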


\begin{proof}
We reduce \(M\) to block upper-triangular form by a sequence of \(n-1\) block elementary row operations~(EROs).

\textit{Step~1.}
Subtract \(B_2\,A_1^{-1}\) times block row~1 from block row~2.
In the state columns, this replaces \(B_2\) by \(B_2 - B_2\,A_1^{-1}\,A_1 = 0\).
In the speed columns, block row~2 acquires the entry
\(-B_2\,A_1^{-1}\,a_1 = B_2\,c_1\)
in speed column~1 (since \(c_1=-A_1^{-1}\,a_1\)),
while its original entry \(a_2\) in speed column~2 is unchanged.
All other block rows are untouched, so the matrix becomes
\begin{equation}\label{eq:M-after-step1}
  \begin{pmatrix}
    A_1 &        &        &          & a_1     &     &        &        &     \\[2pt]
    0   & A_2    &        &          & B_2c_1  & a_2 &        &        &     \\[2pt]
        & B_3    & \ddots &          &         &     & \ddots &        &     \\[2pt]
        &        & \ddots & A_{n-1}  &         &     &        & a_{n-1}&     \\[2pt]
        &        &        & B_n      &         &     &        &        & a_n
  \end{pmatrix}.
\end{equation}

\textit{Step~2.}
Subtract \(B_3\,A_2^{-1}\) times the (current) block row~2 from block row~3.
In the state columns, \(B_3\) is zeroed out.
In the speed columns, the factor \(-B_3\,A_2^{-1}\) acts on each entry of block row~2:
\begin{alignat*}{2}
  \text{speed column 1:}&\quad -B_3\,A_2^{-1}\cdot B_2\,c_1
    &&= B_3\,\underbrace{(-A_2^{-1}\,B_2)}_{P_2}\,c_1
    = B_3\,P_2\,c_1,\\
  \text{speed column 2:}&\quad -B_3\,A_2^{-1}\cdot a_2
    &&= B_3\,\underbrace{(-A_2^{-1}\,a_2)}_{c_2}
    = B_3\,c_2,
\end{alignat*}
while the original entry \(a_3\) in speed column~3 is unchanged.
The matrix is now
\begin{equation}\label{eq:M-after-step2}
  \begin{pmatrix}
    A_1 &     &        &          & a_1        &        &        &        &     \\[2pt]
    0   & A_2 &        &          & B_2c_1     & a_2    &        &        &     \\[2pt]
    0   & 0   & \ddots &          & B_3P_2c_1  & B_3c_2 & \ddots &        &     \\[2pt]
        &     & \ddots & A_{n-1}  &            &        &        & a_{n-1}&     \\[2pt]
        &     &        & B_n      &            &        &        &        & a_n
  \end{pmatrix}.
\end{equation}

\textit{Continuation.}
At each subsequent step~\(k\) (\(k=1,\ldots,n-1\)), we subtract \(B_{k+1}\,A_k^{-1}\) times block row~\(k\) from block row~\(k+1\).
Since \(M\) is block lower-bidiagonal, this only modifies block row~\(k+1\). The sub-diagonal entry \(B_{k+1}\) is zeroed out in the state columns, and the speed entries of block row~\(k\) are propagated down with the factor \(-B_{k+1}\,A_k^{-1}\).
At each step with $k\ge 2$, the propagation factor \(-A_k^{-1}\,B_k = P_k\) compounds the entries inherited from block row~$k$, while \(-A_k^{-1}\,a_k = c_k\) creates a new entry from the original speed column. At $k=1$ block row~1 carries only $a_1$, so nothing is inherited.
After all \(n-1\) steps, every sub-diagonal block \(B_2,\ldots,B_n\) has been eliminated, and the matrix takes the form
\begin{equation}\label{eq:M-eliminated}
  \begin{pmatrix}
    A_1 &        &        &          & a_1   & 0     & \cdots & 0      & 0   \\[2pt]
    0   & A_2    &        &          & *     & a_2   & \cdots & 0      & 0   \\[2pt]
        &        & \ddots &          & \vdots& \vdots& \ddots & \vdots & \vdots\\[2pt]
        &        &        & A_{n-1}  & *     & *     & \cdots & a_{n-1}& 0   \\[2pt]
    0   & 0      & \cdots & 0        & S_1   & S_2   & \cdots & S_{n-1}& S_n
  \end{pmatrix},
\end{equation}
where the \(*\) entries in block rows \(1,\ldots,n-1\) arise from the propagation but do not affect the determinant.
The bottom block row is entirely determined by the accumulated propagation, giving the speed-column entries
\begin{gather*}
  S_1 = B_n\,P_{n-1}\cdots P_2\,c_1,\quad
  S_2 = B_n\,P_{n-1}\cdots P_3\,c_2,\quad
  \ldots,\\
  S_{n-1} = B_n\,c_{n-1},\quad
  S_n = a_n,
\end{gather*}
that is, \(S_k = B_n\,T_k\) with \(T_k\) as in the column formula~\eqref{eq:T-column-formula}, the last column being \(a_n = B_n\ml B_n^{-1}a_n\mr\).

\textit{Determinant.}
Each block ERO at Step~\(k\) amounts to left-multiplying \(M\) by the block matrix \(E_k\) that equals the identity except for the off-diagonal block \(-B_{k+1}A_k^{-1}\) in position \((k+1,k)\).
Since \(E_k\) is block lower-triangular with identity blocks on the diagonal, \(\det E_k = 1\), so the ERO preserves the determinant.
(Note that only the invertibility of \(A_k\) is required to form \(E_k\); no assumption on \(B_{k+1}\) is needed.) After all \(n-1\) EROs, the matrix \(E_{n-1}\cdots E_1\,M\) is block upper-triangular with diagonal blocks \(A_1,\ldots,A_{n-1},B_n\,T\).
The determinant of a block-triangular matrix equals the product of the determinants of its diagonal blocks, so
\[
  \det M = \det(E_{n-1}\cdots E_1\,M) = \biggl(\prod_{k=1}^{n-1}\det A_k\biggr)\cdot\det B_n\cdot\det T,
\]
and, since \(\det B_n\neq 0\), \(\det M\neq 0\) if and only if \(\det T\neq 0\).
\end{proof}

In the Riemann problem, the domain of the objective map depends on the left and right states \((u_l,u_r)\), so the standard parametric transversality theorem (which assumes a fixed domain) does not directly apply. The following variant accommodates parameter-dependent domains and is used in the proof of the genericity theorem in Section~\ref{sec:structural-stability}.

\begin{theorem}[Foliated parametric transversality]\label{thm:foliated-parametric-transversality}
Let \(\mathcal{P}\) and \(\mathcal{Y}\) be \(C^r\) manifolds, and let \(\mathcal{Z}\) be a \(C^r\) submanifold of \(\mathcal{Y}\).
Suppose that for each \(p\in\mathcal{P}\) we are given a \(C^r\) manifold \(\mathcal{X}_p\) of common dimension \(\dim\mathcal{X}\), and define the foliated set
\[
  \mathcal{XP} := \bigcup_{p\in\mathcal{P}} \mathcal{X}_p \times \{p\}.
\]
Consider a map \(\mathbf{F}\colon\mathcal{XP}\to\mathcal{Y}\) and, for each parameter $p\in\mathcal{P}$, the associated map \(\mathbf{F}_p\colon\mathcal{X}_p\to\mathcal{Y}\) given by $\mathbf{F}_p(x):=\mathbf{F}(x,p)$.
Suppose that
\begin{enumerate}[leftmargin=*]
\item \(r > \max\{0,\,\dim\mathcal{X}+\dim\mathcal{Z}-\dim\mathcal{Y}\}\),
\item \(\mathcal{XP}\) is a \(C^r\) manifold with \(\dim\mathcal{XP}=\dim\mathcal{X}+\dim\mathcal{P}\),
\item \(T_{(x,p)}\mathcal{XP}\cong T_x\mathcal{X}_p\times T_p\mathcal{P}\) for each \((x,p)\in\mathcal{XP}\),
\item the map \((x,p)\mapsto \mathbf{F}_p(x)\) is \(C^r\), and
\item \(\mathbf{F}\pitchfork\mathcal{Z}\).
\end{enumerate}
Then, for almost every \(p\in\mathcal{P}\),\footnote{Here, ``almost every'' is understood in the sense that the set of \(p\) such that \(\mathbf{F}_p\pitchfork\mathcal{Z}\) fails is of measure zero in \(\mathcal{P}\).} \(\mathbf{F}_p\pitchfork\mathcal{Z}\).
\end{theorem}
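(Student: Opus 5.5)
The plan is to reduce to the classical proof of the parametric transversality theorem: pull the submanifold back under \(\mathbf{F}\), then apply Sard's theorem to a projection.

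\emph{Step 1 (pullback).} By hypotheses (2), (4), (5), \(\mathbf{F}\colon\mathcal{XP}\to\mathcal{Y}\) is a \(C^r\) map on a \(C^r\) manifold and is transverse to \(\mathcal{Z}\). The preimage theorem for transverse maps (locally, compose \(\mathbf{F}\) with a submersion \(\mathcal{Y}\supset V\to\mathbb{R}^{\operatorname{codim}\mathcal{Z}}\) cutting out \(\mathcal{Z}\), and apply the Regular Value Theorem~\ref{thm:rvt}) then shows that
\[
\mathcal{W}:=\mathbf{F}^{-1}(\mathcal{Z})
\]
is a \(C^r\) submanifold of \(\mathcal{XP}\). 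Its dimension is \(\dim\mathcal{X}+\dim\mathcal{P}-(\dim\mathcal{Y}-\dim\mathcal{Z})\), and its tangent space is \(T_{(x,p)}\mathcal{W}=D\mathbf{F}_{(x,p)}^{-1}(T\mathcal{Z})\).

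\emph{Step 2 (Sard).} Let \(\rho\colon\mathcal{W}\to\mathcal{P}\) be the restriction of the projection \((x,p)\mapsto p\). This map is \(C^r\) because \(\mathcal{XP}\) carries a \(C^r\) structure compatible with the splitting in hypothesis (3). We have
\[
\dim\mathcal{W}-\dim\mathcal{P}=\dim\mathcal{X}+\dim\mathcal{Z}-\dim\mathcal{Y},
\]
so hypothesis (1) is exactly the smoothness condition \(r>\max\{0,\dim\mathcal{W}-\dim\mathcal{P}\}\) in Sard's theorem. Hence the set of critical values of \(\rho\) has measure zero in \(\mathcal{P}\).

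\emph{Step 3 (linear algebra).} It remains to show that if \(p\) is a regular value of \(\rho\), then \(\mathbf{F}_p\pitchfork\mathcal{Z}\). This is the step where the foliated structure matters, and I expect it to be the main obstacle. Hypothesis (3) is what lets us write \(T_{(x,p)}\mathcal{XP}=T_x\mathcal{X}_p\oplus T_p\mathcal{P}\) with \(T_x\mathcal{X}_p\) equal to the kernel of \(D\rho\) extended to \(\mathcal{XP}\). Consequently \(D\mathbf{F}\) restricted to \(T_x\mathcal{X}_p\) agrees with \(D(\mathbf{F}_p)_x\). I would first check carefully that the identification in (3) is induced by the inclusion \(\mathcal{X}_p\hookrightarrow\mathcal{XP}\) and the projection, since only then does this restriction claim hold.

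Now fix \(x\) with \(\mathbf{F}_p(x)=z\in\mathcal{Z}\) and take \(w\in T_z\mathcal{Y}\).
\begin{enumerate}
\item By hypothesis (5), write \(w=D\mathbf{F}(a,b)+\zeta\) with \(a\in T_x\mathcal{X}_p\), \(b\in T_p\mathcal{P}\), and \(\zeta\in T_z\mathcal{Z}\).
\item Since \(p\) is a regular value of \(\rho\), there is \((a',b)\in T_{(x,p)}\mathcal{W}\), so \(D\mathbf{F}(a',b)\in T_z\mathcal{Z}\).
\item Subtracting gives \(w=D(\mathbf{F}_p)_x(a-a')+\zeta'\) with \(\zeta'\in T_z\mathcal{Z}\).
\end{enumerate}
This is exactly the transversality condition~\eqref{eq:transversality-map} for \(\mathbf{F}_p\) at \(x\).

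Combining the steps, the set of \(p\) at which \(\mathbf{F}_p\pitchfork\mathcal{Z}\) fails is contained in the critical-value set of \(\rho\), which has measure zero. If \(\mathcal{P}\) or \(\mathcal{W}\) is not second countable, I would cover by countably many charts and use countable additivity of null sets.
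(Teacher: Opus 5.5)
Your proposal is correct and matches the paper's approach. The paper gives no new proof: it says the standard parametric transversality argument goes through with hypotheses (2) and (3) replacing the product structure $\mathcal{X}\times\mathcal{P}$, and refers to \cite{TanBertozzi2x2} for details; that argument is exactly your three steps (the transverse preimage $\mathbf{F}^{-1}(\mathcal{Z})$ is a submanifold, Sard applied to its projection to $\mathcal{P}$, then the linear-algebra step at regular values). Your reading of (3) as the splitting induced by the fiber inclusion and the projection is also the one the paper uses when it verifies (3) in Step~2 of the proof of \cref{thm:genericity}, where $\mathcal{XP}$ is open in $\mathbb{R}^{n^2}\times\mathbb{R}^{2n}$.
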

The proof mimics that of the standard parametric transversality theorem, using assumptions~(2) and~(3) to replace arguments involving the Cartesian product structure \(\mathcal{X}\times\mathcal{P}\) with the foliated set \(\mathcal{XP}\); see \cite{TanBertozzi2x2} for full details.

\subsection{Transversality and the Riemann problem for \texorpdfstring{$2\times 2$}{2x2} systems}
\label{ssec:transversality-2x2}

For a \(2\times 2\) system, the Riemann problem reduces to finding a single intermediate state \(u_1^*\) in the intersection of the forward 1-wave curve from \(u_l\) and the backward 2-wave curve from \(u_r\) (see \cref{rem:backward-curves}).
A solution persists under small perturbations of \(u_l\) and \(u_r\) whenever this intersection is \emph{transverse}, meaning that the tangent vectors to the two projected wave curves at \(u_1^*\) span \(T_{u_1^*}U\cong\mathbb{R}^2\).
Our prior paper \cite{TanBertozzi2x2} established structural stability for \(2\times 2\) systems by verifying this geometric condition directly in the projected state space.
In this section, we reformulate the transversality condition in the lifted-space framework developed in Sections~\ref{ssec:hugoniot} and~\ref{ssec:rarefaction}, and show that it recovers the classical projected-space condition as a consequence.

We illustrate the argument with the shock--rarefaction configuration below. Recall that the Riemann solution
\[
  u_l \xrightarrow{\;s_1^*\;} u_1^*
       \xrightarrow{\;\xi\in[\lambda_2(u_1^*),\,\lambda_2(u_r)]\;} u_r
\]
consists of a 1-shock connecting \(u_l\) to \(u_1^*\) at speed \(s_1^*\), followed by a 2-rarefaction connecting \(u_1^*\) to \(u_r\).
In the lifted state space (\cref{fig:shock-rare-example}, top-right panel), the staircase trajectory \textcircled{\raisebox{-0.5pt}{\scriptsize 1}}--\textcircled{\raisebox{-0.5pt}{\scriptsize 5}} encodes the full solution.
The ``active'' steps that determine \(u_1^*\) are \textcircled{\raisebox{-0.5pt}{\scriptsize 2}}--\textcircled{\raisebox{-0.5pt}{\scriptsize 4}}, which respectively trace the shock jump along the lifted 1-Hugoniot, the speed gap from \(s_1^*\) to \(\lambda_2(u_1^*)\) at the intermediate state, and the 2-rarefaction curve.

\subsubsection{The objective map}

The conditions that define \(u_1^*\) can be written as a single system of equations in the lifted variables \((s_1^*,u_1^*,\lambda_2^*)\in\mathbb{R}\times (U\setminus \{u_l,u_r\}) \times\mathbb{R}\), where the exclusion \(u_1^*\neq u_l\) and \(u_1^*\neq u_r\) ensures that each wave is non-degenerate. The 1-shock condition requires that \(u_1^*\) lie on the Hugoniot locus of \(u_l\) at speed \(s_1^*\), and the 2-rarefaction condition requires that \(u_1^*\) lie on the backward 2-rarefaction curve from \(u_r\) at eigenvalue parameter \(\lambda_2^*=\lambda_2(u_1^*)\).
These correspond to the zero set of the combined \emph{objective map}
\begin{equation}\label{eq:objective-2x2}
  J(s_1^*,u_1^*,\lambda_2^*)
  :=
  \begin{pmatrix}
    H(u_1^*,s_1^*;\,u_l)\\[4pt]
    R_2(u_1^*,\lambda_2^*;\,u_r)
  \end{pmatrix}
  \in\mathbb{R}^{2n} = \mathbb{R}^4
\end{equation}
where \(H\) is the Rankine--Hugoniot map \eqref{eq:RH-map} and \(R_2\) is the rarefaction map (\cref{def:rarefaction-map}).
Since \(n=2\), the unknown \((s_1^*,u_1^*,\lambda_2^*)\in\mathbb{R}\times\mathbb{R}^2\times\mathbb{R}=\mathbb{R}^4\) has the same dimension as the target.
The Riemann solution corresponds to a zero of \(J\), and by the implicit function theorem, structural stability holds whenever the Jacobian
\begin{equation}\label{eq:jacobian-2x2}
  D_{(s_1^*,\,u_1^*,\,\lambda_2^*)}J
\end{equation}
is invertible at the solution. This Jacobian inherits a block structure from the two components of \(J\). To compute this, we first differentiate the Hugoniot component \(H(u_1^*,s_1^*;\,u_l)\) with respect to \((s_1^*,u_1^*,\lambda_2^*)\), and using the Jacobian formula~\eqref{eq:jacobian}, this yields the first row of blocks
\[
  D_{(s_1^*,\,u_1^*,\,\lambda_2^*)}H
  =
  \bigl[\;
    -(u_1^*-u_l)
    \;\bigm|\;
    DF(u_1^*)-s_1^*I_n
    \;\bigm|\;
    0
  \;\bigr],
\]
where the zero block appears because \(H\) does not depend on \(\lambda_2^*\).
For the rarefaction component \(R_2(u_1^*,\lambda_2^*;\,u_r)=u_1^*-\Gamma_2(\lambda_2^*;\,u_r)\), the Jacobian formula~\eqref{eq:rarefaction-jacobian} gives
\[
  D_{(s_1^*,\,u_1^*,\,\lambda_2^*)}R_2
  =
  \bigl[\;
    0
    \;\bigm|\;
    I_n
    \;\bigm|\;
    -\Gamma_2'(\lambda_2^*;\,u_r)
  \;\bigr],
\]
where \(\Gamma_2'(\lambda_2^*;\,u_r)\) denotes the derivative of \(\Gamma_2(\cdot\,;u_r)\) with respect to the eigenvalue parameter. By the rarefaction ODE~\eqref{eq:Gamma-xi-ODE}, this tangent vector is a nonzero scalar multiple of the right eigenvector \(r_2(u_1^*)\), so we write \(\Gamma_2'(\lambda_2^*;\,u_r)=\alpha_2\,r_2(u_1^*)\) with \(\alpha_2\neq 0\).
Assembling the two rows, the full Jacobian is given by
\begin{equation}\label{eq:jacobian-2x2-block}
  D_{(s_1^*,\,u_1^*,\,\lambda_2^*)}J
  =
  \begin{pmatrix}
    -(u_1^*-u_l) & DF(u_1^*)-s_1^*I_n & 0\\[4pt]
    0 & I_n & -\alpha_2\,r_2(u_1^*)
  \end{pmatrix},
\end{equation}
in which the block sizes are \((u_1^*-u_l)\in\mathbb{R}^{2\times 1}\), \(DF(u_1^*)-s_1^*I_n\in\mathbb{R}^{2\times 2}\), \(I_n\in\mathbb{R}^{2\times 2}\), and \(\alpha_2\,r_2(u_1^*)\in\mathbb{R}^{2\times 1}\).

\subsubsection{Transversality of the wave curves}

We now reduce the invertibility of the \(4\times 4\) matrix~\eqref{eq:jacobian-2x2-block} to a condition on the projected wave curves.
The matrix \(DF(u_1^*)-s_1^*I_n\) is invertible because the Lax entropy condition~\eqref{eq:lax-entropy} ensures that \(s_1^*\) is not an eigenvalue of \(DF(u_1^*)\).
Subtracting $(DF(u_1^*)-s_1^*I_n)^{-1}$ times the first row of blocks from the second row is the elimination step of \cref{lem:block-reduction}, with the identity block in the role of the sub-diagonal block $B_2$. This row operation preserves the determinant and zeroes out the identity block, while the zero block in the first column of the second row acquires the entry $-(DF(u_1^*)-s_1^*I_n)^{-1}\cdot\bigl(-(u_1^*-u_l)\bigr) = (DF(u_1^*)-s_1^*I_n)^{-1}(u_1^*-u_l)$, so that the Jacobian~\eqref{eq:jacobian-2x2-block} becomes
\[
  \begin{pmatrix}
    -(u_1^*-u_l) & DF(u_1^*)-s_1^*I_n & 0\\[4pt]
    \bigl(DF(u_1^*)-s_1^*I_n\bigr)^{-1}(u_1^*-u_l) & 0 & -\alpha_2\,r_2(u_1^*)
  \end{pmatrix}.
\]
Moving the two middle columns in front of the first is an even permutation of the columns (two transpositions), which leaves the determinant unchanged and yields a block upper-triangular matrix with $DF(u_1^*)-s_1^*I_n$ in the first diagonal block. The determinant of~\eqref{eq:jacobian-2x2-block} therefore equals $\det(DF(u_1^*)-s_1^*I_n)\cdot\det T$, where $T$ is the remaining diagonal block,
\begin{equation}\label{eq:transversality-det-2x2}
  T :=
  \Bigl[\;
    \bigl(DF(u_1^*)-s_1^*I_n\bigr)^{-1}(u_1^*-u_l)
    \;\Bigm|\;
    -\alpha_2\,r_2(u_1^*)
  \;\Bigr]
  \in\mathbb{R}^{2\times 2}.
\end{equation}
The Jacobian~\eqref{eq:jacobian-2x2-block} is therefore invertible if and only if
\begin{equation}\label{eq:transversality-condition-2x2}
  \det T \neq 0.
\end{equation}

The two columns of $T$ span the tangent lines to the projected wave curves at $u_1^*$.
For the first column, recall from~\eqref{eq:jacobian} that a tangent vector \((\dot{u},\dot{s})\) to the lifted Hugoniot curve at \((u_1^*,s_1^*)\) satisfies
\[
  \bigl[\,DF(u_1^*)-s_1^*I_n\;\bigm|\;-(u_1^*-u_l)\,\bigr]
  \begin{pmatrix}\dot{u}\\ \dot{s}\end{pmatrix}
  = 0,
\]
which gives \(\dot{u}=\bigl(DF(u_1^*)-s_1^*I_n\bigr)^{-1}(u_1^*-u_l)\,\dot{s}\).
The projection \(\pi(u,s)=u\) sends this to \(\dot{u}\), so the first column of \(T\) spans the tangent line to the projected Hugoniot locus \(\mathcal{H}_{u_l}\) from \(u_l\) at \(u_1^*\).
In the staircase picture of \cref{fig:shock-rare-example}, this is the projection of step~\textcircled{\raisebox{-0.5pt}{\scriptsize 2}} into the state space.
The second column $-\alpha_2\,r_2(u_1^*)$, a nonzero multiple of $r_2(u_1^*)$, spans the tangent line to the projected 2-rarefaction curve \(\mathcal{R}_{2,u_r}\) at \(u_1^*\), corresponding to the projection of step~\textcircled{\raisebox{-0.5pt}{\scriptsize 4}}.

Since each column of \(T\) spans the respective tangent space \(T_{u_1^*}\mathcal{W}^f_{1,u_l}\) and \(T_{u_1^*}\mathcal{W}^b_{2,u_r}\) (both one-dimensional), condition~\eqref{eq:transversality-condition-2x2} is equivalent to
\begin{equation}\label{eq:wave-curve-transversality-2x2}
  T_{u_1^*}\mathcal{W}^f_{1,u_l} + T_{u_1^*}\mathcal{W}^b_{2,u_r} = T_{u_1^*}U \cong \mathbb{R}^2,
\end{equation}
which, by \cref{def:transversality}, is the transversality condition \(\mathcal{W}^f_{1,u_l}\pitchfork\mathcal{W}^b_{2,u_r}\) at \(u_1^*\).
We have therefore shown that invertibility of the objective-map Jacobian~\eqref{eq:jacobian-2x2} is equivalent to transversality of the forward and backward wave curves at the intermediate state, and hence structural stability of the Riemann solution follows from the implicit function theorem whenever this transversality holds.

\begin{remark}[Other wave-type combinations]\label{rem:other-wave-types}
The shock--rarefaction case was chosen for concreteness, but the same framework applies to all four wave-type combinations (double shock, double rarefaction, shock--rarefaction, rarefaction--shock).
In each case, one replaces the appropriate component of the objective map~\eqref{eq:objective-2x2} with the Hugoniot or rarefaction map, and the resulting Jacobian has an analogous block structure.
The reduction to a \(2\times 2\) transversality determinant proceeds identically, with the columns of \(T\) given by the projected tangent vectors of whichever wave curves are involved.
The general treatment of mixed wave-type configurations, including the explicit rarefaction blocks, is given in Section~\ref{ssec:transversality-nxn}.
\end{remark}

\subsection{Sequential transversality and the Riemann problem for \texorpdfstring{$3\times 3$}{3x3} systems}
\label{ssec:transversality-3x3}

In the \(2\times 2\) case studied in Section~\ref{ssec:transversality-2x2}, structural stability reduces to transversality of two one-dimensional wave curves in \(\mathbb{R}^2\), whose codimensions sum to $1+1=2=\dim\mathbb{R}^2$, so that two such curves generically meet in isolated points.
For a \(3\times 3\) system, however, the analogous wave curves are still one-dimensional, but the ambient state space is \(\mathbb{R}^3\).
Two one-dimensional curves in \(\mathbb{R}^3\) generically miss each other entirely, since their codimensions sum to \(2+2=4>3\).
In particular, the pairwise intersection-based viewpoint of Section~\ref{ssec:transversality-2x2} does not extend directly.

A new approach is required, one that accounts for all three waves simultaneously.
The key idea is to chain the waves sequentially, solving forward from \(u_l\) through the intermediate states \(u_1^*,u_2^*\) to \(u_r\), and then to transport all tangent contributions to a common reference point via pushforward maps.
We develop this approach using the triple-shock configuration of Section~\ref{sssec:triple-shock} as a concrete illustration.
The resulting \emph{sequential transversality} condition replaces the single intersection test of the \(2\times 2\) theory with a linear independence condition on the pushforward-transported tangent vectors.

\subsubsection{The objective map}

Recall from Section~\ref{sssec:triple-shock} that the triple-shock Riemann solution
\[
  u_l \xrightarrow{\;s_1^*\;} u_1^*
       \xrightarrow{\;s_2^*\;} u_2^*
       \xrightarrow{\;s_3^*\;} u_r
\]
is determined by the unknowns \((s_1^*,u_1^*,s_2^*,u_2^*,s_3^*)\in\mathbb{R}\times U\times\mathbb{R}\times U\times\mathbb{R}\),
where we assume each wave is non-degenerate, i.e., \(u_l\neq u_1^*\neq u_2^*\neq u_r\).
With \(n=3\), the total number of scalar unknowns is \(1+3+1+3+1=9\).
The three Rankine--Hugoniot conditions \(H(u_1^*,s_1^*;\,u_l)=0\), \(H(u_2^*,s_2^*;\,u_1^*)=0\), and \(H(u_2^*,s_3^*;\,u_r)=0\) comprise \(3n=9\) scalar equations, so that the system is square.
The first two conditions are anchored at the known previous state, following the solution forward from $u_l$; the third is anchored at the right state, matching the backward 3-Hugoniot $\mathcal{H}^+_{3,u_r}$ of Section~\ref{sssec:triple-shock}, which branches out from $u_r$ and meets the forward sweep at $u_2^*$. This two-sided anchoring, with the first row launched from $u_l$ and the last row from $u_r$, is already the convention of the $2\times 2$ objective~\eqref{eq:objective-2x2}, whose second component is the backward rarefaction condition anchored at $u_r$. By the antisymmetry $H(u_2^*,s_3^*;\,u_r)=-H(u_r,s_3^*;\,u_2^*)$ of the Rankine--Hugoniot map in its two states, this choice has the same zero set as the forward-anchored condition; \cref{rem:backward-anchoring} explains why it is nevertheless the right one.
These conditions are captured by the \emph{objective map}
\begin{equation}\label{eq:objective-3x3}
  J(s_1^*,u_1^*,s_2^*,u_2^*,s_3^*)
  :=
  \begin{pmatrix}
    H(u_1^*,s_1^*;\,u_l)\\[4pt]
    H(u_2^*,s_2^*;\,u_1^*)\\[4pt]
    H(u_2^*,s_3^*;\,u_r)
  \end{pmatrix}
  \in\mathbb{R}^{9}.
\end{equation}
A solution of the Riemann problem corresponds to a zero of \(J\), and by the implicit function theorem, structural stability holds whenever the Jacobian
\begin{equation}\label{eq:jacobian-3x3}
  D_{(s_1^*,\,u_1^*,\,s_2^*,\,u_2^*,\,s_3^*)}J
\end{equation}
is invertible at the solution.

To expose the block structure, we reorder the unknowns as \((u_1^*,u_2^*,s_1^*,s_2^*,s_3^*)\), placing the \(n\)-dimensional intermediate states first and the one-dimensional speeds second.
Using the Jacobian formula~\eqref{eq:jacobian} to differentiate each row of~\eqref{eq:objective-3x3}, we obtain
\begin{equation}\label{eq:jacobian-3x3-block}
\begin{aligned}
  &\; DJ(s_1^*, u_1^*, s_2^*, u_2^*, s_3^*) = \\
  &\begin{pmatrix}
       DF(u_1^*)-s_1^*I_n & 0 & -(u_1^*-u_l) & 0 & 0 \\[4pt]
       -(DF(u_1^*)-s_2^*I_n) & DF(u_2^*)-s_2^*I_n & 0 & -(u_2^*-u_1^*) & 0 \\[4pt]
       0 & DF(u_2^*)-s_3^*I_n & 0 & 0 & u_r-u_2^*
     \end{pmatrix}.
\end{aligned}
\end{equation}
Each \(DF-sI_n\) block is \(3\times 3\) and each state-difference column is \(3\times 1\), giving a \(9\times 9\) matrix overall.
This is an instance of the block lower-bidiagonal structure of \cref{lem:block-reduction} with \(n=3\), where
\begin{alignat*}{2}
  A_1 &= DF(u_1^*)-s_1^*I_n, &\qquad
  A_2 &= DF(u_2^*)-s_2^*I_n, \\
  B_2 &= -(DF(u_1^*)-s_2^*I_n), &\qquad
  B_3 &= DF(u_2^*)-s_3^*I_n,
\end{alignat*}
and the speed columns are \(a_k = -(u^{(k)}-u^{(k-1)})\) for $k=1,2$, while the backward-anchored third row contributes $a_3 = u_r-u_2^*$.

The diagonal blocks \(A_1\) and \(A_2\) are invertible because the Lax entropy condition~\eqref{eq:lax-entropy} ensures that \(s_k^*\) is not an eigenvalue of \(DF\) at the intermediate state \(u_k^*\).
We claim that the off-diagonal blocks \(B_2\) and \(B_3\) are likewise invertible.
For \(B_2 = -(DF(u_1^*)-s_2^*I_n)\), it suffices to show that \(s_2^*\) is not an eigenvalue of \(DF(u_1^*)\).
The Lax condition for the 1-shock gives \(s_1^*>\lambda_1(u_1^*)\), and for the 2-shock gives \(\lambda_2(u_1^*)>s_2^*\).
Combined with the speed ordering \(s_2^*>s_1^*\) from~\eqref{eq:triple-shock-ordering}, this yields
\[
  \lambda_1(u_1^*) < s_1^* < s_2^* < \lambda_2(u_1^*).
\]
By strict hyperbolicity, \(\lambda_1(u_1^*)<\lambda_2(u_1^*)<\lambda_3(u_1^*)\), so \(s_2^*\) lies strictly between consecutive eigenvalues at \(u_1^*\) and hence avoids all three, making \(DF(u_1^*)-s_2^*I_n\) nonsingular.
The same argument applies to \(B_3 = DF(u_2^*)-s_3^*I_n\), with the Lax conditions placing \(s_3^*\) strictly between \(\lambda_2(u_2^*)\) and \(\lambda_3(u_2^*)\); the sign convention of $B_3$ differs from that of $B_2$ only because the third row is anchored backward.

By \cref{lem:block-reduction}, the determinant factors as \(\det A_1\cdot\det A_2\cdot\det B_3\cdot\det T\), where the transversality matrix \(T\in\mathbb{R}^{3\times 3}\) has columns given by the formula~\eqref{eq:T-column-formula}.
The single propagation matrix is given by
\begin{equation}\label{eq:pushforward-hugoniot}
  P_2 = -A_2^{-1}B_2 = \bigl(DF(u_2^*)-s_2^*I_n\bigr)^{-1}\bigl(DF(u_1^*)-s_2^*I_n\bigr) =: \Phi_{12},
\end{equation}
which we call the \textbf{pushforward} from \(u_1^*\) to \(u_2^*\) across the 2-shock.
The seed vectors are \(c_k = -A_k^{-1}a_k = A_k^{-1}(u^{(k)}-u^{(k-1)})\).
Applying the column formula~\eqref{eq:T-column-formula} gives
\begin{equation}\label{eq:transversality-normalized-3x3}
  T =
  \Bigl[\;
    \Phi_{12}\,A_1^{-1}(u_1^*-u_l)
    \;\Bigm|\;
    A_2^{-1}(u_2^*-u_1^*)
    \;\Bigm|\;
    (DF(u_2^*)-s_3^*I_n)^{-1}(u_r-u_2^*)
  \;\Bigr].
\end{equation}
Invertibility of the full Jacobian is therefore equivalent to the \emph{transversality condition}
\begin{equation}\label{eq:transversality-condition-3x3}
  \det T \neq 0.
\end{equation}

\subsubsection{Sequential transversality of wave curves}

We now give geometric meaning to the columns of \(T\) and to the pushforward map \(\Phi_{12}\).

\paragraph{Geometric identification of the columns}
All three columns of \(T\) in~\eqref{eq:transversality-normalized-3x3} are tangent vectors at the common base point \(u_2^*\). To deduce this, we argue as follows:
\begin{itemize}[leftmargin=*]
\item The first column is \(\Phi_{12}\,A_1^{-1}(u_1^*-u_l)\).
The vector \(v_1 := A_1^{-1}(u_1^*-u_l) = \bigl(DF(u_1^*)-s_1^*I_n\bigr)^{-1}(u_1^*-u_l)\) is the tangent to the projected Hugoniot locus from \(u_l\) at \(u_1^*\) (the projection of step~\textcircled{\raisebox{-0.5pt}{\scriptsize 2}} in \cref{fig:triple-shock-example}), as in Section~\ref{ssec:transversality-2x2}.
The pushforward \(\Phi_{12}\) transports this tangent from \(u_1^*\) to \(u_2^*\).
\item The second column is \(v_2 := A_2^{-1}(u_2^*-u_1^*) = \bigl(DF(u_2^*)-s_2^*I_n\bigr)^{-1}(u_2^*-u_1^*)\), the tangent to the projected Hugoniot locus from \(u_1^*\) at \(u_2^*\) (the projection of step~\textcircled{\raisebox{-0.5pt}{\scriptsize 4}}).
\item The third column is \(v_3 := \bigl(DF(u_2^*)-s_3^*I_n\bigr)^{-1}(u_r-u_2^*) = B_3^{-1}a_3\). Since the third wave condition is anchored at $u_r$, the Jacobian formula~\eqref{eq:jacobian} shows that a tangent vector $(\dot{u}_2,\dot{s}_3)$ to the lifted Hugoniot curve $\widehat{\mathcal{H}}_{u_r}$ at $(u_2^*,s_3^*)$ satisfies $B_3\,\dot{u}_2 + a_3\,\dot{s}_3 = 0$, which gives $\dot{u}_2 = -v_3\,\dot{s}_3$. Projecting to state space, $v_3$ spans the tangent line at $u_2^*$ to the projected backward 3-Hugoniot $\mathcal{H}^+_{3,u_r}$ (the projection of step~\textcircled{\raisebox{-0.5pt}{\scriptsize 6}} in \cref{fig:triple-shock-example}; see also \cref{rem:backward-anchoring}).
\end{itemize}

\begin{figure}[htbp]
  \centering
  \begin{tikzpicture}[>=Stealth, scale=1.0,
      every node/.style={font=\small}]


    \coordinate (ul) at (-5.5,1.5);
    \coordinate (u1) at (-1.2,1.0);
    \coordinate (u2) at (1.2,-1.5);
    \coordinate (ur) at (5.0,3.0);

    \coordinate (sp1) at (-4.5,1.4);
    \coordinate (sp2) at (-3.5,1.3);
    \coordinate (sp3) at (-2.3,1.15);
    \coordinate (sp5) at (-0.2,0.85);
    \coordinate (sp6) at (0.5,0.75);
    \coordinate (sp7) at (1.2,0.65);


    \draw[blue!35]
      (sp1) .. controls ($(sp1)+(0.5,-0.6)$) and ($(sp1)+(0.8,-1.8)$) .. ($(sp1)+(1.0,-2.8)$);

    \coordinate (spA) at (-4.0,1.35);
    \draw[blue!30]
      (spA) .. controls ($(spA)+(0.55,-0.6)$) and ($(spA)+(0.9,-1.8)$) .. ($(spA)+(1.1,-2.8)$);

    \draw[blue!38]
      (sp2) .. controls ($(sp2)+(0.6,-0.6)$) and ($(sp2)+(1.0,-1.8)$) .. ($(sp2)+(1.3,-2.8)$);

    \coordinate (spB) at (-2.9,1.22);
    \draw[blue!32]
      (spB) .. controls ($(spB)+(0.65,-0.6)$) and ($(spB)+(1.1,-1.8)$) .. ($(spB)+(1.4,-2.8)$);

    \draw[blue!38]
      (sp3) .. controls ($(sp3)+(0.7,-0.6)$) and ($(sp3)+(1.3,-1.8)$) .. ($(sp3)+(1.7,-2.8)$);


    \draw[blue!35]
      (sp5) .. controls ($(sp5)+(0.8,-0.5)$) and ($(sp5)+(1.8,-1.6)$) .. ($(sp5)+(2.5,-2.6)$);

    \draw[blue!32]
      (sp6) .. controls ($(sp6)+(0.9,-0.5)$) and ($(sp6)+(2.0,-1.5)$) .. ($(sp6)+(2.8,-2.4)$);

    \draw[blue!28]
      (sp7) .. controls ($(sp7)+(1.0,-0.4)$) and ($(sp7)+(2.2,-1.3)$) .. ($(sp7)+(3.0,-2.2)$);

    \coordinate (spC) at (1.7,0.58);
    \draw[blue!22]
      (spC) .. controls ($(spC)+(1.0,-0.35)$) and ($(spC)+(2.3,-1.1)$) .. ($(spC)+(3.1,-1.8)$);


    \draw[thick]
      (ul) .. controls (-5.0,1.5) and (-4.0,1.35) .. (sp2)
           .. controls (-3.0,1.25) and (-2.0,1.1) .. (u1)
           .. controls (-0.6,0.9) and (0.2,0.78) .. (sp6)
           .. controls (1.0,0.68) and (1.5,0.62) .. (2.2,0.55);

    \draw[thick, blue]
      ($(u1)+(-0.35,0.35)$)
      .. controls ($(u1)+(-0.15,0.15)$) and ($(u1)+(-0.0,0.0)$) .. (u1)
      .. controls ($(u1)+(0.6,-0.6)$) and ($(u2)+(-0.4,0.5)$) .. (u2)
      .. controls ($(u2)+(0.4,-0.5)$) and ($(u2)+(0.8,-0.9)$) .. ($(u2)+(1.1,-1.2)$);

    \draw[thick, red!70!black]
      (ur) .. controls (4.2,2.5) and (2.5,0.0) .. (u2)
           .. controls (0.2,-2.7) and (-0.8,-3.2) .. (-1.5,-3.4);


    \draw[fill=white, thick] (ul) circle (2.5pt);
    \node[above] at (ul) {$u_l$};

    \draw[thick] (u1) +(-3.5pt,-3.5pt) -- +(3.5pt,3.5pt)
                 (u1) +(-3.5pt,3.5pt) -- +(3.5pt,-3.5pt);
    \node[above] at ($(u1)+(0.0,0.15)$) {$u_1^*$};

    \draw[thick] (u2) +(-3.5pt,-3.5pt) -- +(3.5pt,3.5pt)
                 (u2) +(-3.5pt,3.5pt) -- +(3.5pt,-3.5pt);
    \node[left] at ($(u2)+(-0.18,0.0)$) {$u_2^*$};

    \draw[fill=white, thick] (ur) circle (2.5pt);
    \node[above right] at (ur) {$u_r$};


    \draw[gray!40, thick, dashed] (u1) circle (1.0cm);
    \node[gray!60, font=\scriptsize] at ($(u1)+(-0.3,-1.3)$)
      {$T_{u_1^*}U$};

    \coordinate (v1end) at ($(u1)+(0.8,-0.08)$);
    \draw[->, red, very thick] (u1) -- (v1end);
    \node[above right=-1pt, red, font=\small] at (v1end) {$v_1$};

    \draw[gray!40, thick, dashed] (u2) circle (1.2cm);
    \node[gray!60, font=\scriptsize] at ($(u2)+(0.3,-1.55)$)
      {$T_{u_2^*}U$};

    \coordinate (v2end) at ($(u2)+(0.45,-0.7)$);
    \draw[->, blue, very thick] (u2) -- (v2end);
    \node[below, blue, font=\small] at ($(v2end)+(0.0,-0.0)$) {$v_2$};

    \coordinate (pv1end) at ($(u2)+(0.95,0.1)$);
    \draw[->, red, very thick] (u2) -- (pv1end);
    \node[right, red, font=\small] at ($(pv1end)+(0.0,0.0)$) {$\Phi_{12}\,v_1$};

    \coordinate (v3end) at ($(u2)+(0.55,0.8)$);
    \draw[->, red!70!black, very thick] (u2) -- (v3end);
    \node[above right=-1pt, red!70!black, font=\small] at (v3end) {$v_3$};


    \draw[->, thick, dashed, red]
      ($(v1end)+(0.0,-0.05)$) .. controls ($(v1end)+(0.8,-0.5)$) and ($(u2)+(0.0,1.3)$) .. ($(u2)+(0.05,0.15)$);
    \node[right, red, font=\scriptsize] at ($(v1end)+(0.6,-0.3)$) {$\Phi_{12}$};

  \end{tikzpicture}
  \caption{Sequential transversality for a \(3\times 3\) triple-shock Riemann solution.
  From each point along the 1-Hugoniot locus (black spine from \(u_l\)), a 2-Hugoniot curve (blue) branches out, forming a two-dimensional surface in state space (light blue ribs).
  The backward 3-Hugoniot from \(u_r\) (dark red) intersects this surface transversely at \(u_2^*\).
  The tangent vector \(v_1\) (red) at \(u_1^*\) is transported to \(u_2^*\) via the pushforward \(\Phi_{12}\) (dashed red arrow), yielding \(\Phi_{12}\,v_1\).
  Together with \(v_2\) (blue, tangent to the 2-Hugoniot) and \(v_3\) (dark red, tangent to the 3-Hugoniot), the sequential transversality condition~\eqref{eq:sequential-transversality-3x3} requires these three vectors to span \(T_{u_2^*}U\cong\mathbb{R}^3\).}
  \label{fig:sequential-transversality}
\end{figure}

\paragraph{The pushforward map via the implicit function theorem}
The map \(\Phi_{12}\) has a natural interpretation through the implicit function theorem as follows.
The 2-Hugoniot condition \(H(u_2,s_2;\,u_1)=0\), evaluated at the fixed speed \(s_2=s_2^*\), defines a relation between \(u_1\) and \(u_2\). Since the partial derivative \(\p_{u_2}H = DF(u_2^*)-s_2^*I_n\) is invertible, the implicit function theorem provides a \(C^1\) map \(\Phi\) defined near \(u_1^*\) satisfying \(u_2=\Phi(u_1)\) and \(\Phi(u_1^*)=u_2^*\).
Differentiating the identity \(H(\Phi(u_1),s_2^*;\,u_1)\equiv 0\) with respect to \(u_1\) and using \(\p_{u_1}H = -(DF(u_1^*)-s_2^*I_n)\) gives
\begin{equation}\label{eq:pushforward-derivative}
  D\Phi(u_1^*)
  = -\bigl(\p_{u_2}H\bigr)^{-1}\p_{u_1}H
  = \bigl(DF(u_2^*)-s_2^*I_n\bigr)^{-1}\bigl(DF(u_1^*)-s_2^*I_n\bigr)
  = \Phi_{12}.
\end{equation}
The pushforward \(\Phi_{12}\) is therefore the derivative at \(u_1^*\) of the map that sends a left state \(u_1\) near \(u_1^*\) to the right state \(u_2\) joined to it by a jump of the fixed speed \(s_2^*\).
As a linear isomorphism \(T_{u_1^*}U\to T_{u_2^*}U\), it transports tangent vectors from the first intermediate state to the second.

\paragraph{The sequential transversality condition.}
All three columns of \(T\) live in the tangent space \(T_{u_2^*}U\cong\mathbb{R}^3\).
The condition \(\det T\neq 0\) is equivalent to
\begin{equation}\label{eq:sequential-transversality-3x3}
  \spn\bigl\{\Phi_{12}\,v_1,\; v_2,\; v_3\bigr\} = T_{u_2^*}U \cong \mathbb{R}^3.
\end{equation}
This is the \textbf{sequential transversality condition} for the \(3\times 3\) system; see Figure~\ref{fig:sequential-transversality} for a geometric illustration.

Unlike the \(2\times 2\) case, where both tangent vectors naturally live at the common intermediate state \(u_1^*\) and the transversality condition is simply \(T_{u_1^*}\mathcal{W}^f_{1,u_l}+T_{u_1^*}\mathcal{W}^b_{2,u_r}=T_{u_1^*}U\), in the \(3\times 3\) case the tangent contribution from the first wave originates at \(u_1^*\) and must be \emph{transported} to \(u_2^*\) via the pushforward \(\Phi_{12}\) before the spanning condition can be evaluated.
The transversality is \emph{sequential} in the sense that the tangent contributions from each wave are composed through the pushforward maps induced by the intermediate Hugoniot connections, and the final condition~\eqref{eq:sequential-transversality-3x3} is evaluated at a single base point.

Furthermore, observe that the choice of base point \(u_2^*\) (the forward-sweep convention) is not unique.
One could equally eliminate in reverse order, reducing to \(u_1^*\) instead, and the resulting condition would involve the inverse pushforward \(\Phi_{21}=\Phi_{12}^{-1}=\bigl(DF(u_1^*)-s_2^*I_n\bigr)^{-1}\bigl(DF(u_2^*)-s_2^*I_n\bigr)\) transporting vectors from \(u_2^*\) to \(u_1^*\).
The two formulations are equivalent since the pushforward is an isomorphism.

\begin{remark}[Generalizations]\label{rem:sequential-transversality-general}
The sequential transversality framework developed above for the triple-shock configuration extends to all wave-type combinations (shock--shock--rarefaction, etc.).
The general pushforward interpretation and explicit rarefaction block formulas are developed in Section~\ref{ssec:transversality-nxn}.
\end{remark}

\subsection{The general \texorpdfstring{$n\times n$}{nxn} case}
\label{ssec:transversality-nxn}

The arguments of Sections~\ref{ssec:transversality-2x2} and~\ref{ssec:transversality-3x3} generalize directly to an \(n\times n\) strictly hyperbolic system, with all $2^n$ wave-type configurations treated at once.

\subsubsection{The objective map}

Consider a Riemann solution with \(n\) waves,
\[
  u^{(0)} = u_l
  \xrightarrow{\;\omega_1^*\;}
  u^{(1)}
  \xrightarrow{\;\omega_2^*\;}
  \cdots
  \xrightarrow{\;\omega_n^*\;}
  u^{(n)} = u_r,
\]
in which the \(k\)-th wave is a Lax \(k\)-shock or a \(k\)-rarefaction, with \(n-1\) intermediate states \(u^{(1)},\dots,u^{(n-1)}\in U\subset\mathbb{R}^n\) and wave parameters \(\omega_1^*,\dots,\omega_n^*\), the waves arranged in order of increasing speed, where each wave is non-degenerate, i.e., \(u^{(k-1)}\neq u^{(k)}\) for all \(k=1,\ldots,n\).

The \(k\)-th \textbf{wave map} is set by the type of the \(k\)-th wave,
\begin{equation}\label{eq:wave-condition-cases}
  W_k \;\in\; \bigl\{\,H,\;\, R_k\,\bigr\},
  \qquad 1\le k\le n-1,
\end{equation}
the Rankine--Hugoniot map~\eqref{eq:RH-map} for a shock and the rarefaction map of \cref{def:rarefaction-map} for a rarefaction, with the running state $u^{(k)}$, the wave parameter $\omega_k$, and the previous state $u^{(k-1)}$. The \textbf{wave parameter} \(\omega_k\) is the wave's single scalar degree of freedom, the shock speed, \(\omega_k = s_k\), for a shock and the eigenvalue at the running state, \(\omega_k = \lambda_k^* := \lambda_k(u^{(k)})\), for a rarefaction. In the latter case \(\omega_k\) sets how far along the integral curve \(\Gamma_k(\,\cdot\,;\,u^{(k-1)})\) one travels, the startpoint eigenvalue \(\lambda_k(u^{(k-1)})\) being determined by the previous state. The terminal wave map $W_n \in \bigl\{\,H,\; R_n\,\bigr\}$ is instead anchored at the right state; \cref{rem:backward-anchoring} explains why this choice is essential for a terminal rarefaction.

Stacking the \(n\) wave maps gives the objective map
\begin{equation}\label{eq:objective-nxn}
  J\ml u^{(1)},\dots,u^{(n-1)},\omega_1,\dots,\omega_n\mr
  :=
  \begin{pmatrix}
    W_1\ml u^{(1)},\omega_1;\,u^{(0)} = u_l\mr \\[3pt]
    W_2\ml u^{(2)},\omega_2;\,u^{(1)}\mr \\[3pt]
    \vdots \\[3pt]
    W_{n-1}\ml u^{(n-1)},\omega_{n-1};\,u^{(n-2)}\mr \\[3pt]
    W_n\ml u^{(n-1)},\omega_n;\,u^{(n)} = u_r\mr
  \end{pmatrix}
  \in\mathbb{R}^{n^2},
\end{equation}
the first $n-1$ rows anchored at the previous state and the terminal row at the right state, exactly as in the $3\times 3$ case.
The unknowns \(\ml u^{(1)},\dots,u^{(n-1)},\omega_1,\dots,\omega_n\mr\in\mathbb{R}^{(n-1)n+n}=\mathbb{R}^{n^2}\) have the same dimension as the target, and the Riemann solution corresponds to a zero of \(J\).

Since the \(k\)-th block row of \(J\) depends only on \(u^{(k-1)}\), \(u^{(k)}\), and \(\omega_k\), the Jacobian of~\eqref{eq:objective-nxn} with respect to the unknowns \(\ml u^{(1)},\dots,u^{(n-1)},\omega_1,\dots,\omega_n\mr\) has the explicit block structure
\begin{equation}\label{eq:jacobian-nxn-block}
  DJ\ml u^{(1)},\dots,u^{(n-1)},\omega_1^*,\dots,\omega_n^*\mr =
  \begin{pmatrix}
    A_1  & 0      & \cdots & 0          & a_1  & 0      & \cdots & 0   \\
    B_2  & A_2    & \cdots & 0          & 0    & a_2    & \cdots & 0   \\
    0    & B_3    & \ddots & \vdots     & \vdots & \ddots & \ddots & \vdots \\
    \vdots & \ddots & \ddots & A_{n-1}  & 0    & \cdots & a_{n-1} & 0 \\
    0    & \cdots & 0      & B_n        & 0    & \cdots & 0       & a_n
  \end{pmatrix},
\end{equation}
where the blocks in row $k$ are the derivatives $A_k = \p_{u^{(k)}}W_k$ (diagonal, $n\times n$), $B_k = \p_{u^{(k-1)}}W_k$ (sub-diagonal, $n\times n$), and $a_k = \p_{\omega_k}W_k$ (speed column, $n\times 1$), each with respect to the indicated variable in whichever argument slot of $W_k$ it occupies.

For a shock wave, the blocks are those of the Rankine--Hugoniot map,
\begin{equation}\label{eq:shock-blocks}
\begin{aligned}
  A_k &= DF\ml u^{(k)}\mr - s_k^* I_n, \qquad && k\le n-1, \\
  B_k &= -\bigl(DF\ml u^{(k-1)}\mr - s_k^* I_n\bigr), \qquad && 2\le k\le n-1, \\
  a_k &= -\ml u^{(k)}-u^{(k-1)}\mr, \qquad && k\le n-1,
\end{aligned}
\end{equation}
with $\omega_k=s_k$. For a rarefaction wave with $1\le k\le n-1$, the wave map is $R_k = u^{(k)} - \Gamma_k(\lambda_k;\,u^{(k-1)})$, $\Gamma_k$ being the integral curve of the eigenvector field $r_k$, and the blocks are given by
\begin{equation}\label{eq:rarefaction-blocks}
\begin{aligned}
  A_k &= \p_{u^{(k)}}R_k = I_n, \qquad && k\le n-1, \\
  B_k &= \p_{u^{(k-1)}}R_k = -D_{u^{(k-1)}}\Gamma_k\ml\lambda_k^*;\,u^{(k-1)}\mr, \qquad && 2\le k\le n-1, \\
  a_k &= \p_{\lambda_k}R_k = -\Gamma_k'\ml\lambda_k^*;\,u^{(k-1)}\mr = -\alpha_k\,r_k\ml u^{(k)}\mr, \qquad && k\le n-1,
\end{aligned}
\end{equation}
where \(\Gamma_k'\) denotes the derivative with respect to the eigenvalue parameter \(\xi=\lambda_k\), the last equality by the rarefaction ODE~\eqref{eq:Gamma-xi-ODE} with nonzero scalar $\alpha_k$.
Similarly, the backward-anchored terminal row contributes
\begin{equation}\label{eq:terminal-blocks}
  \begin{aligned}
  B_n &= \p_{u^{(n-1)}}W_n =
  \begin{cases}
    DF\ml u^{(n-1)}\mr - \omega_n I_n & \text{(shock)},\\[3pt]
    I_n & \text{(rarefaction)},
  \end{cases}
  \\
  a_n &= \p_{\omega_n}W_n =
  \begin{cases}
    u_r - u^{(n-1)} & \text{(shock)},\\[3pt]
    -\alpha_n\,r_n\ml u^{(n-1)}\mr & \text{(rarefaction)},
  \end{cases}
  \end{aligned}
\end{equation}
with $\alpha_n\neq 0$ as in~\eqref{eq:Gamma-xi-ODE}.
This is precisely the block lower-bidiagonal structure of \cref{lem:block-reduction}, extending the pattern observed in the \(3\times 3\) case~\eqref{eq:jacobian-3x3-block}.

\subsubsection{Generalized sequential transversality}

To apply \cref{lem:block-reduction} to the Jacobian~\eqref{eq:jacobian-nxn-block}, we check that the diagonal blocks are invertible. Each rarefaction block is the identity by~\eqref{eq:rarefaction-blocks}, and each shock block $A_k = DF\ml u^{(k)}\mr - s_k^* I_n$ is invertible because the Lax condition~\eqref{eq:lax-entropy} ensures that $s_k^*$ is not an eigenvalue of $DF\ml u^{(k)}\mr$.

The lemma also requires the terminal block $B_n$ to be invertible; the interior sub-diagonal blocks are never inverted. By~\eqref{eq:terminal-blocks}, $B_n = I_n$ for a terminal rarefaction, and for a terminal shock the Lax condition~\eqref{eq:lax-entropy} places $s_n^*$ strictly between $\lambda_{n-1}\ml u^{(n-1)}\mr$ and $\lambda_n\ml u^{(n-1)}\mr$, so $B_n = DF\ml u^{(n-1)}\mr - s_n^* I_n$ is invertible by strict hyperbolicity. \cref{lem:block-reduction} therefore applies in every wave-type configuration.

The propagation matrix \(P_k = -A_k^{-1}B_k\) from \cref{lem:block-reduction} is geometrically a \textbf{pushforward} from $u^{(k-1)}$ to $u^{(k)}$.
To see this, for $2\le k\le n-1$, the wave condition \(W_k\ml u^{(k)},\,\omega_k;\,u^{(k-1)}\mr = 0\), evaluated at a fixed wave parameter \(\omega_k^*\), implicitly defines \(u^{(k)}\) as a function of \(u^{(k-1)}\) since \(A_k = \p_{u^{(k)}}W_k\) is invertible.
By the implicit function theorem, the derivative of this map is given by
\begin{equation}\label{eq:pushforward-def}
  \frac{\p u^{(k)}}{\p u^{(k-1)}} = -A_k^{-1}\,B_k = P_k,
\end{equation}
where \(B_k = \p_{u^{(k-1)}}W_k\).
This is the pushforward \(\Phi_{k-1,\,k}\) that transports tangent vectors along the \(k\)-th wave curve.

Substituting the shock blocks~\eqref{eq:shock-blocks} into~\eqref{eq:pushforward-def} gives
\begin{equation}\label{eq:pushforward-chain}
  \Phi_{j,\,j+1} := P_{j+1} = \bigl(DF\ml u^{(j+1)}\mr - s_{j+1}^*I_n\bigr)^{-1}
    \bigl(DF\ml u^{(j)}\mr - s_{j+1}^*I_n\bigr), \qquad 1\le j\le n-2,
\end{equation}
exactly as derived in~\eqref{eq:pushforward-derivative} for the \(3\times 3\) case, while the rarefaction blocks~\eqref{eq:rarefaction-blocks} give $\Phi_{k-1,\,k}=-B_k=D_{u^{(k-1)}}\Gamma_k\ml\lambda_k^*;\,u^{(k-1)}\mr$, the derivative of the rarefaction curve with respect to its base state at fixed eigenvalue level. Notice that by construction the terminal wave defines no pushforward.
For \(j<m\) we set \(\Phi_{j,\,m}:=\Phi_{m-1,\,m}\circ\cdots\circ\Phi_{j,\,j+1}\).

By \cref{lem:block-reduction}, the determinant of~\eqref{eq:jacobian-nxn-block} factors as $\bigl(\prod_{k=1}^{n-1}\det A_k\bigr)\det B_n\,\det T$, where the \textbf{transversality matrix} $T\in\mathbb{R}^{n\times n}$ is the reduced matrix given by the column formula~\eqref{eq:T-column-formula}. With the seed vectors $c_k = -A_k^{-1}a_k$ from~\eqref{eq:seed-vector} and the composed pushforwards $\Phi_{k,\,n-1}=P_{n-1}\cdots P_{k+1}$, its columns are given by
\begin{equation}\label{eq:transversality-general}
  T_k =
  \begin{cases}
    \Phi_{k,\,n-1}\,c_k & \text{if } 1\le k\le n-1,\\[4pt]
    B_n^{-1}a_n & \text{if } k=n,
  \end{cases}
\end{equation}
where \(\Phi_{n-1,\,n-1}\) is the identity by convention, and the blocks are set by the wave types via the block values~\eqref{eq:shock-blocks}--\eqref{eq:terminal-blocks}. The seed vector is $c_k = A_k^{-1}\ml u^{(k)}-u^{(k-1)}\mr$ for a shock and $c_k = \Gamma_k'\ml\lambda_k^*;\,u^{(k-1)}\mr = \alpha_k\,r_k\ml u^{(k)}\mr$ for a rarefaction, a scalar multiple of the $k$-th right eigenvector. Invertibility of the full Jacobian is therefore equivalent to the \emph{transversality condition} $\det T\neq 0$.

Each column of \(T\) has a geometric interpretation.
The seed vector $c_k$ is tangent at $u^{(k)}$ to the projected wave curve based at $u^{(k-1)}$, the Hugoniot locus for a shock (cf.\ Section~\ref{ssec:transversality-2x2}) and the rarefaction curve for a rarefaction. The pushforward chain \(\Phi_{k,\,n-1}\) then transports this tangent forward through the sequence of intermediate states to the common reference point \(u^{(n-1)}\).
The final column $B_n^{-1}a_n$ is tangent at $u^{(n-1)}$ to the backward $n$-wave curve $\mathcal{W}^b_{n,u_r}$ of \cref{def:wave-curves}, along the branch $\mathcal{H}^+_{n,u_r}$ for a shock and $\mathcal{R}^-_{n,u_r}$ for a rarefaction.
The transversality matrix thus collects \(n\) tangent directions at \(u^{(n-1)}\), each originating from a different wave curve, and the condition $\det T\neq 0$ asks whether these directions span all of \(\mathbb{R}^n\).

For the all-shocks configuration, the columns are given explicitly by
\begin{equation}\label{eq:transversality-normalized-nxn}
  T_k =
  \begin{cases}
    \Phi_{k,\,n-1}\,A_k^{-1}\ml u^{(k)}-u^{(k-1)}\mr & \text{if } 1\le k\le n-1,\\[4pt]
    \bigl(DF\ml u^{(n-1)}\mr - s_n^*I_n\bigr)^{-1}\ml u_r-u^{(n-1)}\mr & \text{if } k=n,
  \end{cases}
\end{equation}
with the pushforwards~\eqref{eq:pushforward-chain}. The final column is the general form of the vector $v_3$ of the $3\times 3$ case, computed and interpreted geometrically in Section~\ref{ssec:transversality-3x3}. In the all-rarefactions configuration, each pushforward is the base-state derivative $\Phi_{j,\,j+1} = D_{u^{(j)}}\Gamma_{j+1}\ml\lambda_{j+1}^*;\,u^{(j)}\mr$, and the columns reduce to transported eigenvector directions, $T_k = \Phi_{k,\,n-1}\,\alpha_k\,r_k\ml u^{(k)}\mr$ for $1\le k\le n-1$ and $T_n = -\alpha_n\,r_n\ml u^{(n-1)}\mr$.

\paragraph{Independence of the base point}
The choice of \(u^{(n-1)}\) as the reference point is a consequence of the forward-sweep elimination; in the all-shocks configuration, the transversality condition itself is independent of this choice.
To evaluate the condition at any other intermediate state \(u^{(m)}\) (\(1\le m\le n-1\)), one applies the inverse pushforward \(\Phi_{m,\,n-1}^{-1}\) to every column of \(T\), obtaining a matrix whose columns are the tangent contributions transported to \(u^{(m)}\).
Since in the all-shocks case each pushforward \(\Phi_{j,\,j+1}\) is a linear isomorphism (both factors of~\eqref{eq:pushforward-chain} are invertible, the Lax condition~\eqref{eq:lax-entropy} keeping $s_{j+1}^*$ off the spectrum of $DF$ at both adjacent states), so is the composed map \(\Phi_{m,\,n-1}\), and therefore
\[
  \det T\neq 0
  \quad\Longleftrightarrow\quad
  \det\bigl(\Phi_{m,\,n-1}^{-1}\,T\bigr)\neq 0.
\]
The transversality condition is thus an intrinsic property of the wave configuration.
If an interior wave $k\in\{2,\dots,n-1\}$ is a rarefaction, its pushforward is not a linear isomorphism (\cref{lem:Gamma-base-rank}). The geometric interpretation nevertheless survives in principle, since the wave's own tangent $\alpha_k\,r_k\ml u^{(k)}\mr$ spans the eigenline $E_k\ml u^{(k)}\mr$, which the eigenchart construction of Section~\ref{ssec:rarefaction} defines along the entire curve, so the tangent slides along the integral curve from $u^{(k)}$ to $u^{(k-1)}$, and a frame transport across the rarefaction can be assembled from this slide. We leave the formal construction to the interested reader. The sequential transversality condition $\det T\neq 0$ therefore provides a unified criterion for structural stability across all wave-type configurations.

\begin{remark}[Why the terminal wave is anchored backward]\label{rem:backward-anchoring}
A forward-anchored terminal rarefaction $W_n = u_r - \Gamma_n(\omega_n;\,u^{(n-1)})$ with $\omega_n=\lambda_n(u_r)$ would have $B_n = -D_{u^{(n-1)}}\Gamma_n$, which has rank $n-1$ (\cref{lem:Gamma-base-rank}), so \cref{lem:block-reduction} would not apply. Anchored at $u_r$ instead, the terminal wave has invertible $B_n$ in both cases~\eqref{eq:terminal-blocks}, and wave $n$ branches backward from $u_r$ to meet the forward-chained waves $1,\dots,n-1$ at $u^{(n-1)}$, where the columns of $T$ collect the $n-1$ forward-transported tangents and the backward wave-curve tangent.
\end{remark}

\section{Main Theorems}\label{sec:structural-stability}

This section establishes the three main results of the paper. The first, \cref{thm:structural-stability}, shows that the transversality condition guarantees structural stability of the Riemann solution under perturbations of the left and right states \emph{and} the flux function. The second, \cref{thm:genericity}, shows that this transversality condition holds for almost every choice of left and right states. Together they yield the third, generic structural stability (\cref{thm:generic-structural-stability}).

\subsection{The grand objective map}\label{ssec:grand-objective}

The objective map~\eqref{eq:objective-nxn} was defined in Section~\ref{ssec:transversality-nxn} with the left state \(u_l\), right state \(u_r\), and flux \(F\) held fixed.
For the structural stability analysis, we must allow these quantities to vary.
Write
\(
  x = \bigl(u^{(1)},\dots,u^{(n-1)},\,\omega_1,\dots,\omega_n\bigr) \in \mathbb{R}^{n^2}
\)
for the wave parameters and
\(
  y = \bigl(u_l,\,u_r,\,F\bigr) \in U^2\times\bigl[C^2(K)\bigr]^n
\)
for the perturbation parameters, where \(K\subset U\) is a compact connected set that is the closure of its interior. Here \([C^2(K)]^n\) is the space of maps \(K\to\mathbb{R}^n\) with components in \(C^2(K)\), a Banach space under the norm \(\|F\|_{[C^2(K)]^n}=\sum_{i=1}^{n}\|F_i\|_{C^2(K)}\); since the analysis involves the flux only through its values on \(K\), we do not distinguish \(F\) from its restriction to \(K\).

The \textbf{grand objective map} is then defined by
\begin{equation}\label{eq:extended-objective}
  \mathcal{J}(x;\,y)
  :=
  \begin{pmatrix}
    W_1\bigl(u^{(1)},\,\omega_1;\,u^{(0)},\,F\bigr) \\[3pt]
    W_2\bigl(u^{(2)},\,\omega_2;\,u^{(1)},\,F\bigr) \\[3pt]
    \vdots \\[3pt]
    W_{n-1}\bigl(u^{(n-1)},\,\omega_{n-1};\,u^{(n-2)},\,F\bigr) \\[3pt]
    W_n\bigl(u^{(n-1)},\,\omega_n;\,u^{(n)},\,F\bigr)
  \end{pmatrix}
  \in\mathbb{R}^{n^2},
\end{equation}
where \(u^{(0)}=u_l\), \(u^{(n)}=u_r\), and the $W_k$ are the wave maps of Section~\ref{ssec:transversality-nxn}, the case selection~\eqref{eq:wave-condition-cases} for $k\le n-1$ and the backward-anchored terminal map of~\eqref{eq:objective-nxn} for $k=n$, now carrying an explicit flux argument $F$.
At fixed parameters \(y=y_0=(u_l,u_r,F)\), the grand objective map \(\mathcal{J}(\,\cdot\,;\,y_0)\) coincides with the objective map \(J\) from~\eqref{eq:objective-nxn}.
In the same way, the transversality matrix~\eqref{eq:transversality-general} acquires a parameter dependence, and we write \(T(x;\,y)\) when we wish to emphasize this.
The transversality matrix \(T(x;\,y)\) is defined at a zero of \(\mathcal{J}(\,\cdot\,;\,y)\) precisely when the diagonal blocks \(A_k\) and the terminal block $B_n$ of the Jacobian \(D_x\mathcal{J}\) are invertible, so that \cref{lem:block-reduction} applies. This holds at every admissible zero, where the Lax entropy condition makes each shock block \(A_k\) and the terminal-shock block $B_n$ invertible and each rarefaction block equals \(I_n\). Note that at a zero violating the Lax conditions, a shock speed can coincide with an eigenvalue of $DF$ at an adjacent state, and $T$ is then undefined. Where \(T\) is defined, the factorization \(\det(D_x\mathcal{J})=\det A_1\cdots\det A_{n-1}\,\det B_n\,\det T\) of \cref{lem:block-reduction} shows that \(\det T\neq 0\) is equivalent to invertibility of \(D_x\mathcal{J}\), hence to transversality of \(\mathcal{J}(\,\cdot\,;\,y)\) to \(\{0\}\) at that zero. We call invertibility of \(D_x\mathcal{J}\) at a zero the \textbf{transversality condition}. At admissible zeros, where \(T\) is defined, it takes the matrix form \(\det T\neq 0\), which is equivalent to the sequential transversality condition of Section~\ref{sec:transversality}.

Let \(\mathcal{O}\subset[C^2(K)]^n\) be an open neighborhood of \(F\) such that Assumptions~\ref{ass:SH} and~\ref{ass:GN} continue to hold on $\intr(K)$ for every \(\tilde{F}\in\mathcal{O}\); such a neighborhood exists since these are open conditions on the compact set $K$.
The grand objective map is then a map
\[
  \mathcal{J}\colon\;
  \overbrace{\underbrace{\intr(K)^{n-1}}_{u^{(1)},\,\dots,\,u^{(n-1)}}\times\underbrace{\mathbb{R}^{n}}_{\omega_1,\,\dots,\,\omega_n}}^{x}
  \;\times\;
  \overbrace{\underbrace{\intr(K)^{2}}_{u_l,\;u_r}\times\underbrace{\mathcal{O}}_{F}}^{y}
  \;\longrightarrow\;\mathbb{R}^{n^2}.
\]

\subsection{Main results}\label{ssec:main-results}

Throughout this subsection, the system~\eqref{eq:conservation-law} satisfies Assumptions~\ref{ass:SH},~\ref{ass:GN}, and~\ref{ass:regular} with Riemann initial data~\eqref{eq:Riemann-data}, and $K\subset U$ denotes a compact connected set as in Section~\ref{ssec:grand-objective}. We consider Riemann solutions $u^{(0)}=u_l,\,u^{(1)},\dots,u^{(n-1)},\,u^{(n)}=u_r$ with $n$ waves, each a Lax shock or a rarefaction, satisfying the Lax entropy condition, whose states and rarefaction arcs lie in $\intr(K)$. The wave parameters $x_0$ of such a solution satisfy $\mathcal{J}(x_0;\,y_0)=0$ at the unperturbed parameters $y_0=(u_l,u_r,F)$, where the grand objective map~\eqref{eq:extended-objective} is formed with the solution's wave pattern.

Following~\cite{Schecter}, associate to each wave of such a solution its \textbf{speed interval} $[\omega_k^-,\,\omega_k^+]$, defined as the single point $[\omega_k,\,\omega_k]$ for the $k$-th shock and as the range of characteristic speeds $[\lambda_k(u^{(k-1)}),\,\lambda_k(u^{(k)})]$ across the fan for the $k$-th rarefaction. The upper endpoint $\omega_k^+$ is the wave parameter $\omega_k$ of Section~\ref{ssec:transversality-nxn} when $k\le n-1$; for a terminal rarefaction the wave parameter is instead the lower endpoint $\omega_n^-=\lambda_n(u^{(n-1)})$, by the backward anchoring~\eqref{eq:objective-nxn}. The same intervals, written $[\tilde{\omega}_k^-,\,\tilde{\omega}_k^+]$, are associated to a perturbed solution.

\begin{definition}[Structural stability]\label{def:structural-stability}
Suppose the states of a Riemann solution as above all lie in $\intr(K)$.
We say that the solution is \textbf{structurally stable on \(K\)} if there exist neighborhoods \(\mathcal{U}_l\) of \(u_l\), \(\mathcal{U}_r\) of \(u_r\) in \(\intr(K)\), and \(\mathcal{O}\) of \(F\) in \([C^2(K)]^n\), together with a \(C^1\) map
\[
  \bigl(\tilde{u}_l,\,\tilde{u}_r,\,\tilde{F}\bigr)
  \;\longmapsto\;
  \bigl(\tilde{u}^{(1)},\dots,\tilde{u}^{(n-1)},\,\tilde{\omega}_1,\dots,\tilde{\omega}_n\bigr)
\]
defined on \(\mathcal{U}_l\times\mathcal{U}_r\times\mathcal{O}\), such that the perturbed Riemann problem with left and right states \((\tilde{u}_l,\tilde{u}_r)\) and flux \(\tilde{F}\) admits a unique nearby solution satisfying the following conditions:
\begin{enumerate}[label=\textup{(\roman*)},leftmargin=*]
\item all perturbed intermediate states remain in \(\intr(K)\),
\item the \(k\)-th wave is a Lax shock (resp.\ rarefaction) whenever the original \(k\)-th wave is a shock (resp.\ rarefaction),
\item the Lax entropy conditions~\eqref{eq:lax-entropy} hold for every shock, and
\item the wave speeds are ordered, that is, the speed intervals of the perturbed solution satisfy
\begin{equation}\label{eq:speed-chain}
  \tilde{\omega}_1^-\le\tilde{\omega}_1^+<\tilde{\omega}_2^-\le\tilde{\omega}_2^+<\cdots<\tilde{\omega}_n^-\le\tilde{\omega}_n^+,
\end{equation}
where equality \(\tilde{\omega}_k^-=\tilde{\omega}_k^+\) holds for that $k$ if and only if the \(k\)-th wave is a shock.
\end{enumerate}
Moreover, since the solution map is \(C^1\), the perturbed intermediate states and speed intervals are close to their unperturbed values, that is, for any \(\varepsilon>0\) there exists \(\delta>0\) such that whenever
\[
  \|\tilde{u}_l-u_l\|_2 + \|\tilde{u}_r-u_r\|_2 + \|\tilde{F}-F\|_{[C^2(K)]^n} < \delta,
\]
the perturbed solution satisfies
\[
  \max_{1\le k\le n-1}\|\tilde{u}^{(k)}-u^{(k)}\|_2 + \max_{1\le k\le n,\ \sigma\in\{-,+\}}|\tilde{\omega}_k^{\sigma}-\omega_k^{\sigma}| < \varepsilon.
\]
\end{definition}

The within-wave strict inequalities in~\eqref{eq:speed-chain} encode rarefaction fan monotonicity, ensuring each fan spreads in the physically correct direction (see the non-zero speed gaps in Section~\ref{ssec:transversality-2x2}) while the between-wave strict inequalities $\tilde{\omega}_k^+<\tilde{\omega}_{k+1}^-$ ensure that consecutive waves are speed-separated. Since all strict inequalities in~\eqref{eq:speed-chain} involve continuous functions of the wave parameters and intermediate states, the speed chain is an open condition. Hence every sufficiently small perturbation preserves the strict inequalities, because the unperturbed solution satisfies them strictly.

Recall from Section~\ref{ssec:grand-objective} that the transversality condition holds at a zero of $\mathcal{J}$ if $D_x\mathcal{J}$ is invertible there. We say that it \textbf{holds on $K$} if it holds at every zero of $\mathcal{J}(\,\cdot\,;\,y_0)$ in $\intr(K)^{n-1}\times\mathbb{R}^n$, and that it \textbf{holds strictly on $K$} if, in addition, every such zero other than $x_0$ \emph{strictly violates} admissibility, satisfying the strict reversal of at least one of the inequalities in the Lax conditions~\eqref{eq:lax-entropy} or the speed chain~\eqref{eq:speed-chain} (the zero $x_0$ itself, being admissible, satisfies them strictly).

\begin{theorem}[Structural stability]\label{thm:structural-stability}
Consider a Riemann solution as above whose states all lie in $\intr(K)$. If the transversality condition holds at \(x_0\), then the Riemann solution is structurally stable on \(K\) in the sense of \cref{def:structural-stability}. If, in addition, the transversality condition holds strictly on $K$, then the perturbed solution is the unique admissible Riemann solution of the same wave pattern in a fixed compact neighborhood of the original.
\end{theorem}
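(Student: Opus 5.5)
The proof applies the implicit function theorem on Banach spaces (\cref{thm:IFT-Banach}) to the grand objective map $\mathcal{J}(x;y)$ of~\eqref{eq:extended-objective} at the base point $(x_0,y_0)$. It has four steps: regularity of $\mathcal{J}$, construction of the perturbed solution, admissibility of that solution, and uniqueness under the strict hypothesis.

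\emph{Step 1: regularity of $\mathcal{J}$.} We must check that $\mathcal{J}$ is $C^1$ jointly in $(x,y)$ on a neighborhood of $(x_0,y_0)$ in $\intr(K)^{n-1}\times\mathbb{R}^n\times\intr(K)^2\times\mathcal{O}$.

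For shock rows this is direct. The map $(u,s,u_0,F)\mapsto F(u)-F(u_0)-s(u-u_0)$ is affine in $F$. Evaluation $[C^2(K)]^n\times\intr(K)\to\mathbb{R}^n$, $(F,u)\mapsto F(u)$, is $C^1$, because $F\in C^2$ makes $u\mapsto F(u)$ differentiable with derivative $DF(u)$ continuous in $(F,u)$.

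For rarefaction rows the map is $u^{(k)}-\Gamma_k(\omega_k;u^{(k-1)},F)$. Here we need $C^1$ dependence of the $\xi$-parameterized curve on $(\xi,u_0,F)$.
\begin{itemize}
\item On each eigenchart, \cref{lem:C1-evec-IFT} applied with $F$ as an extra Banach parameter gives a representative $r_k(u;F)$ that is $C^1$ in $(u,F)$. The IFT equation is $C^1$ in $F$ through $DF\in C^1$.
\item $C^1$ dependence of ODE flows on initial data and parameters then makes $\phi_k(t;u_*,F)$ a $C^1$ function.
\item The reparameterization $t=t(\xi)$ inverts $\lambda_k(\phi_k(t))=\xi$. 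Its $t$-derivative $\nabla\lambda_k\cdot r_k\neq 0$ by \cref{ass:GN}, uniformly on the compact arc.
\item $\lambda_k(u;F)$ is $C^1$ in $(u,F)$ by the same lemma, so the IFT gives $t(\xi;u_*,F)$ in $C^1$.
\item Gluing finitely many eigencharts along the compact arc, which lies in $\intr(K)$, gives $C^1$ dependence of $\Gamma_k$.
\end{itemize}
This is the step I expect to be the main obstacle. The point is that only $C^1$ in $(\xi,u_0,F)$ is needed, not $C^1$ of $\Gamma_k'$ in $u$, so $C^2$ flux suffices.

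\emph{Step 2: existence of the perturbed solution.} By hypothesis $D_x\mathcal{J}(x_0;y_0)$ is invertible. The implicit function theorem yields neighborhoods $\mathcal{U}_l,\mathcal{U}_r,\mathcal{O}$ and a unique $C^1$ map $y\mapsto\tilde{x}(y)$ with $\mathcal{J}(\tilde{x}(y);y)=0$, unique within a neighborhood $\mathcal{V}$ of $x_0$. Continuity of $\tilde{x}$ gives the $\varepsilon$–$\delta$ closeness of the intermediate states and wave parameters. The speed-interval endpoints $\lambda_k(\tilde{u}^{(k-1)};\tilde F)$ and $\lambda_k(\tilde{u}^{(k)};\tilde F)$ are continuous in $y$, since $\lambda_k$ depends continuously on $(u,F)$ for simple eigenvalues. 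Closeness of the speed intervals follows.

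\emph{Step 3: admissibility of the perturbed solution.} Conditions (i)--(iv) of \cref{def:structural-stability} are open conditions, and the unperturbed solution satisfies them strictly:
\begin{itemize}
\item the states lie in $\intr(K)$;
\item the Lax inequalities~\eqref{eq:lax-entropy} hold;
\item the speed chain~\eqref{eq:speed-chain} holds, with strict within-rarefaction inequalities $\lambda_k(u^{(k-1)})<\lambda_k(u^{(k)})$ by non-degeneracy and injectivity of $\Gamma_k$;
\item non-degeneracy $u^{(k)}\neq u^{(k-1)}$ holds.
\end{itemize}
These persist after shrinking the neighborhoods. Wave type is preserved by construction, because the same pattern of wave maps is used. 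The zero $\tilde x(y)$ is therefore a genuine Riemann solution of the same pattern.

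\emph{Step 4: uniqueness under the strict hypothesis.} Fix a compact neighborhood $\mathcal{N}\subset\intr(K)^{n-1}\times\mathbb{R}^n$ of $x_0$, and argue by contradiction. Suppose $y_j\to y_0$ and there are admissible zeros $x_j\in\mathcal{N}$ of $\mathcal{J}(\cdot;y_j)$ with $x_j\neq\tilde{x}(y_j)$.
\begin{itemize}
\item By compactness $x_j\to\bar{x}$ along a subsequence, and continuity gives $\mathcal{J}(\bar{x};y_0)=0$.
\item The non-strict admissibility inequalities pass to the limit, so $\bar{x}$ does not strictly violate admissibility.
\item By the strict hypothesis, $\bar{x}=x_0$.
\item Then $x_j\in\mathcal{V}$ for large $j$, and IFT uniqueness forces $x_j=\tilde{x}(y_j)$, a contradiction.
\end{itemize}
Shrinking $\mathcal{U}_l,\mathcal{U}_r,\mathcal{O}$ accordingly gives the uniqueness claim.
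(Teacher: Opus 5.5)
Your proposal is correct. Steps 1--3 match the paper's parts (A) and (B); your Step 1 is essentially the appendix proof of \cref{lem:C1-objective}, which the paper cites rather than reproves. Your Step 4, however, reaches the uniqueness clause by a genuinely different route.

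The paper uses the full ``holds strictly'' hypothesis, in four moves:
\begin{enumerate}
\item Transversality at every zero makes $0$ a regular value of $\mathcal{J}(\cdot\,;y_0)$, so the zeros in a compact set $C\subset\mathcal{D}_{y_0}$ are finitely many, $z_1,\dots,z_m$.
\item The implicit function theorem at each $z_i$ gives disjoint neighborhoods $V_i$, in each of which the zero persists uniquely.
\item A lower bound $\|\mathcal{J}(x;y)\|\ge c/2$ on $C\setminus\bigcup_i V_i$ rules out new zeros.
\item Openness of each strict reversal keeps every persisting $z_i\neq x_0$ non-admissible.
\end{enumerate}

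Your sequential-compactness argument uses only transversality at $x_0$ and the strict-violation part of the hypothesis. A limit of admissible perturbed zeros satisfies the non-strict Lax and speed-chain inequalities, by joint continuity of the eigenvalues in $(u,F)$. It therefore strictly reverses none of them, so it must be $x_0$, and local IFT uniqueness finishes the argument.

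The trade-off is as follows. Your route is shorter and shows that transversality at the other zeros is not needed for the uniqueness statement. The paper's route gives more information: the whole zero set in $C$, admissible or not, persists one-for-one and no zeros are created.

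One detail needs repair. Choose $\mathcal{N}$ inside the open slice $\mathcal{D}_{y_0}$, as the paper does with $C$, not merely inside $\intr(K)^{n-1}\times\mathbb{R}^n$. The rarefaction maps are only defined on that slice, and you need $(\bar x,y_0)\in\mathcal{D}$ for continuity to give $\mathcal{J}(\bar x;y_0)=0$.

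A smaller point: in Step 3, injectivity of $\Gamma_k$ only gives $\lambda_k(u^{(k-1)})\neq\lambda_k(u^{(k)})$. The strict inequality $<$ comes from the given solution being a forward fan.
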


\begin{theorem}[Genericity of the transversality condition]\label{thm:genericity}
Fix a wave pattern, the assignment of shock or rarefaction to each of the \(n\) waves as in~\eqref{eq:wave-condition-cases}, with which the grand objective map is formed.
For almost every\footnote{Here, ``almost every'' is understood in the sense that the set of \((u_l,u_r)\) for which the conclusion fails is of Lebesgue measure zero in \(U^2\).} \((u_l,u_r)\in U^2\) with $y = (u_l, u_r, F)$, the transversality condition holds at every non-degenerate zero of \(\mathcal{J}(\,\cdot\,;\,y)\), a zero with \(u^{(k)}\neq u^{(k-1)}\) for all \(k\).
\end{theorem}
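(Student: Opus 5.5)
We apply the foliated parametric transversality theorem (\cref{thm:foliated-parametric-transversality}) with parameter manifold $\mathcal{P}=\{(u_l,u_r)\}\subset U^2$ (an open set of dimension $2n$), target $\mathcal{Y}=\mathbb{R}^{n^2}$, submanifold $\mathcal{Z}=\{0\}$, and flux $F$ held fixed. The transversality of $\mathcal{J}(\,\cdot\,;y)$ to $\{0\}$ at a zero is exactly invertibility of $D_x\mathcal{J}$. Dimension count: $\dim\mathcal{X}+\dim\mathcal{Z}-\dim\mathcal{Y}=n^2+0-n^2=0$, so $C^1$ regularity suffices for hypothesis (1).

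For the fibers we take $\mathcal{X}_p$ to be the open set of $x=(u^{(1)},\dots,u^{(n-1)},\omega_1,\dots,\omega_n)$ with $u^{(k)}\neq u^{(k-1)}$ for all $k$ (with $u^{(0)}=u_l$, $u^{(n)}=u_r$), and with each rarefaction parameter $\omega_k$ in the maximal interval of $\Gamma_k(\cdot;u^{(k-1)})$ (for $k=n$, of $\Gamma_n(\cdot;u_r)$). The foliated set $\mathcal{XP}$ is then an open subset of $\mathbb{R}^{n^2}\times\mathbb{R}^{2n}$. Its openness uses continuous dependence of the maximal interval on the base point, from \cref{prop:Gamma-maximal} and standard ODE theory. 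Hypotheses (2) and (3) then hold trivially. Hypothesis (4), that $\mathcal{J}$ is $C^1$ jointly in $(x,p)$, needs the $C^1$ dependence of $\Gamma_k(\xi;u_0)$ on $(\xi,u_0)$. I would obtain it chartwise: the eigenchart flow $\phi_k(t;u_*)$ is $C^1$ in $(t,u_*)$ since $r_k\in C^1$, and the reparameterization $t(\xi;u_*)$ is $C^1$ by the implicit function theorem applied to $\lambda_k(\phi_k(t;u_*))=\xi$. The $\partial_t$-derivative of that relation is $\nabla\lambda_k\cdot r_k\neq0$, and $\lambda_k$ is $C^1$ by \cref{lem:C1-evec-IFT}. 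Gluing across finitely many charts along a compact arc uses uniqueness.

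The main step is hypothesis (5): the full map $\mathbf{F}(x,p)=\mathcal{J}(x;u_l,u_r,F)$ must be a submersion at every zero in $\mathcal{XP}$. It suffices to show that the derivative with respect to $(u_l,u_r)$, together with part of $D_x$, is surjective. I would use the block structure. The $u_r$-derivative touches only the last block row, and the $u_l$-derivative only the first. The block columns for $u^{(1)},\dots,u^{(n-1)}$ hit rows $k$ and $k+1$ through $A_k$ and $B_{k+1}$.

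Surjectivity then proceeds by an upper-triangular argument, solving for $\delta u_l$ and the $\delta u^{(k)}$ successively. Row 1 is controlled by $\partial_{u^{(1)}}W_1=A_1$, which is invertible at non-degenerate zeros: it is $I_n$ for a rarefaction and $DF-s_1I$ for a shock. To avoid relying on Lax at non-admissible zeros, I would instead exploit the anchor derivatives. For the last row, $\partial_{u_r}W_n$ equals $-(DF(u_r)-s_nI)$ for a shock, which is of full rank only when $s_n$ is not an eigenvalue at $u_r$. For a rarefaction it is $-D_{u_r}\Gamma_n$, which has rank $n-1$ (\cref{lem:Gamma-base-rank}), and the missing direction is supplied by $a_n$.

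The obstacle is the general non-degenerate zero, where some $A_k$ may be singular. My fallback is to solve in the order row $n$, row $n-1$, $\dots$, row 1. Row $k$ ($k\le n-1$) is reached through $\partial_{u^{(k)}}W_k$ combined with $\partial_{u^{(k-1)}}W_k$ and the speed column. For a shock, $[DF(u^{(k)})-s_kI\mid-(u^{(k)}-u^{(k-1)})]$ has rank $n$ by the regular manifold hypothesis (Assumption~\ref{ass:regular}), since $u^{(k)}\neq u^{(k-1)}$. For a rarefaction, $\partial_{u^{(k)}}W_k=I_n$ directly. The same regular-value argument, with $\partial_{u^{(n-1)}}W_n$ and $a_n$, makes row $n$ surjective via $(\delta u^{(n-1)},\delta\omega_n)$, or via $u_r$.

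A triangular back-substitution then proceeds as follows. Choose $(\delta u^{(n-1)},\delta\omega_n)$ to hit row $n$. Then choose $(\delta u^{(n-2)},\delta\omega_{n-1})$ to fix row $n-1$ given the already chosen $\delta u^{(n-1)}$. Continue down to row 1, where the remaining freedom is $\delta u_l$ together with $\delta\omega_1$. Here the rank-$n$ property of $[\partial_{u_l}W_1\mid a_1]$ follows from the antisymmetry of $H$ and Assumption~\ref{ass:regular} applied with base $u^{(1)}$; for a rarefaction it uses $a_1$ plus $D_{u_l}\Gamma_1$ of rank $n-1$. The row-$k$ solve uses $(\delta u^{(k-1)},\delta\omega_k)$, not $\delta u^{(k)}$, so I must check rank of $[\partial_{u^{(k-1)}}W_k\mid a_k]$; this is again Assumption~\ref{ass:regular} via antisymmetry for shocks, and \cref{lem:Gamma-base-rank} plus transversality of $\Gamma_k'$ to the image for rarefactions. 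Verifying that last rarefaction case is where I expect the real work.

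With (5) established, \cref{thm:foliated-parametric-transversality} yields that for almost every $(u_l,u_r)$ the map $\mathcal{J}(\,\cdot\,;y)$ is transverse to $\{0\}$, which is the claim.
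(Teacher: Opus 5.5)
Your proposal is correct and is the paper's proof in mirror image. Both apply \cref{thm:foliated-parametric-transversality} and get surjectivity of $D_{(x,p)}\mathcal{J}$ from a block-bidiagonal structure whose diagonal blocks have full row rank. The paper pairs each speed column $a_k$ with the right state of its wave, giving blocks $[A_k\mid a_k]$, with the $u_r$ column supplying $A_n$. Your back-substitution from row $n$ down to row $1$ pairs it with the left state instead, giving blocks $[B_k\mid a_k]$, with the $u_l$ column supplying $B_1$; this is exactly the alternative recorded in \cref{rem:which-parameter}.

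The rarefaction step you flag as the real work is already the augmented-rank clause of \cref{lem:Gamma-base-rank}, since $[B_k\mid a_k]=-[\,D_{u_0}\Gamma_k\mid\alpha_k r_k(u^{(k)})\,]$ for $k\le n-1$ (and $B_n=I_n$ for a terminal rarefaction). For shocks, $[B_k\mid a_k]=\pm D_{(u,s)}H$ at a point of a lifted Hugoniot locus by antisymmetry. Your early claim that $A_1$ is invertible at every non-degenerate zero is false for a shock, but your final argument never uses it.
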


\Cref{thm:genericity} fixes the wave pattern first and then chooses the pair of states. Since there are only finitely many patterns, the order of quantifiers can be interchanged.

\begin{corollary}[Pattern-uniform genericity]\label{cor:pattern-uniform}
For almost every $(u_l,u_r)\in U^2$ with $y=(u_l,u_r,F)$, the transversality condition holds at every non-degenerate zero of the grand objective map formed with any of the $2^n$ wave patterns.
\end{corollary}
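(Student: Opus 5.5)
The corollary follows from \cref{thm:genericity} by a countable (here finite) union of null sets. Enumerate the $2^n$ wave patterns as $p_1,\dots,p_{2^n}$. Each $p_j$ is an assignment of shock or rarefaction to each of the $n$ waves, and it determines the case selection~\eqref{eq:wave-condition-cases} together with the backward-anchored terminal map. Write $\mathcal{J}_{p_j}$ for the grand objective map~\eqref{eq:extended-objective} formed with $p_j$, with the flux $F$ held fixed. For each $j$, \cref{thm:genericity} supplies a set $N_j\subset U^2$ of Lebesgue measure zero such that, for every $(u_l,u_r)\in U^2\setminus N_j$ and $y=(u_l,u_r,F)$, the transversality condition (invertibility of $D_x\mathcal{J}_{p_j}$) holds at every non-degenerate zero of $\mathcal{J}_{p_j}(\,\cdot\,;\,y)$.

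Set $N:=\bigcup_{j=1}^{2^n}N_j$. Subadditivity of Lebesgue measure gives $|N|\le\sum_j|N_j|=0$. Now take any $(u_l,u_r)\in U^2\setminus N$. For any pattern $p_j$ we have $(u_l,u_r)\notin N_j$, so the conclusion of \cref{thm:genericity} applies to $\mathcal{J}_{p_j}$. Hence the transversality condition holds at every non-degenerate zero of every one of the $2^n$ grand objective maps, which is exactly the statement.

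The only point needing care is that the exceptional set in \cref{thm:genericity} may depend on the pattern but on nothing else. The flux $F$ is fixed throughout. The compact set $K$ and the domain of $\mathcal{J}$ are also pattern-independent up to the choice of wave maps. Non-degeneracy of a zero ($u^{(k)}\ne u^{(k-1)}$) is defined the same way for every pattern. So there is no hidden uncountable family of exceptional sets, and finiteness of the pattern set suffices. There is no real obstacle beyond stating this cleanly.
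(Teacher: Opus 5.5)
Your proof is correct and is essentially the paper's argument. The paper intersects the $2^n$ full-measure sets supplied by \cref{thm:genericity}, which is just the complement of your finite union $N=\bigcup_j N_j$ of null sets.
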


\begin{proof}
For each of the $2^n$ wave patterns, \cref{thm:genericity} provides a full-measure set of pairs in $U^2$ for which its conclusion holds. The intersection of these finitely many full-measure sets again has full measure in $U^2$. For every pair in the intersection the conclusion holds for every pattern.
\end{proof}

Combining \cref{thm:structural-stability} and \cref{cor:pattern-uniform} gives the following.

\begin{theorem}[Generic structural stability]\label{thm:generic-structural-stability}
For almost every \((u_l,u_r)\in U^2\), every Riemann solution consisting of \(n\) Lax-admissible waves (if any exists) is structurally stable in the sense of \cref{def:structural-stability} on every compact connected \(K\subset U\) whose interior contains \(u_l\), \(u_r\), the intermediate states, and the arc of each rarefaction wave.
\end{theorem}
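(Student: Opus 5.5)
The plan is to combine \cref{cor:pattern-uniform} with \cref{thm:structural-stability}. The main work is to check that every Riemann solution allowed in the statement is a non-degenerate admissible zero of the grand objective map formed with its own wave pattern, and that the compact set $K$ plays no role in the measure-zero exceptional set.

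First, fix the full-measure set $\mathcal{G}\subset U^2$ given by \cref{cor:pattern-uniform}, applied with the unperturbed flux $F$. Take $(u_l,u_r)\in\mathcal{G}$, and a Riemann solution with $n$ Lax-admissible waves, with states $u^{(0)}=u_l,\dots,u^{(n)}=u_r$. Let $K\subset U$ be compact and connected, with interior containing all these states and every rarefaction arc. Read off the wave pattern of the solution. Form the grand objective map~\eqref{eq:extended-objective} on $\intr(K)^{n-1}\times\mathbb{R}^n$ with this pattern and with the terminal wave anchored backward. The wave parameters $x_0$ (shock speeds, and eigenvalue levels $\lambda_k(u^{(k)})$, resp.\ $\lambda_n(u^{(n-1)})$ for a terminal rarefaction) then satisfy $\mathcal{J}(x_0;y_0)=0$.

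Second, I will check that this zero is non-degenerate and admissible. Each wave is an actual elementary wave connecting distinct states, as built into the definition of an $n$-wave solution in Section~\ref{ssec:transversality-nxn}, so $u^{(k)}\neq u^{(k-1)}$. The Lax conditions hold, so by the discussion after \cref{lem:block-reduction} the blocks $A_k$ and $B_n$ are invertible and $T$ is defined. For rarefaction rows, the requirement that the arc lie in $\intr(K)$ ensures that $\Gamma_k(\cdot;u^{(k-1)})$ from \cref{prop:Gamma-maximal} is defined on an interval containing the relevant $\xi$-range. Hence $R_k$ is a legitimate $C^1$ map near $x_0$, and the zero lies in the domain used in \cref{thm:genericity}. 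One subtlety needs care here. \cref{thm:genericity} is formulated on $U$, while the rarefaction maps are built on a compact $K$. I would argue that the zero set and the derivative $D_x\mathcal{J}$ at $x_0$ depend only on the local germ of $\Gamma_k$ near the solution. By uniqueness of the maximal curve, the germ does not depend on which $K$ is chosen. So invertibility at $x_0$ is a pointwise property, independent of $K$, and the single exceptional null set serves all $K$ at once.

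Third, \cref{cor:pattern-uniform} now gives that $D_x\mathcal{J}(x_0;y_0)$ is invertible, i.e.\ the transversality condition holds at $x_0$. The first part of \cref{thm:structural-stability} then gives structural stability on $K$ in the sense of \cref{def:structural-stability}. Since $K$ and the solution were arbitrary, this proves the theorem. If no $n$-wave solution exists, the statement is vacuous.

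I expect the main obstacle to be the uniformity in $K$ mentioned in the second step. The grand objective map, and so a priori the exceptional set, is defined relative to $K$, and there are uncountably many admissible $K$. My first approach is the locality argument: the transversality condition at $x_0$ involves only $DF$ near the states and the derivatives of $\Gamma_k$ along the arcs, and these are intrinsic to $F$. If that argument is not clean, the fallback is an exhaustion of $U$ by countably many compact connected sets $K_m$ (for example, closures of connected smooth domains increasing to $U$). Intersecting the countably many full-measure sets still gives full measure. Any admissible $K$ containing the solution would then be compared with some $K_m$ that also contains it, again using that the transversality condition at $x_0$ does not depend on the ambient compact set.
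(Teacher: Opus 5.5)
Your fallback is the paper's proof, and it is not optional. The locality argument you lead with does not, on its own, give a single exceptional null set.

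Locality is correct as far as it goes. Take two compact sets whose interiors both contain $u_l$, $u_r$, the intermediate states and the rarefaction arcs. By uniqueness of the maximal rarefaction curve, the wave maps built on them agree near $x_0$, so invertibility of $D_x\mathcal{J}$ at $x_0$ is the same for both. But \cref{cor:pattern-uniform} is a statement about the grand objective map built on one fixed compact $K$, despite the phrase ``almost every $(u_l,u_r)\in U^2$''. Its null set $N_K$ depends on $K$. It says nothing about solutions whose states or arcs leave $\intr(K)$, and for $(u_l,u_r)\notin\intr(K)^2$ it is vacuous. Since $U$ is not compact, no single $K$ contains every solution. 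So ``fix the full-measure set $\mathcal{G}$ given by \cref{cor:pattern-uniform}'' is not well defined until a $K$ is chosen, and any one choice misses solutions outside it. What locality actually shows is that the set of bad pairs lies in $\bigcup_K N_K$, a union over uncountably many $K$. You must still reduce this to a countable union before you can conclude it is null (this is the point of \cref{rem:exhaustion}).

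That reduction is exactly your exhaustion. The correct proof uses the exhaustion to cover all solutions and locality to compare compact sets, which is the paper's argument. The paper takes $K_1\subset K_2\subset\cdots$ nested, with every point of $U$ in some $\intr(K_j)$. Compactness then puts the given $K$ inside a single $K_j$. The wave maps relative to $K$ are restrictions of those relative to $K_j$, so the zero $x_0$ and $D_x\mathcal{J}(x_0;y_0)$ carry over directly. Your variant compares $K$ with some $K_m$ that merely contains the solution in its interior. That requires the two-sided germ comparison you sketch, which also works, but the nested containment $K\subset K_j$ makes the comparison a one-line restriction.
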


\begin{proof}
Fix a nested sequence $K_1\subset K_2\subset\cdots$ of compact connected subsets of $U$ such that every point of $U$ lies in $\intr(K_j)$ for some index $j$ depending on the point. For each \(j\), \cref{cor:pattern-uniform} applied with \(K=K_j\) yields a full-measure set \(G_j\subset U^2\) of pairs for which the transversality condition holds at every non-degenerate zero of the grand objective map determined by \(K_j\), for every wave pattern. The countable intersection \(G:=\bigcap_j G_j\) again has full measure.

Fix \((u_l,u_r)\in G\) and a Riemann solution with \(n\) Lax-admissible waves. Let \(K\subset U\) be compact and connected with \(u_l\), \(u_r\), the intermediate states, and the arc of each rarefaction wave in its interior. Since $K$ is compact, finitely many of the nested sets $\intr(K_j)$ cover $K$, so $K\subset K_j$ for the largest index $j$ among them. Every wave map defined relative to \(K\) is defined relative to \(K_j\). The zero \(x_0\) determined by the solution is therefore also a non-degenerate zero of the grand objective map determined by \(K_j\), formed with the solution's wave pattern. The transversality condition holds at this zero since \((u_l,u_r)\in G_j\). \Cref{thm:structural-stability} then applies on \(K\).
\end{proof}

\begin{remark}[Dependence on the compact set]\label{rem:exhaustion}
The nested sequence has no counterpart in our prior \(2\times2\) work~\cite{TanBertozzi2x2}. There the transversality property is a statement about the full wave curves, which are globally defined curves in \(U\), so a single full-measure set of pairs works for every compact set at once. Here the grand objective map is defined only once a compact set \(K\) is chosen. Its rarefaction maps are built from local eigencharts and continued only within \(\intr(K)\), and its flux argument lives in \([C^2(K)]^n\), so both the map and its domain depend on \(K\). \Cref{cor:pattern-uniform} therefore produces one full-measure set of pairs in \(U^2\) for each choice of \(K\), while \cref{thm:generic-structural-stability} asserts a single full-measure set valid for every \(K\). These sets cannot simply be intersected over all compact \(K\), since a countable intersection of full-measure sets has full measure, but an uncountable one need not. The nested sequence resolves this. Every compact \(K\) lies inside some \(K_j\), so the countably many sets produced for the \(K_j\) suffice, and their countable intersection retains full measure.
\end{remark}

\begin{remark}[Recovery of lower-dimensional cases]\label{rem:recovery-lower-dim}
For \(n=2\), the transversality matrix~\eqref{eq:transversality-general} reduces to the \(2\times 2\) matrix~\eqref{eq:transversality-det-2x2} for the shock-rarefaction case, since there is a single intermediate state and no pushforward chain is needed.
For \(n=3\), the formula~\eqref{eq:transversality-general} specializes to~\eqref{eq:transversality-normalized-3x3} for the triple shock case, the $n=3$ instance of the all-shocks columns~\eqref{eq:transversality-normalized-nxn}, based at the common base point \(u^{(2)}=u_2^*=u^{(n-1)}\). The \(3\times 3\) treatment of Section~\ref{ssec:transversality-3x3} is therefore a direct instance of the general framework.
\end{remark}

\begin{remark}[Relation to the \(2\times2\) theory]\label{rem:transversality-2x2}
The transversality condition at \emph{every} zero of \(\mathcal{J}(\,\cdot\,;\,y_0)\), which underlies the global uniqueness clause of \cref{thm:structural-stability}, is the \(n\times n\) counterpart of the transversality property assumed in our prior \(2\times2\) work~\cite{TanBertozzi2x2}. For \(n=2\) a zero of \(\mathcal{J}\) is an intersection of the loci selected by the wave pattern, $\mathcal{H}_{u_l}$ or $\mathcal{R}_{1,u_l}$ at $u_l$ with $\mathcal{H}_{u_r}$ or $\mathcal{R}_{2,u_r}$ at $u_r$, and the zero is admissible when the Lax inequalities hold there; by \cref{rem:recovery-lower-dim} the transversality matrix reduces to the \(2\times2\) matrix~\eqref{eq:transversality-det-2x2} where \(T\) is defined, in particular at every admissible zero, and \(\det T\neq 0\) at such a zero is exactly the transverse crossing of the two loci; at any remaining zero the condition is invertibility of \(D_x\mathcal{J}\), as in Section~\ref{ssec:grand-objective}. Imposing it at every zero therefore requires that \emph{every} such intersection on the compact set, admissible or not, be transverse, which is precisely what was assumed in our prior work. \Cref{cor:pattern-uniform} shows that, for almost every pair of left and right states, this holds at every non-degenerate zero, exactly as for \(n=2\).
\end{remark}

\subsection{Supporting results}

The proofs of \cref{thm:structural-stability,thm:genericity} rely on the following two results, which we state before the proofs for ease of reference.

\begin{theorem}[Implicit function theorem on Banach spaces {\cite{HunterApplied}}]\label{thm:IFT-Banach}
Suppose that \(X\), \(Y\), and \(Z\) are Banach spaces, and \(\mathbf{F}\colon O\subset X\times Y\to Z\) is a \(C^1\) map defined on an open subset \(O\) of \(X\times Y\). If \((x_0,y_0)\in O\) satisfies \(\mathbf{F}(x_0,y_0)=0\) and \(D_x\mathbf{F}(x_0,y_0)\colon X\to Z\) is a bijective bounded linear map, then there exist \(\varepsilon_1,\varepsilon_2>0\), an open ball \(B_{\varepsilon_1}(y_0)\subset Y\), an open neighborhood \(V(x_0)\) of \(x_0\), and a unique map \(M\colon B_{\varepsilon_1}(y_0)\to V(x_0)\subset B_{\varepsilon_2}(x_0)\) such that
$\mathbf{F}(M(y),y)=0$ for all $y\in B_{\varepsilon_1}(y_0)$, and moreover \(M(y)\) is the only zero of \(\mathbf{F}(\,\cdot\,,y)\) in \(V(x_0)\) for each such \(y\).
The map \(M\) is continuously differentiable, with
\[
  DM(y) = -\bigl[D_x\mathbf{F}(M(y),y)\bigr]^{-1}\,D_y\mathbf{F}(M(y),y).
\]
\end{theorem}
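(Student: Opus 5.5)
The proof I would give is the standard contraction-mapping argument, carried out in the Banach-space setting.

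\emph{Step 1: linearization.} Set \(L:=D_x\mathbf{F}(x_0,y_0)\colon X\to Z\). Since \(L\) is a bounded linear bijection between Banach spaces, the open mapping theorem gives a bounded inverse \(L^{-1}\colon Z\to X\). Define
\[
  G(x,y):=x-L^{-1}\mathbf{F}(x,y).
\]
Zeros of \(\mathbf{F}(\,\cdot\,,y)\) are exactly the fixed points of \(G(\,\cdot\,,y)\). Moreover \(D_xG(x,y)=I-L^{-1}D_x\mathbf{F}(x,y)\) vanishes at \((x_0,y_0)\) and is continuous, because \(\mathbf{F}\) is \(C^1\).

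\emph{Step 2: uniform contraction.} Choose \(\varepsilon_2>0\) with \(\|D_xG(x,y)\|\le\tfrac12\) for \(\|x-x_0\|\le\varepsilon_2\) and \(\|y-y_0\|\le\varepsilon_2\), with the product ball inside \(O\). By the mean value inequality, \(G(\,\cdot\,,y)\) is then \(\tfrac12\)-Lipschitz on the closed ball \(\bar B_{\varepsilon_2}(x_0)\).

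Next use continuity of \(y\mapsto G(x_0,y)\), together with \(G(x_0,y_0)=x_0\), to pick \(\varepsilon_1\le\varepsilon_2\) with \(\|G(x_0,y)-x_0\|<\varepsilon_2/2\) for \(y\in B_{\varepsilon_1}(y_0)\). Then \(\|G(x,y)-x_0\|<\tfrac12\varepsilon_2+\tfrac12\varepsilon_2=\varepsilon_2\), so \(G(\,\cdot\,,y)\) maps the closed ball into the open ball.

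The Banach fixed point theorem now gives a unique fixed point \(M(y)\). Taking \(V(x_0):=B_{\varepsilon_2}(x_0)\) yields existence and the uniqueness clause simultaneously. Continuity, and indeed local Lipschitz continuity, of \(M\) follows from the standard estimate for uniformly contracting families:
\[
  \|M(y)-M(y')\|\le 2\,\|G(M(y'),y)-G(M(y'),y')\|.
\]
The right-hand side is bounded by \(C\|y-y'\|\), using a bound on \(L^{-1}D_y\mathbf{F}\) near \((x_0,y_0)\) after shrinking \(\varepsilon_1\).

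\emph{Step 3: differentiability, the main obstacle.} First, shrinking the balls if necessary, \(D_x\mathbf{F}(x,y)\) remains invertible: it equals \(L\bigl(I-D_xG(x,y)\bigr)\), and the Neumann series applies since \(\|D_xG\|\le\tfrac12\). Denote this inverse by \(D_x\mathbf{F}^{-1}\).

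Fix \(y\) and write \(k:=M(y+h)-M(y)\). Expanding \(0=\mathbf{F}(M(y)+k,y+h)-\mathbf{F}(M(y),y)\) with the Fréchet derivative gives
\[
  D_x\mathbf{F}\,k+D_y\mathbf{F}\,h=o(\|k\|+\|h\|).
\]
The Lipschitz bound \(\|k\|\le C\|h\|\) from Step 2 turns the remainder into \(o(\|h\|)\). Applying \(D_x\mathbf{F}^{-1}\) then yields
\[
  k=-\bigl[D_x\mathbf{F}\bigr]^{-1}D_y\mathbf{F}\,h+o(\|h\|),
\]
which is the claimed formula for \(DM(y)\).

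Continuity of \(DM\) follows because \(DM\) is a composition of continuous maps: \(y\mapsto(M(y),y)\), the continuous partial derivatives of \(\mathbf{F}\), and operator inversion, which is continuous on the open set of invertible operators. The delicate point is this Step 3 bookkeeping: the a priori Lipschitz bound is what allows the \(o(\|k\|)\) remainder to be absorbed into \(o(\|h\|)\).
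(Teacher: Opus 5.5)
Your proof is correct. It is the standard contraction-mapping argument: a bounded inverse of $L$ from the open mapping theorem, the uniformly contracting family $G(x,y)=x-L^{-1}\mathbf{F}(x,y)$ on a closed ball, Lipschitz dependence of the fixed point, and differentiability via the a priori bound $\|k\|\le C\|h\|$. The one step worth making explicit is ``shrink $\varepsilon_1$'': your own estimate with $y'=y_0$ gives $\|M(y)-x_0\|\le 2\|G(x_0,y)-x_0\|\to 0$, which keeps $M(y)$ in the region where $D_y\mathbf{F}$ is bounded.

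The paper gives no proof of this theorem. It cites it from Hunter--Nachtergaele, whose proof follows this same fixed-point route.
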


\begin{lemma}[\(C^1\) regularity of the grand objective map]\label{lem:C1-objective}
Let \(K\subset U\) be a compact connected set, and let \(\mathcal{O}\subset[C^2(K)]^n\) be an open neighborhood of \(F\) such that Assumptions~\ref{ass:SH} and~\ref{ass:GN} hold on $\intr(K)$ for every \(\tilde{F}\in\mathcal{O}\).
Let \(\mathcal{D}\subset \intr(K)^{n-1}\times\mathbb{R}^n \times \intr(K)^2 \times \mathcal{O}\) be the set on which every wave map \(W_k\) in~\eqref{eq:extended-objective} is defined. Then \(\mathcal{D}\) is open, and the grand objective map~\eqref{eq:extended-objective}, viewed as a map
\[
  \mathcal{J}\colon \mathcal{D} \;\longrightarrow\;\mathbb{R}^{n^2},
\]
is \(C^1\) with respect to the product topology, where \(\intr(K)^{n-1}\times\mathbb{R}^n\) is open in \(\mathbb{R}^{n^2}\), \([C^2(K)]^n\) carries the Banach norm of Section~\ref{ssec:grand-objective}, and \(\intr(K)^2\) carries the Euclidean norm.
\end{lemma}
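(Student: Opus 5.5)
The plan is to prove the lemma one block row at a time. $\mathcal{J}$ is a finite stack of wave maps $W_k$, and each $W_k$ depends on only a few of the variables: $u^{(k-1)}$ (or $u_r$ for the terminal row), $u^{(k)}$, $\omega_k$, and $F$. So it is enough to show two things: that the domain of each $W_k$ is open, and that each $W_k$ is $C^1$ jointly in its finite-dimensional arguments and in $F\in[C^2(K)]^n$. Then $\mathcal{D}$ is a finite intersection of preimages of open sets under continuous projections, hence open. Likewise $\mathcal{J}$ is $C^1$ as a map into a product, because each component is a $C^1$ map composed with a bounded linear projection.

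\emph{Shock rows.} These are straightforward. Here $W_k=F(u^{(k)})-F(u^{(k-1)})-\omega_k(u^{(k)}-u^{(k-1)})$, defined wherever $u^{(k)}\neq u^{(k-1)}$, which is an open condition. The key tool is the evaluation map $\mathrm{ev}\colon [C^2(K)]^n\times\intr(K)\to\mathbb{R}^n$, $(F,u)\mapsto F(u)$. It is linear in $F$ and bounded by the norm. Its partial derivative in $u$ is $DF(u)$, which depends continuously on $(F,u)$ because $\|DF-D\tilde F\|_\infty\le\|F-\tilde F\|_{C^2}$ and $DF$ is uniformly continuous on $K$. Continuous partial derivatives then give joint $C^1$. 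Note that only $C^1$ in $u$ is needed here, so $C^2$ is more than enough.

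\emph{Rarefaction rows.} These carry the real difficulty. Here $W_k=u^{(k)}-\Gamma_k(\omega_k;u^{(k-1)},F)$, and we need $\Gamma_k$ to be $C^1$ jointly in $(\xi,u_0,F)$, together with openness of the set where it is defined. I would proceed chartwise using Lemma~\ref{lem:C1-evec-IFT}, applied with the flux as a parameter.

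\begin{enumerate}
\item Regard the defining system $\bigl(DF(u)-\lambda I\bigr)r=0$, $r_i=1$, as a $C^1$ map in $(u,\lambda,r,F)$. By the same evaluation argument, now at the level of $DF$, it is $C^1$ because $F\in C^2$. Its derivative in $(\lambda,r)$ is invertible at a simple eigenvalue. The Banach implicit function theorem (Theorem~\ref{thm:IFT-Banach}) then gives $r_k$ and $\lambda_k$ as $C^1$ functions of $(u,F)$ on an eigenchart that is uniform for $\tilde F$ near $F$.
\item The flow $\phi_k(t;u_*,F)$ of $\dot u=r_k(u;F)$ is then $C^1$ in $(t,u_*,F)$. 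This is the standard differentiable dependence on initial data and parameters for ODEs whose vector field is $C^1$ in state and parameter, with the parameter ranging in a Banach space; its proof via the contraction mapping or implicit function argument carries over verbatim.
\item The reparameterization $\xi=\lambda_k(\phi_k(t;u_*,F);F)$ has $\partial_t\xi=\nabla\lambda_k\cdot r_k\neq0$. Apply Theorem~\ref{thm:IFT-Banach} once more to invert it and obtain $t(\xi;u_*,F)$ as a $C^1$ function. Here the expression $\nabla\lambda_k\cdot r_k$ is only continuous, but that suffices, since the implicit function theorem needs only a $C^1$ defining map, and $(t,u_*,F)\mapsto\lambda_k(\phi_k)$ is $C^1$.
\end{enumerate}

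The main obstacle is that the rarefaction arc may pass through several eigencharts, so the local construction has to be globalized. I would compose along the arc. Cover the compact arc $\Gamma_k([\lambda_k(u_0),\xi];u_0)$ by finitely many eigencharts, and write $\Gamma_k(\xi;u_0,F)$ as a composition of chartwise maps through intermediate points. Each factor is $C^1$ in its base point and in $F$, so the composition is too. Openness of the domain comes from the same construction. If the arc up to $\xi$ lies in $\intr(K)$, then by compactness it lies at positive distance from $\partial K$. By continuous dependence, nearby $(\xi',u_0',\tilde F)$ produce arcs that stay within that distance, so they remain in $\intr(K)$ and $\Gamma_k$ is defined there, using the maximality in Proposition~\ref{prop:Gamma-maximal}. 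The terminal backward-anchored row is handled identically, with $u_r$ in place of $u^{(k-1)}$.
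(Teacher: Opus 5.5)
Your proposal is correct and follows essentially the same route as the paper: a row-by-row reduction, the evaluation-map argument for shock rows, and for rarefaction rows the flux-parameterized eigenchart IFT, $C^1$ dependence of the $t$-flow on $(t,u_0,F)$, inversion of $\Xi=\lambda_k\circ\phi_k$ by the implicit function theorem, and chart-by-chart chaining along the compact arc. The main difference is that the paper proves the Banach-parameter flow dependence by hand (variational equation, Gronwall, and a direct Fr\'echet remainder estimate), whereas you cite it as standard, which is legitimate (for instance, adjoin $\dot F=0$ and treat it as the flow of a $C^1$ vector field on $\mathbb{R}^n\times[C^2(K)]^n$); your positive-distance-to-$\partial K$ argument also makes the openness of $\mathcal{D}$ more explicit than the paper does.
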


Since a zero of $\mathcal{J}$ is in particular a point where every wave map is defined, $\mathcal{D}$ contains all zeros of $\mathcal{J}$.
The proof of \cref{lem:C1-objective} is deferred to Appendix~\ref{app:C1-proof}.

\subsection{Proof of \texorpdfstring{\cref{thm:structural-stability}}{Theorem~4.2}}

\begin{proof}[Proof of \cref{thm:structural-stability}]
We organize the argument into four parts.

\medskip\noindent\textbf{(A) Persistence of existence.}
Let the wave parameters of the given solution be \(x_0=(u^{(1)},\dots,u^{(n-1)},\omega_1^*,\dots,\omega_n^*)\in\intr(K)^{n-1}\times\mathbb{R}^n\), and the unperturbed parameters \(y_0=(u_l,u_r,F)\in\intr(K)^2\times\mathcal{O}\).
The grand objective map satisfies \(\mathcal{J}(x_0;\,y_0)=0\).
The partial Jacobian \(D_x\mathcal{J}(x_0;\,y_0)\) is the \(n^2\times n^2\) matrix~\eqref{eq:jacobian-nxn-block}, its blocks set by the wave types, and is invertible by the transversality hypothesis.
Since \(\mathcal{J}\) is \(C^1\) by \cref{lem:C1-objective}, we may apply \cref{thm:IFT-Banach} (with \(X=\mathbb{R}^{n^2}\), \(Y=\mathbb{R}^{2n}\times[C^2(K)]^n\), and \(Z=\mathbb{R}^{n^2}\), the map \(\mathcal{J}\) defined on the open subset \(\mathcal{D}\subset X\times Y\)) to obtain a \(C^1\) map \(M\colon B_\varepsilon(y_0)\to \intr(K)^{n-1}\times\mathbb{R}^n\) satisfying \(\mathcal{J}(M(y);\,y)=0\) for all \(y\) in a neighborhood of \(y_0\).

\medskip\noindent\textbf{(B) Persistence of admissibility.}
The wave type of each wave (shock or rarefaction) is preserved by construction, since the grand objective map \(\mathcal{J}\) from~\eqref{eq:extended-objective} is defined with a fixed wave pattern, and the implicit function map \(M\) produces solutions to the same system \(\mathcal{J}=0\).

It remains to verify conditions~(i)--(iv) of \cref{def:structural-stability}. At the unperturbed solution \(x_0\), the following open conditions hold:
\begin{itemize}[leftmargin=*]
\item \emph{Lax entropy.} For each \(k\)-shock, \(\lambda_{k-1}(u^{(k-1)})<s_k<\lambda_k(u^{(k-1)})\) and \(\lambda_k(u^{(k)})<s_k<\lambda_{k+1}(u^{(k)})\), with the convention \(\lambda_0=-\infty\), \(\lambda_{n+1}=+\infty\).
\item \emph{Speed chain.} The ordering~\eqref{eq:speed-chain} holds, with speed intervals as defined in Section~\ref{ssec:main-results}. The within-wave strict inequalities (rarefaction fan monotonicity) and the between-wave strict inequalities (inter-wave separation) are all open conditions. When both waves \(k\) and \(k+1\) are rarefactions, the between-wave condition \(\lambda_k(u^{(k)})<\lambda_{k+1}(u^{(k)})\) holds automatically by strict hyperbolicity.
\item \emph{Non-degeneracy.} \(u^{(k)}\neq u^{(k-1)}\) for all \(k=1,\dots,n\).
\item \emph{Containment.} \(u^{(k)}\in\intr(K)\) for all \(k=1,\dots,n-1\).
\end{itemize}
Each condition is an open condition in the wave parameters, intermediate states, and perturbation parameters. Since the implicit function map \(M\) is continuous, composing with \(x=M(y)\) makes each condition an open condition in \(y\) alone, and therefore each condition holds on an open neighborhood of \(y_0\). Taking the finite intersection over all conditions and intersecting with \(B_\varepsilon(y_0)\), we obtain a neighborhood of \(y_0\) on which all admissibility conditions are preserved.

\medskip\noindent\textbf{(C) Persistence of transversality.}
Since \(\mathcal{J}\) is \(C^1\) by \cref{lem:C1-objective} and the implicit function map \(M\) is \(C^1\) by \cref{thm:IFT-Banach}, the map \(y\mapsto\det D_x\mathcal{J}(M(y);\,y)\) is continuous, the determinant being a polynomial in the continuous entries of \(D_x\mathcal{J}\). This determinant is nonzero at \(y_0\) by hypothesis, so it remains nonzero for all \(y\) in a neighborhood of \(y_0\), and the transversality condition holds at each perturbed solution.

Parts (A)--(C) provide, for every \(y\) in a neighborhood of \(y_0\), a unique nearby solution \(M(y)\) of the same wave types satisfying conditions (i)--(iv) of \cref{def:structural-stability} and remaining transverse. The Riemann solution is therefore structurally stable on \(K\).

\medskip\noindent\textbf{(D) Global uniqueness.}
We now invoke the additional hypothesis of \cref{thm:structural-stability}, that the transversality condition holds strictly on \(K\), so that it holds at \emph{every} zero of \(\mathcal{J}(\,\cdot\,;\,y_0)\) in \(\intr(K)^{n-1}\times\mathbb{R}^n\) and every such zero other than \(x_0\) strictly violates admissibility. We then show that the perturbed solution is the unique admissible Riemann solution of the same wave pattern in a fixed compact neighborhood of the original.
By definition of the transversality condition (Section~\ref{ssec:grand-objective}), \(D_x\mathcal{J}(z;\,y_0)\) is invertible (and in particular surjective) at every such zero \(z\).
Hence \(0\) is a regular value of \(\mathcal{J}(\,\cdot\,;\,y_0)\), and by the Regular Value Theorem (\cref{thm:rvt}) the zero set \(\mathcal{J}(\,\cdot\,;\,y_0)^{-1}(\{0\})\cap\bigl(\intr(K)^{n-1}\times\mathbb{R}^n\bigr)\) is a submanifold of dimension \(n^2-n^2=0\), that is, a discrete set of isolated points.
A discrete set may still be infinite, and finiteness requires confining the zeros to a compact set. Since a zero is in particular a point where every wave map is defined, all zeros of \(\mathcal{J}(\,\cdot\,;\,y_0)\) lie in the open slice \(\mathcal{D}_{y_0}:=\{x : (x;\,y_0)\in\mathcal{D}\}\subset\intr(K)^{n-1}\times\mathbb{R}^n\) of the domain of \cref{lem:C1-objective}. We fix a compact set \(C\subset\mathcal{D}_{y_0}\) with \(x_0\in\intr(C)\), the compact neighborhood on which the uniqueness assertion will be proved.
Every zero of \(\mathcal{J}(\,\cdot\,;\,y_0)\) in \(C\) lies in \(\intr(K)^{n-1}\times\mathbb{R}^n\) and is therefore covered by the transversality hypothesis. Were these zeros infinite in number, compactness of \(C\) would furnish a limit point, which lies in \(C\subset\mathcal{D}_{y_0}\) and is therefore again a zero of \(\mathcal{J}(\,\cdot\,;\,y_0)\) by continuity; this would be a non-isolated zero, contradicting the isolatedness established above. Hence the zeros in \(C\) are finite in number; call them \(z_1,\dots,z_m\).

Applying \cref{thm:IFT-Banach} at each \(z_i\) yields pairwise disjoint open neighborhoods \(V_i\) of \(z_i\) (possible since finitely many distinct isolated points) and a common perturbation ball \(B_\delta(y_0)\) such that each \(V_i\) contains a unique zero of \(\mathcal{J}(\,\cdot\,;\,y)\) for every \(y\in B_\delta(y_0)\).
On the compact set \(C\setminus\bigcup_{i=1}^m V_i\), which contains no zeros of \(\mathcal{J}(\,\cdot\,;\,y_0)\), we have \(\|\mathcal{J}(x;\,y_0)\|\ge c\) for some constant \(c>0\).
By continuity of \(\mathcal{J}\) in the product topology, \(\|\mathcal{J}(x;\,y)\|\ge c/2\) for all \(x\in C\setminus\bigcup_{i=1}^m V_i\) and all \(y\) sufficiently close to \(y_0\).
Since \(M\) is continuous and \(x_0\in\intr(C)\), the perturbed solution \(M(y)\) remains in \(\intr(C)\) for \(y\) sufficiently close to \(y_0\).

Combining these estimates and shrinking the perturbation ball if necessary, the following three properties hold for all \(y\) in a neighborhood of \(y_0\):
\begin{enumerate}[label=\textup{(\alph*)},leftmargin=*]
\item \emph{Existing zeros persist.} Each zero \(z_i\) of \(\mathcal{J}(\,\cdot\,;\,y_0)\) in \(C\) gives rise to a unique zero of \(\mathcal{J}(\,\cdot\,;\,y)\) in \(V_i\), and these account for all zeros in \(C\).
\item \emph{No new zeros appear.} On the complement \(C\setminus\bigcup_{i=1}^m V_i\), we have \(\|\mathcal{J}(x;\,y)\|\ge c/2>0\), so no zeros of \(\mathcal{J}(\,\cdot\,;\,y)\) lie outside the neighborhoods \(V_i\).
\item \emph{Non-admissibility is preserved.} By the strict-violation hypothesis, each \(z_i\neq x_0\) satisfies the strict reversal of at least one of the inequalities in the Lax conditions~\eqref{eq:lax-entropy} or the speed chain~\eqref{eq:speed-chain}. Each such reversal is itself an open condition, so the zero persisting in \(V_i\) satisfies the same reversal and remains non-admissible. The zero persisting from \(x_0\) is admissible by part~(B).
\end{enumerate}
In particular, the admissible solution \(x_0\) persists as the unique admissible zero of \(\mathcal{J}(\,\cdot\,;\,y)\) in \(C\).
Together with the structural stability established in parts (A)--(C), this proves the theorem.
\end{proof}

\subsection{Proof of \texorpdfstring{\cref{thm:genericity}}{Theorem~4.3}}

\begin{proof}[Proof of \cref{thm:genericity}]
The wave pattern is fixed in the statement, and the grand objective map \(\mathcal{J}\) is understood with this pattern throughout the proof.

Define the diagonal \(\Delta_{U^2}:=\{(u_l,u_r)\in U^2:u_l=u_r\}\). We exclude it because for $u_l=u_r$ the constant state is the unique entropy solution of small total variation~\cite{Bressan2000}. Since $\Delta_{U^2}$ has Lebesgue measure zero in $U^2$, we restrict to $\mathcal{P}:=U^2\setminus\Delta_{U^2}$ and recover the conclusion on all of $U^2$ in Step~4.
We apply \cref{thm:foliated-parametric-transversality} with this parameter space, parameterized by \(p=(u_l,u_r)\). The flux \(F\) is held fixed throughout the proof, since the theorem asserts genericity in the state pair alone, so the grand objective map is evaluated at the data \((p,F)\) with only \(p\) varying.

\medskip\noindent\textbf{Step 1. Define the foliated domain.}
For each \(p=(u_l,u_r)\), define
\[
  \mathcal{X}_p := \bigl\{\bigl(u^{(1)},\dots,u^{(n-1)},\omega_1,\dots,\omega_n\bigr)\in\mathcal{D}_{(p,F)} : u^{(k)}\neq u^{(k-1)}\text{ for all }k=1,\dots,n\bigr\},
\]
where \(u^{(0)}=u_l\), \(u^{(n)}=u_r\), and \(\mathcal{D}_{(p,F)}:=\{x\in\mathbb{R}^{n^2} : (x;\,(p,F))\in\mathcal{D}\}\) is the open slice of the domain of \cref{lem:C1-objective}, so that \(\mathcal{J}(\,\cdot\,;\,(p,F))\) is defined and \(C^1\) on \(\mathcal{X}_p\), each rarefaction parameter \(\omega_k\) ranging over its admissible interval and each intermediate state over \(\intr(K)\). For \(p\notin\intr(K)^2\) the slice is empty and the conclusion is vacuous.
The conditions \(u^{(1)}\neq u_l\) and \(u^{(n-1)}\neq u_r\) and the slice \(\mathcal{D}_{(p,F)}\) make \(\mathcal{X}_p\) depend on \(p\), so the standard parametric transversality theorem does not apply directly.
\textbf{Define the foliated set.}
$\mathcal{XP} := \bigcup_{p\in\mathcal{P}} \mathcal{X}_p\times\{p\} \;\subset\;\mathbb{R}^{n^2+2n}.$

\medskip\noindent\textbf{Step 2. Verify the hypotheses of \cref{thm:foliated-parametric-transversality}.}
We take \(\mathcal{Y}=\mathbb{R}^{n^2}\), \(\mathcal{Z}=\{0\}\), and the map \(\mathbf{F}\colon\mathcal{XP}\to\mathcal{Y}\) defined by \(\mathbf{F}(x,p)=\mathcal{J}(x;\,(p,F))\) at fixed flux.
\begin{enumerate}[leftmargin=*]
\item \emph{Smoothness threshold.} We need \(r>\max\{0,\,\dim\mathcal{X}+\dim\mathcal{Z}-\dim\mathcal{Y}\}=\max\{0,\,n^2+0-n^2\}=0\). By \cref{lem:C1-objective}, \(\mathcal{J}\) is \(C^1\) in all arguments, so this is satisfied with \(r=1\).
\item \emph{\(\mathcal{XP}\) is a \(C^\infty\) manifold.} The set \(\{(x,p) : (x;\,(p,F))\in\mathcal{D}\}\) is open in \(\mathbb{R}^{n^2+2n}\), being a slice at fixed \(F\) of the open set \(\mathcal{D}\) of \cref{lem:C1-objective}, and each non-degeneracy condition \(u^{(k)}\neq u^{(k-1)}\) removes a closed set, so \(\mathcal{XP}\), the intersection of these sets with the open set \(\mathbb{R}^{n^2}\times\mathcal{P}\), is open in \(\mathbb{R}^{n^2+2n}\) and hence a \(C^\infty\) manifold of dimension \(n^2+2n\).
\item \emph{Tangent splitting.} Since \(\mathcal{XP}\) is open in the product \(\mathbb{R}^{n^2}\times\mathbb{R}^{2n}\), the tangent space splits as \(T_{(x,p)}\mathcal{XP}\cong T_x\mathcal{X}_p\times T_p\mathcal{P}\), with $T_x\mathcal{X}_p\times\{0\}$ the tangent space of the fiber $\mathcal{X}_p\times\{p\}$ and $D\pi_{(x,p)}$ the projection onto the second factor.
\item \emph{\(C^1\) regularity.} The map \((x,p)\mapsto \mathcal{J}(x;\,(p,F))\) is \(C^1\) by \cref{lem:C1-objective}, since \(F\in C^2(K)\) ensures that the composed map is \(C^1\).
\item \emph{Global transversality (\(\mathbf{F}\pitchfork\{0\}\), \cref{def:transversality-map}).} This is verified in Step~3 below.
\end{enumerate}

\medskip\noindent\textbf{Step 3. Check \(\mathbf{F}\pitchfork\{0\}\).}
Since \(\mathcal{Z}=\{0\}\), transversality reduces to surjectivity of the total derivative. We must show that at any zero of the map \((x,p)\mapsto\mathcal{J}(x;\,(p,F))\), the \(n^2\times(n^2+2n)\) Jacobian
\(
  D_{(x,p)}\mathcal{J} = \bigl[\,D_x \mathcal{J} \;\;\big|\;\; D_{u_l}\mathcal{J} \;\;\big|\;\; D_{u_r}\mathcal{J}\,\bigr]
\)
has rank \(n^2\). We exhibit this rank by writing out \(D_x\mathcal{J}\) and the two parameter columns, slotting the parameters in to complete a block bidiagonal staircase, and merging the speed columns to reach a block lower-bidiagonal matrix whose diagonal blocks have full row rank.

Write the \(k\)-th block row of \(\mathcal{J}\) as the derivative of \(W_k(u^{(k)},\omega_k;\,u^{(k-1)})\), with
\[
  A_k=\p_{u^{(k)}}W_k,\qquad B_k=\p_{u^{(k-1)}}W_k,\qquad a_k=\p_{\omega_k}W_k,
\]
and \(u^{(0)}=u_l\), \(u^{(n)}=u_r\); the terminal block row is backward-anchored as in~\eqref{eq:objective-nxn}, and the partial derivatives $A_n=\p_{u^{(n)}}W_n$ and $B_n=\p_{u^{(n-1)}}W_n$ are taken with respect to the same variables regardless of the anchoring. Since \(u^{(k)}\) is the right state of wave \(k\) and the left state of wave \(k+1\), it enters block row \(k\) through \(A_k\) and block row \(k+1\) through \(B_{k+1}\), while the speed \(\omega_k\) enters only block row \(k\), through \(a_k\). In the variables \(x=(u^{(1)},\dots,u^{(n-1)},\omega_1,\dots,\omega_n)\), the state derivative \(D_x\mathcal{J}\) is therefore precisely the matrix \(M\) of \cref{lem:block-reduction},
\[
  D_x\mathcal{J}=\left(\begin{array}{cccc|ccccc}
    A_1 &        &        &         & a_1 &     &        &        &     \\[2pt]
    B_2 & A_2    &        &         &     & a_2 &        &        &     \\[2pt]
        & \ddots & \ddots &         &     &     & \ddots &        &     \\[2pt]
        &        & \ddots & A_{n-1} &     &     &        & a_{n-1}&     \\[2pt]
        &        &        & B_n     &     &     &        &        & a_n
  \end{array}\right),
\]
its state columns \(u^{(1)},\dots,u^{(n-1)}\) carrying the diagonal \(A_1,\dots,A_{n-1}\) and subdiagonal \(B_2,\dots,B_n\), and its speed columns carrying \(\diag(a_1,\dots,a_n)\). The final block row \(W_n\) has no diagonal state block, because its right state \(u^{(n)}=u_r\) is a parameter rather than a component of \(x\); this missing \(A_n\) is what can make \(D_x\mathcal{J}\) singular.

The two parameter columns supply the missing corner blocks,
\[
  D_{u_l}\mathcal{J}=\begin{pmatrix} B_1 \\ 0 \\ \vdots \\ 0 \end{pmatrix},
  \qquad
  D_{u_r}\mathcal{J}=\begin{pmatrix} 0 \\ \vdots \\ 0 \\ A_n \end{pmatrix},
\]
since \(u_l\) is the left state of wave \(1\), contributing \(B_1=\p_{u_l}W_1\) to block row \(1\), and \(u_r\) is the right state of wave \(n\), contributing \(A_n=\p_{u_r}W_n\) to block row \(n\).

Slot \(D_{u_l}\mathcal{J}\) to the left of the state columns and \(D_{u_r}\mathcal{J}\) to their right. Reordered so that \(u_l\) and \(u_r\) flank the intermediate states, the full Jacobian shows a complete block bidiagonal in its state columns beside the diagonal speed block,
\[
  D_{(x,p)}\mathcal{J}=\left(\begin{array}{ccccc|cccc}
    B_1 & A_1 &        &        &     & a_1 &     &        &     \\[2pt]
        & B_2 & A_2    &        &     &     & a_2 &        &     \\[2pt]
        &     & \ddots & \ddots &     &     &     & \ddots &     \\[2pt]
        &     &        & B_n    & A_n &     &     &        & a_n
  \end{array}\right),
\]
each block row \(k\) now holding \(B_k\) and \(A_k\) together. The two parameters have filled the ends of the state diagonal, \(B_1\) from \(u_l\) at the top and \(A_n\) from \(u_r\) at the bottom.

Finally, pair each speed column \(a_k\) with the state column \(u^{(k)}\) that carries \(A_k\) in the same block row. The two merge into a single diagonal block \([A_k\mid a_k]\), and each subdiagonal entry \(B_k\) becomes \([B_k\mid 0]\), giving the block lower-bidiagonal matrix
\[
  \begin{pmatrix}
    {[A_1\mid a_1]} & & & \\
    {[B_2\mid 0]} & {[A_2\mid a_2]} & & \\
     & \ddots & \ddots & \\
     & & {[B_n\mid 0]} & {[A_n\mid a_n]}
  \end{pmatrix}.
\]
The endpoint \(u_l\) has no speed column to pair with, so its block \(B_1\) is left over, forming no \([B_1\mid 0]\) and playing no part.

By Assumption~\ref{ass:regular}, each diagonal block \([A_k\mid a_k]=D_{(u,\omega)}W_k\) has full row rank \(n\), whether or not \(A_k\) itself is invertible, being \(D_{(u,s)}H_{u^{(k-1)}}\) of rank \(n\) on the Hugoniot locus for a shock and having \(A_k=I_n\) for a rarefaction with $k\le n-1$; the non-degeneracy \(u^{(k)}\neq u^{(k-1)}\) built into \(\mathcal{X}_p\) places each state pair in the domain of the assumption.
The terminal block requires its own argument, since the backward anchoring~\eqref{eq:objective-nxn} moves the identity to the $B_n$ slot. For a terminal shock, $[A_n\mid a_n]=-D_{(u,s)}H_{u^{(n-1)}}(u_r,\omega_n)$, which has rank $n$ on the Hugoniot locus by Assumption~\ref{ass:regular}, exactly as before. For a terminal rarefaction, $A_n=-D_{u_r}\Gamma_n(\omega_n;u_r)$ and $a_n=-\alpha_n\,r_n(u^{(n-1)})$ with $\alpha_n\neq0$, so at any zero $[A_n\mid a_n]$ is, up to sign and the factor $\alpha_n$, the augmented matrix of \cref{lem:Gamma-base-rank}, hence has full row rank $n$. A block lower-bidiagonal matrix whose diagonal blocks all have full row rank is itself of full row rank. Hence \(D_{(x,p)}\mathcal{J}\) has rank \(n^2\), and \(\mathbf{F}\pitchfork\{0\}\).

\medskip\noindent\textbf{Step 4. Conclusion.}
By \cref{thm:foliated-parametric-transversality}, for almost every \(p=(u_l,u_r)\in\mathcal{P}=U^2\setminus\Delta_{U^2}\), the restricted map \(\mathcal{J}(\,\cdot\,;\,p)\pitchfork\{0\}\).
Since \(\mathcal{Z}=\{0\}\) is a single point in the target \(\mathbb{R}^{n^2}\), transversality of \(\mathcal{J}(\,\cdot\,;\,p)\) to \(\{0\}\) means that \(D_x\mathcal{J}\) is surjective (and hence invertible, since domain and target have equal dimension \(n^2\)) at every zero of \(\mathcal{J}(\,\cdot\,;\,p)\) in \(\mathcal{X}_p\), that is, at every non-degenerate zero.
This is the transversality condition of Section~\ref{ssec:grand-objective} at every such zero.
Since \(\Delta_{U^2}\) has Lebesgue measure zero in \(U^2\), the conclusion holds for almost every \((u_l,u_r)\in U^2\).
\end{proof}

\begin{remark}[Role of the parameter columns in the genericity proof]\label{rem:parameter-columns}
In the proof of \cref{thm:structural-stability} (structural stability), the transversality condition is assumed, that is, \(D_x\mathcal{J}\) is invertible at the zero, and the implicit function theorem applies directly. In the proof of \cref{thm:genericity} (genericity), however, we work over the full \((x,p)\) space. At an arbitrary zero \((x,p)\), the transversality condition may fail, so \(D_x\mathcal{J}\) alone need not be invertible. The \(D_p\) columns are therefore essential for establishing surjectivity of the \emph{total} derivative \(D_{(x,p)}\mathcal{J}\). The foliated parametric transversality theorem (\cref{thm:foliated-parametric-transversality}) then transfers this global surjectivity to generic surjectivity of \(D_x\mathcal{J}\) alone, recovering the transversality condition for almost every \(p\).
\end{remark}

\begin{remark}[Which end state the proof uses]\label{rem:which-parameter}
Step~3 of the proof of \cref{thm:genericity} uses the $u_r$ column and not the $u_l$ column. This comes from the pairing of each speed column with the right state of its wave, which makes $[A_k\mid a_k]$ the diagonal blocks, so that the last one needs $A_n=\p_{u_r}W_n$ while $B_1=\p_{u_l}W_1$ is left over. Pairing with the left state instead gives diagonal blocks $[B_k\mid a_k]$, of full row rank by an analogous argument, and then $B_1$ is needed and $A_n$ is left over. Either end state therefore suffices, and the theorem is stated for the pair.
\end{remark}

\section{Applications}
\label{sec:applications}

In this section, we illustrate the main theorems with three families of systems.
Section~\ref{ssec:p-system} treats the $p$-system, a classical $2\times 2$ model of compressible isentropic flow in Lagrangian coordinates.
Section~\ref{ssec:PLF} treats gravity-driven particle-laden thin films, where a slurry of $n-1$ distinct particle species leads to an $n \times n$ system and all three main theorems apply once the standing assumptions are verified.
Section~\ref{ssec:ML-flux} treats machine-learned flux approximations, where the structural stability theorems specify how close a learned flux must be to the true one.

\subsection{The p-system}\label{ssec:p-system}

The $p$-system models compressible isentropic flow in Lagrangian coordinates and takes the form
\begin{equation}\label{eq:p-system}
\left\{\;\begin{aligned}
u_t + \bigl(p(v)\bigr)_x &= 0,\\
v_t - u_x &= 0,
\end{aligned}\right.
\end{equation}
with state space $U = \mathbb{R}\times(0,\infty)$ and pressure function $p\in C^2(0,\infty)$.
Here $u$ denotes the velocity and $v$ the specific volume.
Writing~\eqref{eq:p-system} in the form~\eqref{eq:conservation-law} with state variable $(u,v)^T$, the flux is $F(u,v) = \bigl(p(v),\,-u\bigr)^T$ and the Jacobian is given by
\[
A(u,v) = DF(u,v) = \begin{pmatrix} 0 & p'(v) \\ -1 & 0 \end{pmatrix}.
\]
We now verify the three standing assumptions.

\medskip\noindent\textbf{Strict hyperbolicity (Assumption~\ref{ass:SH}).}
The eigenvalues of $A(u,v)$ are $\lambda_{1,2} = \mp\sqrt{-p'(v)}$.
When $p'(v)<0$ on $(0,\infty)$, these are real and distinct, so the system is strictly hyperbolic.
Physically, $p'<0$ follows from thermodynamic considerations \cite{Wendroff_1}.

\medskip\noindent\textbf{Genuine nonlinearity (Assumption~\ref{ass:GN}).}
The right eigenvectors are $r_{1,2} = \bigl({\pm\sqrt{-p'(v)}},\,1\bigr)^T$ and a direct computation gives
\[
\nabla\lambda_k\cdot r_k = \pm\frac{p''(v)}{2\sqrt{-p'(v)}}\neq 0 \qquad\text{for } k=1,2,
\]
provided $p''(v)>0$ on $(0,\infty)$.
This condition holds for a number of physically relevant equations of state \cite{Wendroff_2,Bethe}.

\medskip\noindent\textbf{Regular manifold hypothesis (Assumption~\ref{ass:regular}).}
For a base state $(u_0,v_0)\in U$, the Rankine--Hugoniot map~\eqref{eq:RH-map} reads
\[
H_{(u_0,v_0)}(u,v,s) = \begin{pmatrix} p(v)-p(v_0)-s(u-u_0) \\ -u+u_0-s(v-v_0)\end{pmatrix}.
\]
Its Jacobian with respect to $(u,v,s)$ is given by
\begin{equation}\label{eq:p-system-dH}
D_{(u,v,s)}H_{(u_0,v_0)} = \begin{pmatrix} -s & p'(v) & -(u-u_0) \\ -1 & -s & -(v-v_0) \end{pmatrix}.
\end{equation}
We must show that this $2\times 3$ matrix has rank $2$ at every point of $H_{(u_0,v_0)}^{-1}(0)$ with $(u,v)\neq(u_0,v_0)$.
If $u\neq u_0$, the second component of $H=0$ gives $s(v-v_0) = -(u-u_0)$, so the $2\times 2$ minor formed by columns~1 and~3 of~\eqref{eq:p-system-dH} has determinant $s(v-v_0)-(u-u_0) = -2(u-u_0)\neq 0$.
If $u=u_0$, then $v\neq v_0$, since $(u_0,v_0)$ is excluded from the domain of $H_{(u_0,v_0)}$, and the second component of $H=0$ gives $s(v-v_0)=0$, hence $s=0$; the first component then gives $p(v)=p(v_0)$, which is impossible since $p$ is strictly decreasing. This case is therefore empty.
Hence, $0$ is a regular value of $H_{(u_0,v_0)}$ restricted to $(U\setminus\{(u_0,v_0)\})\times\mathbb{R}$, confirming Assumption~\ref{ass:regular}.

Since the $p$-system satisfies Assumptions~\ref{ass:SH},~\ref{ass:GN}, and~\ref{ass:regular} whenever $p'<0$ and $p''>0$, all three main theorems apply.
By \cref{thm:generic-structural-stability}, for almost every $(u_l,v_l,u_r,v_r)\in U^2$ the Riemann solution is structurally stable under $C^2$-small perturbations to the pressure law $p$ as well as perturbations to the left and right states.
Two remarks on the relation to our prior $2\times 2$ work~\cite{TanBertozzi2x2} are in order. First, that work imposed a graph condition (assumption~(iii) of that paper) on the flux components in order to reparameterize the rarefaction ODE in a fixed Cartesian chart; here the eigenchart construction of Section~\ref{ssec:rarefaction} removes the need for any such condition, and in particular no assumption on the component structure of $F$ enters the $p$-system verification. Second, the regular manifold hypothesis is present in both papers but stated differently. In the prior $2\times 2$ paper it is phrased on a scalar Hugoniot objective function in $(u,v)$ with the shock speed already eliminated, whereas Assumption~\ref{ass:regular} is stated on the lifted Rankine--Hugoniot map $H_{u_0}(u,s)$, and the rank computation above checks surjectivity of $D_{(u,s)}H_{u_0}$ directly on the lifted locus.
The physical interpretation and all applications discussed in~\cite{TanBertozzi2x2} for the $p$-system, including structural stability under interpolation of experimentally tabulated pressure data and stability with respect to measurement errors, remain valid within the present framework.

\subsection{Gravity-driven particle-laden thin films}\label{ssec:PLF}

We consider gravity-driven particle-laden thin films, a class of systems studied extensively in~\cite{Murisic2011,Murisic2013,Liwangshock,Bidensity,luong2025brazilnuteffectbidisperseparticle,2024spiral}.
A thin-film flow of a viscous liquid carrying a suspension of negatively buoyant, rigid, spherical particles is driven down a wide inclined channel.
In our prior work~\cite{TanBertozzi2x2}, a monodisperse suspension (a single particle species) was modeled as a $2\times 2$ system governing the free-surface height $h$ and the depth-averaged particle volume fraction $\phi$.
Here, for $n\geq 2$, we consider a polydisperse suspension consisting of $n{-}1$ distinct particle species (for instance, particles of differing densities or sizes as in~\cite{Bidensity,luong2025brazilnuteffectbidisperseparticle}), so that the state variable
\[
\mathbf{u} = (h,\,h\phi_1,\,\dots,\,h\phi_{n-1})^T \in \mathbb{R}^n
\]
consists of the free-surface height $h>0$ and the conserved quantities $h\phi_i$ for each species, with $\phi_i\in(0,\phi_m)$ denoting the depth-averaged volume fraction of the $i$-th species and $\phi_m$ the maximum packing fraction.
Setting $n=2$ recovers the monodisperse $2\times 2$ system of~\cite{TanBertozzi2x2}.

By conservation of the suspension volume and the particle count for each species, the evolution is governed by the $n\times n$ system of conservation laws
\begin{equation}\label{eq:PLF-nxn}
\begin{aligned}
\p_t h + \p_x \bigl(h^3 f_0(\phi_1,\dots,\phi_{n-1})\bigr) &= 0, \\
\p_t (h\phi_i) + \p_x \bigl(h^3 f_i(\phi_1,\dots,\phi_{n-1})\bigr) &= 0, \qquad i=1,\dots,n{-}1,
\end{aligned}
\end{equation}
with Riemann initial data of the form~\eqref{eq:Riemann-data}.
The flux functions $f_0, f_1,\dots,f_{n-1}$ are obtained by solving a coupled system of nonlinear ODEs in the normal-to-incline direction (generalizing the monodisperse ODE system described in~\cite{Murisic2011,TanBertozzi2x2}) and integrating the resulting velocity and concentration profiles across the film depth.
Writing~\eqref{eq:PLF-nxn} in the form~\eqref{eq:conservation-law}, the flux map is given by
\[
F(\mathbf{u}) = h^3\bigl(f_0(\phi_1,\dots,\phi_{n-1}),\; f_1(\phi_1,\dots,\phi_{n-1}),\;\dots,\; f_{n-1}(\phi_1,\dots,\phi_{n-1})\bigr)^T.
\]
The state space is the open set
\[
U = \bigl\{(h,h\phi_1,\dots,h\phi_{n-1}) \in \mathbb{R}^n \,\big|\, h>0,\;\phi_i\in(0,\phi_m),\; \textstyle\sum_{i=1}^{n-1}\phi_i < \phi_m\bigr\}.
\]

Provided the functions $f_0,\dots,f_{n-1}$ are $C^2$ and the system satisfies Assumptions~\ref{ass:SH},~\ref{ass:GN}, and~\ref{ass:regular} on a suitable open subset of $U$, all three main theorems apply to~\eqref{eq:PLF-nxn}.
In the monodisperse case $n=2$, these conditions were imposed as hypotheses in~\cite{TanBertozzi2x2}, whose results apply once they are verified for specific physical parameters (see also~\cite{Murisic2011,Liwangshock}).
For the polydisperse case $n\geq 3$, strict hyperbolicity and genuine nonlinearity depend on the specific particle interactions and can likewise be checked numerically for a given set of physical parameters.
The regular manifold hypothesis (Assumption~\ref{ass:regular}) is verified by computing the Jacobian $D_{(\mathbf{u},s)}H_{\mathbf{u}_0}$ at each point of the Hugoniot locus and checking surjectivity, as was done for the $p$-system in Section~\ref{ssec:p-system}.

By \cref{thm:generic-structural-stability}, for almost every choice of left and right states, the Riemann solution to~\eqref{eq:PLF-nxn} is structurally stable under $C^2$-small perturbations to the flux functions $f_0,\dots,f_{n-1}$ and to the left and right states.
In particular, the following applications discussed in~\cite{TanBertozzi2x2} for the monodisperse $2\times 2$ system generalize to the polydisperse $n\times n$ setting.
\begin{enumerate}[leftmargin=30pt]
\item[(i)] \textbf{Interpolation of precomputed flux data.}
Since the flux functions are obtained by solving ODEs on a grid of parameter values, a numerical scheme typically interpolates between precomputed grid points. Structural stability guarantees that the Riemann solution structure is preserved under a sufficiently accurate interpolation in the $C^2$ norm.
\item[(ii)] \textbf{Polynomial approximation of flux functions.}
Similarly, if $f_0,\dots,f_{n-1}$ are approximated by piecewise polynomials that are $C^2$-close, the Riemann solution structure persists.
\item[(iii)] \textbf{Comparison of competing physical models.}
If the flux functions obtained from two different modeling frameworks (such as the diffusive flux and suspension balance models~\cite{ugrad_comparative,Fastequilibrium}) are $C^2$-close, the Riemann solutions exhibit the same wave structure.
\item[(iv)] \textbf{Measurement errors in initial data.}
For almost every left and right state, the Riemann solution structure persists under small perturbations to the initial data, providing robustness with respect to experimental measurement errors.
\end{enumerate}

\subsection{Machine-learned flux approximations}\label{ssec:ML-flux}

Recent advances in scientific machine learning have produced a variety of methods that learn or approximate the flux function of a system of conservation laws from data.
The structural stability theorems of Section~\ref{sec:structural-stability} provide a rigorous guarantee for such methods.
If a learning algorithm produces an approximation $\widetilde{F}$ to the true flux $F$ with $\|\widetilde{F}-F\|_{[C^2(K)]^n}$ sufficiently small, and the learned system satisfies Assumptions~\ref{ass:SH},~\ref{ass:GN}, and~\ref{ass:regular}, then by \cref{thm:generic-structural-stability} the Riemann solution structure is preserved for almost every left and right state.
This applies regardless of whether the approximation arises from a neural network, a symbolic regression, or any other data-driven method.
The guarantee covers systems satisfying the standing assumptions, such as the shallow water equations and the $p$-system; non-convex fluxes and the Euler equations, with their linearly degenerate contact field, fall under the extensions discussed in Section~\ref{sec:conclusion}.

We organize the discussion according to the type of object that is learned.

\medskip\noindent\textbf{Learning the physical flux.}
Several methods learn the flux function $F$ itself from solution data.
Patsatzis et al.~\cite{GoRINNs2025} embed a shallow neural network inside a Godunov-type finite volume scheme with an approximate Riemann solver, so that the network learns the physical flux closure $\mathcal{N}(\mathbf{u})$ rather than a discretization-dependent numerical flux.
The Rankine--Hugoniot condition enters the training loss as a penalty term, and because the learned object is a physical flux rather than a discretization-dependent quantity, it is not tied to the grid on which it was trained.
Liu, Zhang, and Gelb~\cite{SymCLaw2026} take a complementary approach in which the flux is parameterized through a pair of neural networks, one representing a strictly convex entropy function (an input-convex network) and the other an entropy flux potential.
Because the flux Jacobian factors as the product of a symmetric matrix and a symmetric positive-definite matrix, hyperbolicity of the learned system, in the sense of real eigenvalues with a complete set of eigenvectors, is guaranteed architecturally rather than by regularization.
In both cases, provided the networks use smooth activation functions, the learned flux $\widetilde{F}$ is a smooth map from $U$ to $\mathbb{R}^n$, so the $C^2$-closeness hypothesis of \cref{thm:generic-structural-stability} is directly meaningful.
The SymCLaw framework is well-suited to the setting of this work, since its architecture builds in hyperbolicity by construction, leaving only the strictness of the eigenvalues to be checked for Assumption~\ref{ass:SH}. Assumptions~\ref{ass:GN} and~\ref{ass:regular} must still be verified on the learned flux.

\medskip\noindent\textbf{Symbolic flux discovery.}
An alternative to neural approximation is to recover a closed-form symbolic expression for the flux from data.
Li and Evje~\cite{ConsLawNet2023,ConsLawNet2D2024} develop a symbolic neural network (S-Net), coupled with an entropy-consistent discretization, that identifies the nonlinear flux function of scalar conservation laws and produces interpretable expressions for both one- and two-dimensional problems.
Because the output is a symbolic formula, the standing assumptions can be checked on the learned flux in closed form, and the $C^2$ distance $\|\widetilde{F}-F\|_{[C^2(K)]^n}$ can be estimated once the true flux is known.

\medskip\noindent\textbf{Learning the numerical flux.}
Rather than learning the physical flux, one may learn the numerical flux $\hat{f}_{j+1/2}$ used at cell interfaces in a finite volume scheme.
Chen, Gelb, and Lee~\cite{CFN2024} propose a conservative form network (CFN) in which the neural network maps a local stencil of cell averages to an intercell numerical flux.
Because the scheme updates cell averages via flux differences, conservation is enforced by architecture, and the Lax--Wendroff theorem guarantees that convergent solutions are weak solutions of the conservation law.
Every consistent numerical flux satisfies $\hat{f}(\mathbf{u},\mathbf{u})=F(\mathbf{u})$, so the learned scheme is associated with an effective physical flux $\widetilde{F}(\mathbf{u})=\hat{f}^{\mathrm{NN}}(\mathbf{u},\dots,\mathbf{u})$.
Provided the scheme converges and $\|\widetilde{F}-F\|_{[C^2(K)]^n}$ is small, \cref{thm:generic-structural-stability} applies to the original system with $\widetilde{F}$ as the perturbed flux, so for almost every left and right state the Riemann solution of the conservation law with flux $\widetilde{F}$ has the same wave structure as that of the original system.

\medskip\noindent\textbf{Learned Riemann solvers.}
A growing body of work replaces exact or approximate Riemann solvers with trained neural networks that take the left and right states $(u_l,u_r)$ as input and return the solution of the Riemann problem, whether as its intermediate states, the associated wave speeds, or directly the interface flux~\cite{MagieraRiemann2020,RuggeriRiemann2022,FluxNet2023,NogueiraRiemann2025,HCNRS2026}.
These methods do not perturb the flux; each is a surrogate for the Riemann map $(u_l,u_r)\mapsto(u^{(1)},\dots,u^{(n-1)},\omega_1,\dots,\omega_n)$ from the left and right states to the intermediate states and wave parameters of the Riemann solution.
When the transversality condition $\det T\neq 0$ holds, the proof of \cref{thm:structural-stability} via the implicit function theorem (\cref{thm:IFT-Banach}) shows that this map is $C^1$ in $(u_l,u_r,F)$, and in particular locally Lipschitz on compact subsets.
For systems that satisfy Assumptions~\ref{ass:SH},~\ref{ass:GN}, and~\ref{ass:regular}, this $C^1$ dependence is one reason why learned Riemann solvers are robust in practice.

\section{Conclusion and Future Work}
\label{sec:conclusion}

This paper establishes generic structural stability for Riemann solutions to $n\times n$ systems of hyperbolic conservation laws under $C^2$ perturbations of the flux and of the left and right states.
The central mechanism is \emph{sequential transversality}, which chains the $n$ wave curves through the intermediate states $u^{(0)}=u_l,\dots,u^{(n)}=u_r$ and reduces structural stability to invertibility of an $n\times n$ transversality matrix $T$, whose columns collect the pushforward-transported wave-curve tangents at the final intermediate state.
Paired with the eigenchart construction for rarefaction curves and the coordinate-free regular manifold hypothesis (Assumption~\ref{ass:regular}) on the lifted Hugoniot locus, this framework removes the dimensional obstruction that prevents the pairwise transversality condition of our prior work from extending beyond $n=2$.
A second methodological contribution is the lifted-space formalism underlying the analysis, which embeds both shock and rarefaction objects as one-dimensional submanifolds of $U\times\mathbb{R}$ by adjoining the shock speed $s$ or the characteristic speed $\xi=\lambda_k$ as an extra coordinate, so that the Lax inequalities become geometric speed-ordering conditions along the staircase trajectory from $u_l$ to $u_r$ rather than auxiliary algebraic constraints on the projected wave curves.

Several directions remain open.
Our analysis operates under Assumptions~\ref{ass:SH} (strict hyperbolicity) and~\ref{ass:GN} (genuine nonlinearity), which together restrict attention to classical Lax shocks and rarefactions and exclude composite waves and contact discontinuities.
Relaxing Assumption~\ref{ass:GN} admits strictly hyperbolic systems with non-convex flux, such as the van der Waals $p$-system in the supercritical regime~\cite{Bethe}, whose inflection points generate composite waves, while relaxing Assumption~\ref{ass:SH} opens the door to systems with coinciding or crossing characteristic speeds.
Extending the sequential-transversality framework to these regimes, with admissibility criteria adapted to each wave type, is a natural next step.

In this paper, Assumption~\ref{ass:GN} is used only for the rarefaction objects of Section~\ref{ssec:rarefaction}, for reparameterizing the rarefaction curve and in Appendix~\ref{app:Gamma-base-rank}. The Hugoniot theory for shocks, the Lax conditions, and the block reduction do not require it. Since the eigencharts of \cref{lem:C1-evec-IFT} persist across the inflection locus $\{\nabla\lambda_k\cdot r_k=0\}$, wave curves for non-convex flux could be built as in Wendroff~\cite{Wendroff_1,Wendroff_2} and Liu~\cite{Liu2x2,Liu_shocks}, with alternating fans and sonic shocks forming at most $n$ wave groups. In that setting, structural stability would be defined for a fixed wave pattern, with one block of~\eqref{eq:objective-nxn} per elementary wave, one attachment equation per sonic junction, and the E-condition of Liu~\cite{Liu2x2,Liu1975} in place of the Lax condition, in the wave-group framework of~\cite{Schecter}. Genericity would again be posed as a parametric transversality statement for the enlarged objective map. The only difficulty we foresee is the regularity of composite wave curves at their attachment points, and resolving it constitutes potential future work.

The framework opens several application-oriented questions.
For learned Riemann solvers, Section~\ref{ssec:ML-flux} shows that $C^2$-close flux approximations preserve wave structure on compact subsets; a concrete next step is to formulate analogous guarantees for neural architectures that directly approximate the Riemann map $(u_l,u_r)\mapsto (u^{(1)},\dots,u^{(n-1)},\omega_1,\dots,\omega_n)$.
For polydisperse particle-laden models, the results here cover the strictly hyperbolic, genuinely nonlinear regime, and a complete structural-stability theory would combine the present analysis with a treatment of the degeneracies and compound-wave solutions that arise in the full physical model.
A related physical question, noted already in our prior work, is that vanishing-viscosity selection may not be the correct regularization limit for particle-laden flows, since surface tension contributes a fourth-order regularizing term and also modifies the fluxes~\cite{SurfaceTension2018}, and such regularizations need not select the same Riemann solutions.
Because the framework refers only to the inviscid flux and the Lax inequalities, and to no particular regularization, one might be able to adapt it to the non-classical waves that other regularizations select by imposing the appropriate admissibility criterion, such as an Oleinik-type chord condition or a kinetic relation. Since these need not be open conditions, the genericity argument, which rests on the strictness of the Lax inequalities, would have to be re-established rather than inherited.

\appendix

\section{Background from differential topology}
\label{app:diff-topo-prelims}

In this appendix, we collect tools from differential topology that are used throughout the paper.  Standard references include Hirsch~\cite{Hirsch}, Lee~\cite{LeeManifold},
and Guillemin--Pollack~\cite{Pollack}.

\begin{definition}[Embedded Submanifold]\label{def:embedded}
Let \(\mathcal{M}\subset\mathbb{R}^N\) be a subset.  We say \(\mathcal{M}\) is a
\emph{\(C^k\) embedded submanifold of dimension \(m\)} if for every
\(p\in \mathcal{M}\) there exist an open set \(W\subset\mathbb{R}^N\) containing
\(p\) and a \(C^k\) map \(\Phi\colon W\to\mathbb{R}^{N-m}\) such that
\begin{enumerate}
\item \(\mathcal{M}\cap W = \Phi^{-1}(0)\), and
\item \(D\Phi(p)\) has rank \(N-m\).
\end{enumerate}
Equivalently, \(\mathcal{M}\) is locally the zero set of a submersion.
\end{definition}

\begin{definition}[Regular Value and Submersion]\label{def:regval}
Let \(W\subset\mathbb{R}^N\) be open and let \(G\colon W\to\mathbb{R}^k\) be \(C^1\).  A point
\(y\in\mathbb{R}^k\) is a \emph{regular value} of \(G\) if
\(DG(x)\) has rank \(k\) (i.e.\ is surjective) for every
\(x\in G^{-1}(y)\).  A \(C^1\) map is a \emph{submersion} at \(x\)
if \(DG(x)\) is surjective.
\end{definition}

\begin{theorem}[Regular Value Theorem]\label{thm:rvt}
Let \(W\subset\mathbb{R}^N\) be open, let \(G\colon W\to\mathbb{R}^k\) be \(C^r\)
(\(r\ge 1\)), and suppose \(y\in\mathbb{R}^k\) is a regular value of
\(G\).  Then \(G^{-1}(y)\) is a \(C^r\) embedded submanifold of
\(\mathbb{R}^N\) of dimension \(N-k\), with tangent space
$T_x\bigl(G^{-1}(y)\bigr) = \ker DG(x)$ for every $x\in G^{-1}(y)$.
\end{theorem}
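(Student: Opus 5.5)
The Regular Value Theorem is the classical statement that a submersion's level set is a submanifold. I plan to deduce it from the implicit function theorem in the finite-dimensional case (\cref{thm:IFT-Banach} with $X=\mathbb{R}^{N-k}$, $Y=Z=\mathbb{R}^k$), using a linear change of coordinates adapted to the kernel of $DG(x)$. The plan is to verify \cref{def:embedded} at each point $x\in G^{-1}(y)$. I will then identify the tangent space as $\ker DG(x)$.

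First, the local submanifold structure. Fix $x_0\in G^{-1}(y)$. Since $DG(x_0)$ has rank $k$, the set $G^{-1}(y)$ is, near $x_0$, already the zero set of $\Phi:=G-y$ on all of $W$, with $D\Phi(x_0)$ of rank $k=N-(N-k)$. This is exactly condition (1)--(2) of \cref{def:embedded} with $m=N-k$. So $G^{-1}(y)$ is an embedded $C^r$ submanifold of dimension $N-k$ essentially by definition.

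To make the dimension and the tangent-space claim concrete, I will produce a local graph parameterization. Choose $k$ columns of $DG(x_0)$ that are linearly independent. After permuting coordinates, write $x=(a,b)\in\mathbb{R}^{N-k}\times\mathbb{R}^k$ so that $D_bG(x_0)$ is invertible. Apply the implicit function theorem to $(a,b)\mapsto G(a,b)-y$ at $(a_0,b_0)$. This gives a neighborhood $V$ of $a_0$, a neighborhood $\widetilde{W}$ of $x_0$, and a $C^r$ map $g\colon V\to\mathbb{R}^k$ such that $G^{-1}(y)\cap\widetilde{W}=\{(a,g(a)):a\in V\}$. The $C^r$ regularity of $g$ follows by bootstrapping the formula $Dg=-(D_bG)^{-1}D_aG$: if $g\in C^j$ with $j<r$, then the right side is $C^{\min(j,r-1)}$, and matrix inversion is smooth. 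Hence the map $a\mapsto(a,g(a))$ is a $C^r$ homeomorphism onto a relatively open piece of $G^{-1}(y)$, with injective derivative. So it is a chart of dimension $N-k$.

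For the tangent space, any $C^1$ curve $\gamma$ in $G^{-1}(y)$ with $\gamma(0)=x$ satisfies $G(\gamma(t))\equiv y$. Differentiating gives $DG(x)\gamma'(0)=0$, so $T_x(G^{-1}(y))\subset\ker DG(x)$. Both spaces have dimension $N-k$: the left side by the chart above, the right side by rank--nullity since $DG(x)$ has rank $k$. Hence they are equal.

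The only step requiring care is the regularity bootstrap for $g$ beyond $C^1$, because \cref{thm:IFT-Banach} as quoted only asserts $C^1$. I expect this to be routine via the derivative formula. Since the result is standard, I would also simply cite \cite{LeeManifold,Pollack}.
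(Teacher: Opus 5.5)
Your argument is correct. The paper gives no proof of \cref{thm:rvt}: it states it in Appendix~\ref{app:diff-topo-prelims} as background and cites the standard references, and your argument is the usual implicit-function-theorem proof found there. Your remark that the submanifold claim is immediate from \cref{def:embedded} with $\Phi=G-y$ is right, and the graph chart then gives the dimension count and the identification $T_x(G^{-1}(y))=\ker DG(x)$.

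There is one labeling slip in your opening sentence. To invoke \cref{thm:IFT-Banach} as quoted, the solved-for space must be $X=\mathbb{R}^k$ (your $b$-variable), so that $D_x\mathbf{F}=D_bG(x_0)$ is bijective onto $Z=\mathbb{R}^k$. The parameter space is then $Y=\mathbb{R}^{N-k}$, the reverse of what you wrote, although the body of your argument already uses the roles correctly.
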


Here \(DG(x)\colon \mathbb{R}^N\to\mathbb{R}^k\) denotes the derivative (Jacobian)
of \(G\) at \(x\), viewed as a linear map; thus
\(\ker DG(x)=\{v\in\mathbb{R}^N : DG(x)\,v = 0\}\). Furthermore, in dimension-counting terms, each scalar equation imposed by
\(G(x)=y\) ``generically'' reduces the dimension by one, giving
\(\dim G^{-1}(y)=N-k\).

\begin{proposition}[Immersions are local diffeomorphisms onto their images {\cite[Thm.~4.25 and Prop.~5.2]{LeeManifold}, \cite[Ch.~1, \S3]{Hirsch}}]\label{cor:immersion-local}
Let \(M\subset\mathbb{R}^N\) be a \(C^1\) embedded submanifold and let
\(f\colon M\to\mathbb{R}^k\) be a \(C^1\) immersion.
Then for every \(p\in M\) there is a neighborhood \(V\subset M\) of \(p\) such that
\(f(V)\) is a \(C^1\) embedded submanifold of \(\mathbb{R}^k\) and
\(f|_V\colon V\to f(V)\) is a \(C^1\) diffeomorphism.
\end{proposition}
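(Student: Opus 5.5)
The plan is to derive the statement from the inverse function theorem applied in a slice chart. Fix \(p\in M\). Since \(M\) is a \(C^1\) embedded submanifold of dimension \(m\), Definition~\ref{def:embedded} gives a local \(C^1\) parametrization. Concretely, take the submersion \(\Phi\) defining \(M\) near \(p\) and split coordinates so that \(D\Phi(p)\) restricted to \(N-m\) of them is invertible. The implicit function theorem then writes \(M\) near \(p\) as a graph. This yields an open set \(O\subset\mathbb{R}^m\) and a \(C^1\) homeomorphism \(\psi\colon O\to M\cap W\) onto a neighborhood of \(p\) in \(M\), with \(D\psi\) injective and \(\operatorname{im}D\psi(q)=T_{\psi(q)}M\). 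Composing, \(g:=f\circ\psi\colon O\to\mathbb{R}^k\) is \(C^1\). Because \(f\) is an immersion, \(Dg(q)=Df_{\psi(q)}\circ D\psi(q)\) is injective, so \(m\le k\).

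The key step is to straighten \(g\) locally, using the same device as in the immersion theorem. Set \(q_0=\psi^{-1}(p)\) and choose a complement \(L\subset\mathbb{R}^k\) of \(\operatorname{im}Dg(q_0)\), with \(\dim L=k-m\). Define
\[
G\colon O\times L\to\mathbb{R}^k,\qquad G(q,w)=g(q)+w.
\]
Then \(DG(q_0,0)(\dot q,\dot w)=Dg(q_0)\dot q+\dot w\) is an isomorphism. By the inverse function theorem there are neighborhoods \(O'\times L'\) of \((q_0,0)\) and \(Y\) of \(g(q_0)\) with \(G\colon O'\times L'\to Y\) a \(C^1\) diffeomorphism. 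Now \(G^{-1}\) is a \(C^1\) map on \(Y\), and inside \(Y\) the set \(g(O')\) equals \(G(O'\times\{0\})\), which is the zero set of the \(L\)-component of \(G^{-1}\). That component is a submersion, since \(G^{-1}\) is a diffeomorphism. So \(g(O')\) is a \(C^1\) embedded submanifold of \(\mathbb{R}^k\) of dimension \(m\) in the sense of Definition~\ref{def:embedded}. Moreover, the \(O\)-component of \(G^{-1}\), restricted to \(g(O')\), is a \(C^1\) inverse of \(g|_{O'}\).

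To conclude, set \(V:=\psi(O')\). This is open in \(M\) because \(\psi\) is a homeomorphism onto an open subset of \(M\). Then \(f(V)=g(O')\), and \(f|_V=g\circ\psi^{-1}|_V\) is a bijection onto \(f(V)\). Its inverse \(\psi\circ(\mathrm{pr}_O\circ G^{-1})|_{f(V)}\) is \(C^1\), so \(f|_V\) is a \(C^1\) diffeomorphism.

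The main obstacle I expect is a subtlety in this last step. The submanifold description of \(f(V)\) holds only inside \(Y\), so I must make sure \(f(V)\) is exactly \(Y\cap G(O'\times\{0\})\), without stray points. This is automatic here because \(G(O'\times\{0\})\subset Y\) by construction. The point of stating the result locally is that globally an injective immersion need not be an embedding, so no claim is made beyond the neighborhood \(V\).
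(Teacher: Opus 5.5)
Your proof is correct. The paper does not prove this proposition itself. It cites two results from Lee: an immersion restricts to an embedding on a neighborhood of each point (a consequence of the rank theorem), and the image of an embedding is an embedded submanifold onto which the map is a diffeomorphism, the local counterpart of Theorem~\ref{thm:embedding-image}.

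You instead carry out the immersion case of the rank theorem by hand. You take a graph parametrization $\psi$ of $M$ and apply the inverse function theorem to $G(q,w)=g(q)+w$, with $w$ ranging over a complement $L$ of $\operatorname{im}Dg(q_0)$. To get the product neighborhood you need only shrink the inverse-function neighborhood to $O'\times L'$ and set $Y:=G(O'\times L')$, which is open. This merges the two cited steps into one construction, which has two advantages here.

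\begin{enumerate}
\item It produces exactly the data required by Definition~\ref{def:embedded}: the submersion $\mathrm{pr}_L\circ G^{-1}$ whose zero set in $Y$ is $f(V)$. It also gives an explicit $C^1$ inverse $\psi\circ\mathrm{pr}_O\circ G^{-1}$, so Theorem~\ref{thm:embedding-image} is never needed.
\item It works directly at $C^1$ regularity, since it uses only the implicit and inverse function theorems. Lee's book is written for $C^\infty$ manifolds, so citing it for a $C^1$ statement tacitly assumes the proofs carry over; Hirsch treats $C^r$.
\end{enumerate}

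The paper's citation route is shorter. It also separates the local statement from the global upgrade via an explicit inverse, which is exactly how the proposition is used in the proof of Proposition~\ref{prop:equv-hugoniot}.
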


\begin{definition}[Immersions and Embeddings]\label{def:imm-emb}
Let \(\mathcal{M}\) and \(\mathcal{N}\) be \(C^1\) manifolds and let \(f\colon \mathcal{M}\to \mathcal{N}\)
be \(C^1\).
\begin{itemize}
\item \(f\) is an \emph{immersion} if \(Df_p\) is injective for
      every \(p\in \mathcal{M}\).  (Thus a \(C^1\) immersion is simply a \(C^1\) map
      that is an immersion.)
\item \(f\) is a \emph{(topological) embedding} if it is a homeomorphism
      \(\mathcal{M}\to f(\mathcal{M})\) onto its image \(f(\mathcal{M})\) (with the
      subspace topology inherited from $\mathcal{N}$).
\item \(f\) is a \emph{\(C^1\) embedding} if it is a \(C^1\) immersion and a
      (topological) embedding.
\end{itemize}
\end{definition}

An injective immersion need not be an embedding in general. For example,
\(\beta\colon(-\pi,\pi)\to\mathbb{R}^2\), \(\beta(t)=(\sin 2t,\sin t)\), is an
injective \(C^\infty\) immersion whose image accumulates at \(\beta(0)\) as
\(t\to\pm\pi\), so \(\beta\) fails to be a homeomorphism onto its image.

\begin{theorem}[Image of a \(C^1\) embedding {\cite[Prop.~5.2]{LeeManifold}, \cite[Thm.~1.3.1]{Hirsch}}]\label{thm:embedding-image}
Let \(\mathcal{M}\) and \(\mathcal{N}\) be \(C^1\) manifolds, and let \(f\colon \mathcal{M}\to \mathcal{N}\) be a \(C^1\)
embedding. Then \(f(\mathcal{M})\) is a \(C^1\) embedded submanifold of \(\mathcal{N}\), and
\(f\colon \mathcal{M}\to f(\mathcal{M})\) is a \(C^1\) diffeomorphism. In particular,
\(\dim f(\mathcal{M})=\dim \mathcal{M}\).
\end{theorem}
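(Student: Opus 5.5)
The plan is to reduce to the local statement in \cref{cor:immersion-local} and use the topological-embedding hypothesis to upgrade ``locally an embedded piece'' to ``embedded submanifold near every point of the image''. Working in charts of \(\mathcal{M}\) and \(\mathcal{N}\), we may assume \(\mathcal{M}\) is a \(C^1\) embedded submanifold of some \(\mathbb{R}^{N}\) and \(\mathcal{N}\) is an open subset of \(\mathbb{R}^k\). Both being embedded submanifold and being a \(C^1\) diffeomorphism are local properties, so this reduction costs nothing. Fix \(q=f(p)\in f(\mathcal{M})\). Since \(f\) is a \(C^1\) immersion, \cref{cor:immersion-local} gives an open neighborhood \(V\subset\mathcal{M}\) of \(p\) such that \(f(V)\) is a \(C^1\) embedded submanifold of dimension \(\dim\mathcal{M}\). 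It also gives that \(f|_V\colon V\to f(V)\) is a \(C^1\) diffeomorphism. By \cref{def:embedded}, we may further take an open \(W'\ni q\) and a \(C^1\) submersion \(\Phi\colon W'\to\mathbb{R}^{k-m}\) with \(f(V)\cap W'=\Phi^{-1}(0)\), where \(m=\dim\mathcal{M}\).

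The key step, and the one I expect to be the real content, is showing that \(f(V)\) is not merely a submanifold but agrees with all of \(f(\mathcal{M})\) near \(q\). This is exactly where the hypothesis that \(f\) is a homeomorphism onto its image enters. Since \(V\) is open in \(\mathcal{M}\) and \(f\colon\mathcal{M}\to f(\mathcal{M})\) is a homeomorphism for the subspace topology, \(f(V)\) is open in \(f(\mathcal{M})\). Hence there is an open \(W''\subset\mathcal{N}\) with \(f(V)=f(\mathcal{M})\cap W''\). Setting \(W:=W'\cap W''\), we get
\[
f(\mathcal{M})\cap W=f(V)\cap W=\Phi|_W^{-1}(0),
\]
with \(D\Phi(q)\) of rank \(k-m\). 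This is the defining condition of \cref{def:embedded} at \(q\). Since \(q\) was arbitrary, \(f(\mathcal{M})\) is a \(C^1\) embedded submanifold of dimension \(m\). The immersion \(\beta(t)=(\sin 2t,\sin t)\) recorded after \cref{def:imm-emb} shows this step genuinely needs the embedding hypothesis. There, points of \(\beta((-\pi,\pi))\) near \(\beta(0)\) coming from \(t\approx\pm\pi\) lie in every \(W''\) but not in \(\beta(V)\).

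Finally, \(f\colon\mathcal{M}\to f(\mathcal{M})\) is a bijection, and it is \(C^1\) into the submanifold \(f(\mathcal{M})\) because it is \(C^1\) into \(\mathcal{N}\) and \(f(\mathcal{M})\) is embedded. Its inverse restricted to each open set \(f(V)\subset f(\mathcal{M})\) coincides with \((f|_V)^{-1}\), which is \(C^1\) by \cref{cor:immersion-local}. These open sets cover \(f(\mathcal{M})\), so \(f^{-1}\) is \(C^1\), and \(f\) is a \(C^1\) diffeomorphism onto its image. The equality \(\dim f(\mathcal{M})=\dim\mathcal{M}\) then follows because \(Df_p\) is an isomorphism \(T_p\mathcal{M}\to T_{f(p)}f(\mathcal{M})\). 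Alternatively, it is already visible from the local dimension count above.
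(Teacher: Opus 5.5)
This statement is not proved in the paper. It is quoted from \cite[Prop.~5.2]{LeeManifold} and \cite[Thm.~1.3.1]{Hirsch}, and used only in the Euclidean settings of \cref{prop:equv-hugoniot} and \cref{prop:rarefaction-embedding}. Your argument is correct, and it takes a different route from the cited source.

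\emph{Lee's route.} Lee proves Prop.~5.2 by transporting the smooth structure of \(\mathcal{M}\) to \(f(\mathcal{M})\) through \(f\). Then \(f\) is a diffeomorphism onto its image by construction, and the inclusion \(f(\mathcal{M})\hookrightarrow\mathcal{N}\), which equals \(f\circ f^{-1}\), is an embedding. Under Lee's definition of embedded submanifold, that is the entire proof. The local zero-set description that this paper takes as its definition (\cref{def:embedded}) comes from a separate theorem in Lee: slice charts via the rank theorem, then local defining functions.

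\emph{Your route.} You fuse these two steps into one argument aimed directly at \cref{def:embedded}. \Cref{cor:immersion-local} supplies the defining function \(\Phi\) near \(q\). The homeomorphism-onto-image hypothesis, through \(f(V)=f(\mathcal{M})\cap W''\), rules out other sheets of \(f(\mathcal{M})\) accumulating at \(q\); this is exactly the step that fails for \(\beta\). Your version gains a proof in the paper's own terms that isolates where the embedding hypothesis enters, matching \cref{rem:local-vs-global}. Lee's version gains an intrinsic, coordinate-free statement, including uniqueness of the smooth structure making \(f\) a diffeomorphism.

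One point of phrasing: the opening reduction should not be read as replacing \(\mathcal{M}\) by a chart domain. Your openness step uses that \(V\) is open in all of \(\mathcal{M}\) and that \(f\) is a homeomorphism onto all of \(f(\mathcal{M})\). Charts are needed only near \(p\) and \(q\) to invoke \cref{cor:immersion-local}, and that is all your argument actually uses. In the paper's two applications, \(\mathcal{M}\) and \(\mathcal{N}\) are already Euclidean, so the issue does not arise there.
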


\begin{remark}[Local versus global embedding]\label{rem:local-vs-global}
Proposition~\ref{cor:immersion-local} is the local counterpart of
Theorem~\ref{thm:embedding-image}. An immersion is automatically a local
\(C^1\) embedding near each point of its domain, so locally the image is always
an embedded submanifold. However, global conclusions require additional
work. The example after Definition~\ref{def:imm-emb} shows that an
injective immersion can fail to be a topological embedding, so
Proposition~\ref{cor:immersion-local} alone does not guarantee that the
\emph{entire} image is a submanifold. Upgrading to a global \(C^1\)
embedding (and hence to Theorem~\ref{thm:embedding-image}) typically
requires showing that the map is a homeomorphism onto its image, e.g.\ by
constructing a continuous (or \(C^1\)) inverse explicitly.
\end{remark}

\section{Proofs of auxiliary lemmas}
\label{app:aux-proofs}

\subsection{Proof of \texorpdfstring{\cref{prop:equv-hugoniot}}{Proposition~2.6}}
\label{app:equiv-hugoniot-proof}

\begin{proof}[Proof of Proposition~\ref{prop:equv-hugoniot}]
By Proposition~\ref{prop:lifted-mfld}, \(\widehat{\mathcal{H}}_{u_0}\) is a
one-dimensional \(C^1\) embedded submanifold.  By
Lemma~\ref{lem:immersion}, \(\pi|_{\widehat{\mathcal{H}}_{u_0}}\) is an
immersion, and by Lemma~\ref{lem:injectivity} it is injective. To show that $\pi\big|_{\widehat{\mathcal{H}}_{u_0}} \colon \widehat{\mathcal{H}}_{u_0} \to U\setminus\{u_0\}$ is a $C^1$ embedding, it remains to show that the inverse is \(C^1\).

Let \(u\in\mathcal{H}_{u_0}=\pi(\widehat{\mathcal{H}}_{u_0})\).  By
Lemma~\ref{lem:injectivity}, there is a unique \(s\in\mathbb{R}\) such that
\((u,s)\in\widehat{\mathcal{H}}_{u_0}\); define
\(\sigma(u):=s\), so that
\[(\pi|_{\widehat{\mathcal{H}}_{u_0}})^{-1}(u)=(u,\sigma(u)).\]
Fix \((u_*,s_*)\in\widehat{\mathcal{H}}_{u_0}\).  Since \(u_*\neq u_0\), choose an
index \(j\in\{1,\dots,n\}\) such that \((u_* - u_0)_j\neq 0\).  By continuity,
there is a neighborhood \(V\subset U\setminus\{u_0\}\) of \(u_*\) on which
\((u-u_0)_j\neq 0\).  For any \(u\in V\cap \mathcal{H}_{u_0}\), the
Rankine--Hugoniot condition \eqref{eq:rankine-hugoniot} gives
\(F_j(u)-F_j(u_0)=\sigma(u)\,(u-u_0)_j\), hence
\[
  \sigma(u)=\frac{F_j(u)-F_j(u_0)}{(u-u_0)_j}.
\]
Since \(F\in C^2\), the right-hand side is \(C^1\) on \(V\), so \(\sigma\) is \(C^1\)
locally on \(\mathcal{H}_{u_0}\).  On overlaps of such neighborhoods the formulas
agree by uniqueness of \(\sigma\), hence \(\sigma\) (and therefore
\((\pi|_{\widehat{\mathcal{H}}_{u_0}})^{-1}\)) is globally \(C^1\).
Hence \(\pi|_{\widehat{\mathcal{H}}_{u_0}}\) is a \(C^1\) embedding. By
Theorem~\ref{thm:embedding-image}, its image
\(\mathcal{H}_{u_0}=\pi(\widehat{\mathcal{H}}_{u_0})\) is a \(C^1\) embedded
submanifold of \(U\setminus\{u_0\}\). Since
\(\dim \widehat{\mathcal{H}}_{u_0}=1\) by Proposition~\ref{prop:lifted-mfld}, the
submanifold \(\mathcal{H}_{u_0}\) is one-dimensional.
\end{proof}

\subsection{Line fields, sections, and the Grassmannian}
\label{app:eigenline-geometry}

This subsection collects the geometric vocabulary behind the eigenline construction of Section~\ref{ssec:rarefaction}, together with the picture that motivates it. Nothing beyond the definitions is used in the proofs, which work entirely with the local representatives produced by \cref{lem:C1-evec-IFT}.

A \emph{line field} over an open set $W\subset\mathbb{R}^n$ assigns to each $u\in W$ a one-dimensional subspace $E(u)\subset\mathbb{R}^n$, called the \emph{fiber} over $u$. A \emph{local section} on an open set $\Omega\subset W$ is a map $r\colon\Omega\to\mathbb{R}^n\setminus\{0\}$ with $r(u)\in E(u)$ for every $u\in\Omega$; it selects a nonzero representative of each fiber, and the line field is called $C^1$ when every point admits a local $C^1$ section. If $r^\alpha$ and $r^\beta$ are local $C^1$ sections with overlapping domains, both span $E(u)$ at each common state, so $r^\beta(u)=c_{\alpha\beta}(u)\,r^\alpha(u)$ for a nonvanishing \emph{transition scalar} $c_{\alpha\beta}$. The $C^1$ regularity of $c_{\alpha\beta}$ is not automatic from one-dimensionality alone, but follows by taking components. Near any state, choose an index $i$ with $(r^\alpha)_i\neq 0$; then $c_{\alpha\beta}=(r^\beta)_i/(r^\alpha)_i$ is a ratio of nonvanishing $C^1$ functions. For the sections of \cref{lem:C1-evec-IFT} the normalization $(r^\alpha)_{i_\alpha}=1$ makes this explicit, $c_{\alpha\beta}=(r^\beta)_{i_\alpha}$, as computed at the end of the proof in Appendix~\ref{app:evec-IFT-proof}.

The set of all one-dimensional subspaces of $\mathbb{R}^n$ is the Grassmannian $\operatorname{Gr}_1(\mathbb{R}^n)$, equivalently the projective space $\mathbb{RP}^{n-1}$, the quotient of $\mathbb{R}^n\setminus\{0\}$ by the relation $r\sim cr$ for $c\neq 0$. A line field is precisely a map $E\colon W\to\mathbb{RP}^{n-1}$, and the $C^1$ property above says exactly that this map is $C^1$ into the manifold $\mathbb{RP}^{n-1}$. The quotient discards both the length and the sign of a representative, which is the precise sense in which the eigenline $E_k(u)$ is canonical although no preferred eigenvector exists. The unit normalization $|r_k|=1$, for instance, still leaves the two-fold choice $\pm r_k$ in every fiber, and a continuous global sign need not exist unless the line field is orientable; parameterizing lines rather than vectors removes the sign from the outset.

The standard atlas of $\mathbb{RP}^{n-1}$ consists of affine charts, the $i$-th of which contains the lines with nonzero $i$-th coordinate, each represented by its unique point on the affine hyperplane $\{x_i=1\}$. The normalization $r_i=1$ in \cref{lem:C1-evec-IFT} selects exactly this representative of $E_k(u)$, so the eigencharts of that lemma are the pullbacks under $E_k$ of the standard charts of projective space, and the transition scalars $c_{\alpha\beta}$ implement the corresponding changes of chart. \Cref{fig:eigenline-bundle} illustrates two overlapping eigencharts and their transition scalar.

\begin{figure}[t]
  \centering
  \begin{tikzpicture}[>=Stealth, scale=1.0,
      every node/.style={font=\small}]

    \draw[thick] (0,0) ellipse (5.6cm and 2.4cm);
    \node[anchor=west] at (5.7,0.0) {$U$};

    \draw[rounded corners=6pt, fill=blue!5, draw=blue!50, thick]
      (-4.5,-1.25) rectangle (1.0,1.25);
    \node[blue!70!black, anchor=north west]
      at (-4.45,1.20) {$\Omega^{\alpha}$};
    \node[blue!70!black, anchor=south west, font=\scriptsize]
      at (-4.45,-1.20) {$(r_{k}^{\alpha})_{i_\alpha}=1$};

    \draw[rounded corners=6pt, fill=red!5, draw=red!50, thick]
      (-1.0,-1.25) rectangle (4.5,1.25);
    \node[red!70!black, anchor=north east]
      at (4.45,1.20) {$\Omega^{\beta}$};
    \node[red!70!black, anchor=south east, font=\scriptsize]
      at (4.45,-1.20) {$(r_{k}^{\beta})_{i_\beta}=1$};

    \draw[rounded corners=6pt, fill=violet!12, draw=violet!55,
          thick, opacity=0.85]
      (-1.0,-1.25) rectangle (1.0,1.25);
    \node[violet!55!black, font=\scriptsize, anchor=south]
      at (0.0,1.27) {$\Omega^{\alpha}\cap\Omega^{\beta}$};

    \coordinate (u)      at (-0.10, 0.20);
    \coordinate (uTangL) at ($(u)+(-1.05,-0.16)$);
    \coordinate (uTangR) at ($(u)+(1.05, 0.16)$);

    \coordinate (u0)  at (-3.80, 0.30);
    \coordinate (uRt) at ( 4.20, 0.30);
    \draw[thick]
      (u0) .. controls (-2.40, 0.10) and (uTangL) .. (u)
           .. controls (uTangR) and (2.60, 0.35) .. (uRt);

    \node[font=\small, anchor=south] at (-2.50, 0.05) {$\mathcal{R}_{k,u_0}$};

    \draw[fill=white, thick] (u0) circle (2.3pt);
    \node[above=2pt] at (u0) {$u_0$};

    \draw[black!75, dashed, line width=0.7pt]
      ($(u)+(-1.30,-0.195)$) -- ($(u)+(1.85, 0.278)$);

    \node[black!75, font=\scriptsize, anchor=south west]
      at ($(u)+(1.55, 0.30)$) {$E_k(u)$};

    \draw[->, very thick, blue!70!black]
      (u) -- ($(u)+(1.10, 0.165)$);
    \node[blue!70!black, font=\scriptsize, anchor=south]
      at ($(u)+(0.85, 0.20)$) {$r_{k}^{\alpha}(u)$};

    \draw[->, very thick, red!70!black]
      (u) -- ($(u)+(0.55, 0.0825)$);
    \node[red!70!black, font=\scriptsize, anchor=north]
      at ($(u)+(0.40, 0.02)$) {$r_{k}^{\beta}(u)$};

    \draw[fill=black] (u) circle (1.8pt);
    \node[below left=-2pt and 1pt, font=\small] at (u) {$u$};

    \node[font=\scriptsize, align=center, anchor=north,
          draw=violet!55, fill=white, rounded corners=2pt,
          inner sep=3pt]
      at (0.0,-2.75)
      {$r_{k}^{\beta}(u)=c_{\alpha\beta}(u)\,r_{k}^{\alpha}(u)$
       \quad on $\Omega^{\alpha}\cap\Omega^{\beta}$,\\[-1pt]
       with $c_{\alpha\beta}\in C^{1}$ and $c_{\alpha\beta}(u)\ne 0$};

  \end{tikzpicture}
  \caption{Local sections of the eigenline bundle on two overlapping
    eigencharts \(\Omega^{\alpha}\) and \(\Omega^{\beta}\), both contained in
    \(U\). At any state \(u\in\Omega^{\alpha}\cap\Omega^{\beta}\) the eigenline
    \(E_k(u)\) is one-dimensional, so the two IFT-produced representatives
    \(r_k^\alpha(u)\) and \(r_k^\beta(u)\) lie on a common line and differ by a
    nonvanishing \(C^1\) scalar \(c_{\alpha\beta}(u)\). The rarefaction curve
    \(\mathcal R_{k,u_0}\) based at \(u_0\) is an integral curve of any such
    local representative, hence tangent to \(E_k(u)\) at each of its
    states.}
  \label{fig:eigenline-bundle}
\end{figure}

\subsection{Proof of \texorpdfstring{\cref{lem:C1-evec-IFT}}{Lemma~2.12}}
\label{app:evec-IFT-proof}

\begin{proof}[Proof of \cref{lem:C1-evec-IFT}]
We enforce the coordinate normalization \emph{as an equation} as follows. Define $\mathcal{G}: V \times \mathbb{R}^{n+1} \rightarrow \mathbb{R}^{n+1}$ as
\[
\mathcal{G}(u,\lambda,r):=
\begin{pmatrix}
(\mathcal{A}(u)-\lambda I_n)r\\
r_i-1
\end{pmatrix}\in\mathbb{R}^{n+1}.
\]
This map is \(C^1\) because \(\mathcal{A}\in C^1(V)\) and \(r\) is an \emph{independent variable} (not yet a function of \(u\)), and the operations \((u,\lambda,r)\mapsto (\mathcal{A}(u)-\lambda I_n)r\) and \(r\mapsto r_i\) are \(C^1\).

Set \(\mathcal{A}_0:=\mathcal{A}(u_0)\) and \(r_*:=r_0/(r_0)_i\), so that \((r_*)_i=1\) and \(\mathcal{G}(u_0,\lambda_0,r_*)=0\). The Jacobian of \(\mathcal{G}\) with respect to the unknowns \((\lambda,r)\) at the base point is the square \((n+1)\times(n+1)\) matrix
\[
  D_{(\lambda,r)}\mathcal{G}(u_0,\lambda_0,r_*)
  =
  \begin{pmatrix}
    -r_* & (\mathcal{A}_0-\lambda_0 I_n)\\
    0 & e_i^{\!T}
  \end{pmatrix},
\]
where \(e_i\in\mathbb{R}^n\) is the \(i\)-th standard basis vector. To check invertibility, suppose \((\delta\lambda,\delta r)\) lies in the kernel. Then
\[
  (\mathcal{A}_0-\lambda_0 I_n)\,\delta r = (\delta\lambda)\,r_*,\qquad
  (\delta r)_i=0.
\]
Let \(l_0\) be a left eigenvector of \(\mathcal{A}_0\) for \(\lambda_0\), which exists since \(\lambda_0\) is also an eigenvalue of \(\mathcal{A}_0^T\). Left-multiplying the first equation by \(l_0^{\!T}\) yields
\[
  0=l_0^{\!T}(\mathcal{A}_0-\lambda_0 I_n)\,\delta r=(\delta\lambda)\,l_0^{\!T}r_*.
\]
Since \(\lambda_0\) is simple, the associated left and right eigenvectors are not orthogonal, so \(l_0^{\!T}r_*\neq 0\), and we conclude \(\delta\lambda=0\). Hence \((\mathcal{A}_0-\lambda_0 I_n)\delta r=0\), so simplicity of \(\lambda_0\) implies \(\delta r\in\spn\{r_*\}\), and the coordinate constraint forces \(\delta r=0\) (because \((r_*)_i=1\) and \((\delta r)_i = 0\)). Thus the kernel is trivial, the square Jacobian is invertible, and the Implicit Function Theorem yields a neighborhood \(\Omega\subset V\) of \(u_0\) and \(C^1\) functions \(\lambda(u)\), \(r(u)\) solving \(\mathcal{G}(u,\lambda(u),r(u))=0\) for \(u\in\Omega\).

Since \(\lambda_0\) is simple, it is isolated in the spectrum of \(\mathcal{A}_0\); by continuity of \(\mathcal{A}\) and \(\lambda(\cdot)\) and continuity of the spectrum, after shrinking \(\Omega\) the value \(\lambda(u)\) is the unique eigenvalue of \(\mathcal{A}(u)\) near \(\lambda_0\), and it is simple. In particular, for \(\mathcal{A}=DF\) under Assumption~\ref{ass:SH}, the distinct continuous eigenvalue branches give \(\lambda(u)=\lambda_k(u)\) on \(\Omega\).

It remains to justify the overlap statement in the lemma. The IFT step above produces, for each \(u_0\in V\), an eigenchart \(\Omega^\alpha\ni u_0\) and a nonvanishing \(C^1\) section \(r_k^\alpha\colon\Omega^\alpha\to\mathbb{R}^n\setminus\{0\}\) of the eigenline \(E_k(u)=\ker(\mathcal{A}(u)-\lambda(u)I_n)\), normalized by \((r_k^\alpha)_{i_\alpha}=1\) for some index \(i_\alpha\) with \((r_0)_{i_\alpha}\neq 0\). If \(\Omega^\alpha\) and \(\Omega^\beta\) are two such eigencharts (in general with different normalization indices \(i_\alpha,i_\beta\)) and \(u\in\Omega^\alpha\cap\Omega^\beta\), then each fiber \(E_k(u)\) is one-dimensional, so
\[
  r_k^\beta(u)=c_{\alpha\beta}(u)\,r_k^\alpha(u),
  \qquad c_{\alpha\beta}(u)\ne 0.
\]
Reading off the \(i_\alpha\)-th component gives \(c_{\alpha\beta}(u)=(r_k^\beta(u))_{i_\alpha}\), which is \(C^1\) and nonvanishing on the overlap, proving the final assertion of the lemma.
\end{proof}

\begin{remark}[Left representatives]\label{rem:left-representative}
Only the right representative \(r_k\) enters the rarefaction construction. Should a left representative be needed, the same argument applied to \(\mathcal{A}(u)^T\), with normalization \(l_j=1\) for an index \(j\) with \((l_0)_j\neq 0\), produces \(C^1\) maps \(\mu(u)\), \(l(u)\) with \(l(u)^T\mathcal{A}(u)=\mu(u)\,l(u)^T\), and the uniqueness of the eigenvalue near \(\lambda_0\) gives \(\mu=\lambda\) on the common chart.
\end{remark}

\subsection{Proof of \texorpdfstring{\cref{prop:Gamma-maximal}}{Proposition~2.14}}
\label{app:Gamma-maximal-proof}

\begin{proof}[Proof of \cref{prop:Gamma-maximal}]
\emph{Step 1 (Local pieces from the \(t\)-flow).}
Fix \(u_*\in \operatorname{int}(K)\). By \cref{lem:C1-evec-IFT}, there is an eigenchart \(\Omega_*\subset \operatorname{int}(K)\) containing \(u_*\) on which \(r_k\in C^1(\Omega_*)\), and by \cref{prop:rk-maximal} the ODE \(\dot u=r_k(u)\) with \(u(0)=u_*\) has a unique maximal solution \(\phi\). Define \(\Xi(t):=\lambda_k(\phi(t))\). Then
\[
  \Xi'(t)=\nabla\lambda_k(\phi(t))\cdot r_k(\phi(t)),
\]
which is nonzero by genuine nonlinearity. Hence the inverse function theorem gives a local \(C^1\) inverse \(t=t(\xi)\) near \(\xi_*:=\lambda_k(u_*)\). Setting
\[
  \Gamma_{k,*}(\xi):=\phi(t(\xi))
\]
gives a \(C^1\) curve with \(\Gamma_{k,*}(\xi_*)=u_*\) and \(\lambda_k(\Gamma_{k,*}(\xi))=\xi\). The chain rule gives
\[
  \Gamma_{k,*}'(\xi)
  =
  \frac{r_k(\Gamma_{k,*}(\xi))}
  {\nabla\lambda_k(\Gamma_{k,*}(\xi))\cdot r_k(\Gamma_{k,*}(\xi))}.
\]
Differentiating the inverse relation \(\Xi(t(\xi))=\xi\) and integrating from \(\xi_*\) gives the explicit chartwise formula
\begin{equation}\label{eq:t-of-xi}
  t(\xi)=\int_{\xi_*}^{\xi}\frac{d\eta}{\nabla\lambda_k(\Gamma_{k,*}(\eta))\cdot r_k(\Gamma_{k,*}(\eta))},
\end{equation}
valid as long as the curve remains in the eigenchart \(\Omega_*\).

\emph{Step 2 (Uniqueness on overlapping charts).}
Let \((J^{(1)},\Gamma^{(1)})\) and \((J^{(2)},\Gamma^{(2)})\) be two local \(\xi\)-curves satisfying \eqref{eq:Gamma-lambda-param}--\eqref{eq:Gamma-xi-ODE}, and suppose \(\Gamma^{(1)}(\xi_*)=\Gamma^{(2)}(\xi_*)\) at some \(\xi_*\in J^{(1)}\cap J^{(2)}\). Set \(J:=J^{(1)}\cap J^{(2)}\). The agreement set
\[
  S:=\{\xi\in J:\Gamma^{(1)}(\xi)=\Gamma^{(2)}(\xi)\}
\]
is nonempty and closed. To see it is open, fix \(\bar\xi\in S\), set \(\bar u:=\Gamma^{(1)}(\bar\xi)=\Gamma^{(2)}(\bar\xi)\), choose an eigenchart \(\Omega_{\bar u}\ni\bar u\), and restrict to a subinterval where both curves remain in \(\Omega_{\bar u}\). With the same local representative \(r_k\), define the analogues of \eqref{eq:t-of-xi} based at \(\bar\xi\),
\[
  \tau_i(\xi):=\int_{\bar\xi}^{\xi}
  \frac{d\eta}{\nabla\lambda_k(\Gamma^{(i)}(\eta))\cdot r_k(\Gamma^{(i)}(\eta))},
  \qquad i=1,2.
\]
After shrinking the subinterval, each \(\tau_i\) is a \(C^1\) diffeomorphism onto a neighborhood of \(0\), and \(u_i(t):=\Gamma^{(i)}(\tau_i^{-1}(t))\) satisfies \(\dot u_i=r_k(u_i)\), \(u_i(0)=\bar u\). Picard--Lindel\"of uniqueness on \(\Omega_{\bar u}\) gives \(u_1\equiv u_2=:u\) near \(t=0\). Evaluating \(u=u_i\) at \(t=\tau_i(\xi)\) gives \(u(\tau_i(\xi))=\Gamma^{(i)}(\xi)\), and applying \(\lambda_k\) with \eqref{eq:Gamma-lambda-param} yields \(\lambda_k(u(\tau_i(\xi)))=\xi\). Hence \(\tau_1\) and \(\tau_2\) both invert the map \(t\mapsto\lambda_k(u(t))\), which is strictly monotone near \(t=0\) since its derivative \(\nabla\lambda_k\cdot r_k\) is continuous and nonvanishing, so \(\tau_1=\tau_2\), and \(\Gamma^{(i)}=u\circ\tau_i\) gives \(\Gamma^{(1)}\equiv\Gamma^{(2)}\) near \(\bar\xi\). Thus \(S\) is open, and since \(J\) is connected, \(S=J\).

\emph{Step 3 (Gluing and maximality).}
Let \(\mathcal{F}\) denote the collection of open intervals \(J\ni\xi_0\) supporting a curve \(\Gamma_J\in C^1(J;\operatorname{int}(K))\) with \(\Gamma_J(\xi_0)=u_0\) satisfying \eqref{eq:Gamma-lambda-param}--\eqref{eq:Gamma-xi-ODE} chartwise. Step~1 gives \(\mathcal{F}\neq\varnothing\), and Step~2 gives agreement on overlaps. Set
\[
  I:=\bigcup_{J\in\mathcal{F}}J.
\]
This is an open interval containing \(\xi_0\). For \(\xi\in I\), choose \(J\in\mathcal{F}\) with \(\xi\in J\) and define \(\Gamma_k(\xi;u_0):=\Gamma_J(\xi)\); Step~2 makes this independent of \(J\). The resulting curve is \(C^1\) and satisfies the asserted initial condition, parameterization, and derivative formula. Maximality follows from the union construction, since any interval supporting such a curve belongs to \(\mathcal{F}\) and is therefore contained in \(I\), and uniqueness of the maximal pair follows from Step~2 and maximality.

\emph{Step 4 (Behavior at a finite endpoint).}
It remains to show that at any finite endpoint the curve leaves every compact subset of \(\operatorname{int}(K)\). Suppose for example that \(\xi_+<\infty\) and that the conclusion fails, so that there are a compact set \(C\subset\operatorname{int}(K)\) and a sequence \(\xi_j\uparrow\xi_+\) with \(\Gamma_k(\xi_j;u_0)\in C\); we derive a contradiction by extending the curve past \(\xi_+\). Choose a compact set \(C'\subset\operatorname{int}(K)\) with \(C\subset\operatorname{int}(C')\) and set \(d:=\operatorname{dist}(C,\mathbb{R}^n\setminus\operatorname{int}(C'))>0\). The right-hand side of \eqref{eq:Gamma-xi-ODE} does not depend on the representative, so by \cref{lem:C1-evec-IFT} and genuine nonlinearity it is a continuous vector field on \(\operatorname{int}(K)\), and its norm is bounded by some \(M\) on \(C'\). Fix \(j\) with \(M(\xi_+-\xi_j)<d\). If the curve left \(\operatorname{int}(C')\) at some first \(\eta\in(\xi_j,\xi_+)\), integrating \eqref{eq:Gamma-xi-ODE} over \([\xi_j,\eta]\) would give \(d\le|\Gamma_k(\eta;u_0)-\Gamma_k(\xi_j;u_0)|\le M(\eta-\xi_j)<d\). Hence the tail \(\Gamma_k([\xi_j,\xi_+);u_0)\) lies in \(C'\) and is Lipschitz, so it has a limit \(u_*\) as \(\xi\uparrow\xi_+\). Since \(\Gamma_k(\xi_j;u_0)\in C\) and \(C\) is closed, \(u_*\in C\subset\operatorname{int}(K)\), and \(\lambda_k(u_*)=\xi_+\) by continuity.

Fix an eigenchart \(\Omega_{u_*}\subset\operatorname{int}(K)\) containing \(u_*\) with representative \(r_k\); after replacing \(r_k\) by \(-r_k\) if necessary (again a \(C^1\) section, leaving \eqref{eq:Gamma-xi-ODE} unchanged), assume \(\nabla\lambda_k\cdot r_k>0\) on \(\Omega_{u_*}\). Since \(\Gamma_k(\xi;u_0)\to u_*\), we may choose \(\epsilon>0\) so that \(\Gamma_k(\xi;u_0)\in\Omega_{u_*}\) for \(\xi\in[\xi_+-\epsilon,\xi_+)\); this tail together with its limit \(u_*\) is a compact subset of \(\Omega_{u_*}\), so \(\nabla\lambda_k\cdot r_k\geq c>0\) along it. We now pass to the \(t\)-flow, continue it through \(u_*\), and convert back to \(\xi\). First, the time change \(t(\xi)\) of \eqref{eq:t-of-xi}, based at \(\xi_+-\epsilon\) and taken along the tail, is strictly increasing, and since its integrand lies in \((0,1/c]\) it increases to a finite limit \(T\) as \(\xi\uparrow\xi_+\). Writing \(\xi(t)\) for its inverse, the curve \(u(t):=\Gamma_k(\xi(t);u_0)\) solves \(\dot u=r_k(u)\) on \([0,T)\) by \eqref{eq:Gamma-xi-ODE} and the chain rule, and \(u(t)\to u_*\) as \(t\uparrow T\). Second, by \cref{prop:rk-maximal} the flow through \(u_*\) exists on \((-\delta,\delta)\) for some \(\delta>0\), so we extend \(u\) by \(u(T):=u_*\) and \(u(t):=\phi_k(t-T;u_*)\) for \(T\le t<T+\delta\). The extended curve is continuous and solves \(\dot u=r_k(u)\) on either side of \(T\), hence satisfies the integral equation \(u(t)=u(0)+\int_0^t r_k(u(s))\,ds\) on \([0,T+\delta)\) and is a \(C^1\) solution passing through \(u_*\) at time \(T\). Third, \(\lambda_k(u(t))\) is strictly increasing and equals \(\xi_+\) at \(t=T\), so it exceeds \(\xi_+\) for \(t>T\), and inverting \(t\mapsto\lambda_k(u(t))\) as in Step~1 reparameterizes \(u\) into a \(\xi\)-curve on \((\xi_+-\epsilon,\xi_++\delta')\) for some \(\delta'>0\), which returns the original tail for \(\xi<\xi_+\) since \(u(t(\xi))=\Gamma_k(\xi;u_0)\) by construction. The union of \(\Gamma_k(\cdot\,;u_0)\) and this curve is admissible on \((\xi_-,\xi_++\delta')\), so this interval belongs to \(\mathcal{F}\) and is therefore contained in \(I=(\xi_-,\xi_+)\), a contradiction.
\end{proof}

\subsection{Proof of \texorpdfstring{\cref{lem:C1-objective}}{Lemma~4.10}}
\label{app:C1-proof}

Since the grand objective map~\eqref{eq:extended-objective} is the vertical stack of the individual wave maps \(W_1,\dots,W_n\), it suffices to show that each \(W_k\) is \(C^1\) in all its arguments, \((u^{(k)}, \omega_k, u^{(k-1)}, F)\) for \(k\le n-1\) and \((u^{(n-1)}, \omega_n, u_r, F)\) for the backward-anchored terminal map $W_n$ of~\eqref{eq:objective-nxn}. The terminal map $W_n$ shares the two functional forms treated in Parts~A and~B below, with \(u^{(n-1)}\) in the running slot and \(u_r\) in the base slot. Both parts establish joint \(C^1\) regularity in the running state, the wave parameter, the base state, and the flux, so \(W_n\) needs no separate argument.
Throughout this appendix, we make explicit the dependence on the flux \(F\in\mathcal{O}\subset[C^2(K)]^n\) that was suppressed in Sections~\ref{sec:geometric} and~\ref{sec:transversality}.
The key observation is that the eigendata \(\lambda_k, r_k\) and the rarefaction flow \(\Gamma_k\) all depend on \(F\) through the Jacobian matrix \(A(u) = DF(u)\), and this dependence is \(C^1\) in the \([C^2(K)]^n\) topology.
The argument is local, and both assertions of the lemma follow from a single construction. Around each point of \(\mathcal{D}\), Parts~A and~B below produce a neighborhood in the ambient product on which every wave map \(W_k\) is defined and \(C^1\) in all its arguments, the flux \(F\) included. Such a neighborhood lies in \(\mathcal{D}\), so \(\mathcal{D}\) is open; and since each \(W_k\) is \(C^1\) there, so is the stack \(\mathcal{J}\). For rarefactions the neighborhood, and in particular its extent in the \(F\)-direction, is furnished by Steps~1--3, while openness needs only that each \(W_k\) is defined there.

\medskip\noindent\textbf{Part A: Shock waves.}
If the \(k\)-th wave is a shock, then
\[
  W_k(u^{(k)},s_k;\,u^{(k-1)},F) = F(u^{(k)}) - F(u^{(k-1)}) - s_k\,(u^{(k)} - u^{(k-1)}).
\]
The partial Fr\'echet derivatives are
\begin{align}
  D_{u^{(k)}} W_k &= DF(u^{(k)}) - s_k I_n, \label{eq:app-DuH} \\
  D_{s_k} W_k &= -(u^{(k)} - u^{(k-1)}), \label{eq:app-DsH} \\
  D_{u^{(k-1)}} W_k &= -DF(u^{(k-1)}) + s_k I_n, \label{eq:app-Du0H} \\
  D_F W_k(\delta F) &= \delta F(u^{(k)}) - \delta F(u^{(k-1)}), \label{eq:app-DFH}
\end{align}
for \(\delta F \in [C^2(K)]^n\).
Each partial derivative exists and is jointly continuous in the arguments \((u^{(k)}, s_k, u^{(k-1)}, F)\).
For~\eqref{eq:app-DuH} and~\eqref{eq:app-Du0H}, this follows from the joint continuity of the evaluation map \((u, F) \mapsto DF(u)\) on \(\intr(K) \times [C^2(K)]^n\), since \(F \in C^2\) implies \(DF \in C^1(K)\), and small perturbations in \(u\) or \(F\) produce small changes in the matrix \(DF(u)\).
The derivative~\eqref{eq:app-DsH} is clearly continuous in $u^{(k)}$ and $u^{(k-1)}$, and~\eqref{eq:app-DFH} is a bounded linear operator from \([C^2(K)]^n\) to \(\mathbb{R}^n\) with operator norm at most~\(2\), depending continuously on the evaluation points \((u^{(k)}, u^{(k-1)})\).
Hence \(W_k\) is \(C^1\) in all arguments.

\medskip\noindent\textbf{Part B: Rarefaction waves.}
If the \(k\)-th wave is a rarefaction, then
\[
  W_k(u^{(k)},\lambda_k^*;\,u^{(k-1)},F) = u^{(k)} - \Gamma_k(\lambda_k^*;\,u^{(k-1)},F),
\]
where \(\Gamma_k(\xi;\,u_0,F)\) is the \(\xi\)-parameterized rarefaction curve from Proposition~\ref{prop:Gamma-maximal}, now with flux dependence made explicit.
Since \(u^{(k)}\) enters only as an additive term, it suffices to show that \(\Gamma_k(\xi;\,u_0,F)\) is \(C^1\) in \((\xi, u_0, F)\).
Throughout Part~B, \(C^1\) regularity is a local claim, established at a base triple \((\xi_*,u_{0,*},F_*)\) in which \(\xi_*\) lies in the maximal \(\xi\)-interval of the rarefaction curve based at \(u_{0,*}\) for the flux \(F_*\).
We proceed in three steps.

\medskip\noindent\textit{Step~1 (Eigendata are \(C^1\) in \((u,F)\) jointly).}
In \cref{lem:C1-evec-IFT}, the eigendata \((\lambda_k, r_k)\) were shown to be \(C^1\) functions of \(u\) at fixed flux.
To incorporate the flux as a parameter, define the extended map \(\widehat{\mathcal{G}}\colon \intr(K) \times \mathcal{O} \times \mathbb{R}^{n+1} \to \mathbb{R}^{n+1}\) by
\[
  \widehat{\mathcal{G}}(u, F, \lambda, r) :=
  \begin{pmatrix}
    (DF(u) - \lambda I_n)\,r \\[2pt]
    r_i - 1
  \end{pmatrix},
\]
which coincides with the map~\(\mathcal{G}\) from the proof of \cref{lem:C1-evec-IFT}, now with the flux \(F\) promoted to an additional argument.
The map \(\widehat{\mathcal{G}}\) is \(C^1\) in all its arguments jointly, since it depends on \((u,F)\) only through \(DF(u)\), and the evaluation map
\begin{equation}\label{eq:app-eval}
  (u,F) \mapsto DF(u) \;\colon\; \intr(K) \times [C^2(K)]^n \to \mathbb{R}^{n \times n}
\end{equation}
is \(C^1\).
Indeed, \(D_u(DF(u)) = D^2\!F(u)\) exists and is continuous since \(F \in C^2\). In the flux direction, \(D_F(DF(u))(\delta F) = D(\delta F)(u)\) is linear in \(\delta F\), with
\(\|D(\delta F)(u)\| \leq \|\delta F\|_{[C^2(K)]^n}\).
The Jacobian \(D_{(\lambda,r)}\widehat{\mathcal{G}}\) is the square matrix computed in \cref{lem:C1-evec-IFT} and is therefore invertible at the base point.
By the implicit function theorem (with parameters \((u,F)\) and unknowns \((\lambda,r)\)), there exist neighborhoods of any given \((u_0, F_0)\) and \(C^1\) maps
\[
  (u,F) \mapsto \bigl(\lambda_k(u;\,F),\; r_k(u;\,F)\bigr)
\]
satisfying the eigenvalue equation with the prescribed normalization.
The coordinate index \(i\) is fixed only on this neighborhood. If the rarefaction segment passes to another eigenchart, the same construction is repeated there with a possibly different index; agreement on overlaps follows from the one-dimensional eigenspace and the normalization/scaling invariance used in \cref{prop:Gamma-maximal}.

\medskip\noindent\textit{Step~2 (The \(t\)-flow is \(C^1\) in \((t, u_0, F)\)).}
On each eigenchart \(\Omega \subset \intr(K)\), the \(t\)-flow satisfies the ODE
\begin{equation}\label{eq:app-t-flow}
  \dot{\phi}_k(t;\,u_0,F) = r_k\bigl(\phi_k(t;\,u_0,F);\,F\bigr), \qquad \phi_k(0;\,u_0,F) = u_0.
\end{equation}
By Step~1, the vector field \((u,F) \mapsto r_k(u;\,F)\) is \(C^1\) on \(\Omega \times \mathcal{O}\).
After shrinking the neighborhood of \((t_*,u_{0,*},F_*)\), all trajectories under consideration remain in a compact subset of the same eigenchart \(\Omega\) for the time interval used below.
Classical ODE theory gives \(C^1\) dependence of the flow on \((t,u_0)\) for a \(C^1\) (hence locally Lipschitz) vector field.
It remains to verify \(C^1\) dependence on the infinite-dimensional parameter \(F\).

Fix \((u_0, F_0)\) and write \(u(t) = \phi_k(t;\,u_0,F_0)\).
Integrating~\eqref{eq:app-t-flow} gives
\begin{equation}\label{eq:app-integral}
  \phi_k(t;\,u_0,F) = u_0 + \int_0^t r_k\bigl(\phi_k(\tau;\,u_0,F);\,F\bigr)\;\D\tau.
\end{equation}
Setting \(Y(t) := D_F\phi_k(t;\,u_0,F_0)(\delta F)\), formally applying the chain rule for Fr\'echet derivatives under the integral in~\eqref{eq:app-integral} and differentiating with respect to \(t\) yields the variational equation
\begin{equation}\label{eq:app-variational}
  \left\{\begin{aligned}
    \dot{Y}(t) &= M(t)\,Y(t) + B(t), \\
    Y(0) &= 0,
  \end{aligned}\right.
\end{equation}
with
\begin{equation}\label{eq:app-AB}
  M(t) := D_u r_k\bigl(u(t);\,F_0\bigr), \qquad
  B(t) := D_F r_k\bigl(u(t);\,F_0\bigr)(\delta F).
\end{equation}
This is a linear ODE in \(\mathbb{R}^n\) with continuous coefficient \(M(t)\in\mathbb{R}^{n\times n}\) and continuous forcing \(B(t)\in\mathbb{R}^n\).
By Step~1, both \(D_u r_k(u;\,F_0)\) and \(D_F r_k(u;\,F_0)\) are continuous in \(u\).
Since the trajectory \(u(t)\) remains in a compact subset of \(\Omega\) for \(t\) in any bounded interval, there exist constants \(L > 0\) and \(C_1 > 0\) such that
\[
  \|M(t)\| = \|D_u r_k(u(t);\,F_0)\| \leq L, \qquad
  \|D_F r_k(u(t);\,F_0)\|_{\mathrm{op}} \leq C_1
\]
uniformly along the trajectory, where \(\|\cdot\|_{\mathrm{op}}\) denotes the operator norm from \([C^2(K)]^n\) to \(\mathbb{R}^n\).
In particular, \(\|B(t)\| = \|D_F r_k(u(t);\,F_0)(\delta F)\| \leq C_1\,\|\delta F\|_{[C^2(K)]^n}\).
Integrating~\eqref{eq:app-variational} and taking norms, for \(t\ge0\) (the case \(t\le0\) is analogous, integrating over \([t,0]\)),
\begin{equation}\label{eq:app-integral-ineq}
  \|Y(t)\| \leq \int_0^t \bigl[L\,\|Y(\tau)\| + C_1\,\|\delta F\|_{[C^2(K)]^n}\bigr]\;\D\tau.
\end{equation}
The Gronwall inequality then yields
\begin{equation}\label{eq:app-Gronwall}
  \|Y(t)\| \leq \frac{C_1}{L}\bigl(e^{L|t|} - 1\bigr)\,\|\delta F\|_{[C^2(K)]^n}.
\end{equation}
This shows that \(D_F\phi_k(t;\,u_0,F_0)\) is a bounded linear map from \([C^2(K)]^n\) to \(\mathbb{R}^n\).

So far, \(Y(t)\) is only a candidate for the Fr\'echet derivative, obtained by formally interchanging \(D_F\) with the integral in~\eqref{eq:app-integral}.
To verify that \(Y(t) = D_F\phi_k(t;\,u_0,F_0)(\delta F)\) rigorously, we check the definition of Fr\'echet differentiability directly.
Write \(\tilde{u}(t) := \phi_k(t;\,u_0,F_0{+}\delta F)\), so that \(\tilde{u}\) satisfies~\eqref{eq:app-t-flow} with \(F_0{+}\delta F\) in place of~\(F\), and define the remainder
\[
  w(t) := \tilde{u}(t) - u(t) - Y(t), \qquad w(0) = 0.
\]
Then
\[
  \dot{w}(t)
  = r_k(\tilde{u};\,F_0{+}\delta F) - r_k(u;\,F_0) - M(t)\,Y(t) - B(t).
\]
Since \(\tilde{u}\) and \(u\) solve the flow~\eqref{eq:app-t-flow} for the fluxes \(F_0{+}\delta F\) and \(F_0\), subtracting the two equations and applying Gronwall's inequality gives \(\|\tilde{u}(t)-u(t)\| = O(\|\delta F\|)\) uniformly on the time interval under consideration.
We expand \(\dot{w}\) into its linear part plus three remainders in a single step, inserting the first-order terms \(r_k(\tilde{u};F_0)\), \(D_u r_k(u;F_0)(\tilde{u}{-}u)\), and \(D_F r_k(\tilde{u};F_0)(\delta F)\), which gives
\begin{equation}\label{eq:app-remainder}
\begin{aligned}
  \dot{w}(t)
  &= \underbrace{D_u r_k(u;F_0)\,w(t)}_{\textrm{(I)}}
   + \underbrace{\bigl[r_k(\tilde{u};F_0) - r_k(u;F_0) - D_u r_k(u;F_0)(\tilde{u}{-}u)\bigr]}_{\textrm{(II)}} \\
  &\quad + \underbrace{\bigl[r_k(\tilde{u};F_0{+}\delta F) - r_k(\tilde{u};F_0) - D_F r_k(\tilde{u};F_0)(\delta F)\bigr]}_{\textrm{(III)}} \\
  &\quad + \underbrace{\bigl[D_F r_k(\tilde{u};F_0) - D_F r_k(u;F_0)\bigr](\delta F)}_{\textrm{(IV)}}.
\end{aligned}
\end{equation}
We treat the four terms in turn.
\begin{itemize}[leftmargin=*]
\item[\textrm{(I)}] The linear part \(M(t)\,w(t)\), with \(\|M(t)\|\le L\).
\item[\textrm{(II)}] The first-order Taylor remainder of \(u\mapsto r_k(u;F_0)\) at \(u=u(t)\). By uniform continuity of \(D_u r_k\) on the compact trajectory it is \(o(\|\tilde{u}-u\|)\) uniformly in \(t\), hence \(o(\|\delta F\|)\) since \(\|\tilde{u}-u\|=O(\|\delta F\|)\).
\item[\textrm{(III)}] The first-order Taylor remainder of \(F\mapsto r_k(\tilde{u};F)\) at \(F_0\), namely
\[
  \int_0^1\bigl[D_F r_k(\tilde{u};F_0{+}s\delta F)-D_F r_k(\tilde{u};F_0)\bigr](\delta F)\,\D s .
\]
Uniform continuity of \(D_F r_k\) makes it \(o(\|\delta F\|)\) uniformly in \(\tilde{u}\).
\item[\textrm{(IV)}] Bounded by \(\|D_F r_k(\tilde{u};F_0)-D_F r_k(u;F_0)\|_{\mathrm{op}}\,\|\delta F\|\). Since \(\tilde{u}\to u\) as \(\delta F\to0\) and \(D_F r_k\) is uniformly continuous, the operator norm tends to \(0\), so the term is \(o(\|\delta F\|)\).
\end{itemize}
Combining, \(\|\dot{w}(t)\| \leq L\,\|w(t)\| + o(\|\delta F\|)\) with \(w(0)=0\), and Gronwall's inequality gives \(\|w(t)\| = o(\|\delta F\|)\).
That is, \(\phi_k(t;\,u_0,F_0{+}\delta F) = \phi_k(t;\,u_0,F_0) + Y(t) + o(\|\delta F\|)\), which is the definition of Fr\'echet differentiability with derivative \(Y(t) = D_F\phi_k(t;\,u_0,F_0)(\delta F)\).

It remains to assemble the partial derivatives into joint \(C^1\) regularity in \((t,u_0,F)\).
We first note that \(\phi_k\) is itself jointly continuous in \((t,u_0,F)\). Indeed, the vector field \(r_k(\,\cdot\,;F)\) depends continuously on \(F\), uniformly on the compact trajectory, so the flow depends continuously on the parameter \(F\) by the standard continuous-dependence theorem, and continuously on \((t,u_0)\) by classical theory.
Granting this, each first-order partial of \(\phi_k\) exists and is jointly continuous in \((t,u_0,F)\).
\begin{itemize}[leftmargin=*]
\item \(\p_t\phi_k(t;u_0,F)=r_k(\phi_k(t;u_0,F);F)\) is jointly continuous, being a composition of the continuous maps \(\phi_k\) and \(r_k\).
\item \(D_{u_0}\phi_k\) solves the variational equation \(\dot Z = D_u r_k(\phi_k(t;u_0,F);F)\,Z\) with \(Z(0)=I_n\), whose coefficient is jointly continuous in \((t,u_0,F)\) by Step~1, so its solution \(Z\) is jointly continuous as well.
\item \(D_F\phi_k=Y\) solves the variational equation~\eqref{eq:app-variational}, with coefficients \(D_u r_k\) and \(D_F r_k\) evaluated along the running trajectory \(\phi_k(t;u_0,F)\); these are jointly continuous in \((t,u_0,F)\) by Step~1, so \(Y\) is jointly continuous as well.
\end{itemize}
Since all first-order partials of \(\phi_k\) exist and are jointly continuous in \((t,u_0,F)\), the flow \(\phi_k(t;u_0,F)\) is \(C^1\) in \((t,u_0,F)\) jointly.

\medskip\noindent\textit{Step~3 (Reparameterization to \(\xi\) preserves \(C^1\) regularity).}
Define the characteristic speed along the flow,
\begin{equation}\label{eq:app-xi-t}
  \Xi(t,u_0,F) := \lambda_k\bigl(\phi_k(t;\,u_0,F);\,F\bigr),
\end{equation}
which is \(C^1\) in \((t,u_0,F)\) by Steps~1 and~2. Its derivative in \(t\) is given by
\[
  \p_t\Xi = \nabla\lambda_k\bigl(\phi_k(t;\,u_0,F);\,F\bigr)\cdot r_k\bigl(\phi_k(t;\,u_0,F);\,F\bigr),
\]
which is nonzero at the base point because \(\p_t\Xi=\nabla\lambda_k\cdot r_k\) is precisely the genuine-nonlinearity quantity \(\nabla\lambda_k\cdot r_k\neq0\) of Assumption~\ref{ass:GN}.
The parameter-dependent implicit function theorem therefore gives, on a neighborhood of the base triple, a unique \(C^1\) solution \(t=t(\xi;u_0,F)\) of the equation \(\Xi(t,u_0,F)=\xi\).
Therefore,
\[
  \Gamma_k(\xi;\,u_0,F) = \phi_k\bigl(t(\xi;\,u_0,F);\,u_0,F\bigr)
\]
is \(C^1\) in \((\xi,u_0,F)\) as a composition of \(C^1\) maps.

This argument is local to each eigenchart, giving \(C^1\) regularity of \(\Gamma_k\) while \(u_0\) and the target parameter \(\xi\) stay in a single chart.
A rarefaction segment used by a wave map may cross several charts, so we extend the conclusion by following the curve chart by chart.

Fix the base triple \((\xi_*,u_{0,*},F_*)\) and consider the compact subarc of the unperturbed curve from \(u_{0,*}\) out to parameter \(\xi_*\). Cover it by finitely many eigencharts \(\Omega_1,\dots,\Omega_m\) and choose parameter values
\[
  \eta_0=\lambda_k(u_{0,*})<\eta_1<\cdots<\eta_m=\xi_*
\]
so that the piece of the arc with \(\xi\in[\eta_{j-1},\eta_j]\) lies in \(\Omega_j\). By the continuous dependence established above, the same charts and cutoffs still work for all \((u_0,F)\) near the base triple, after shrinking the neighborhood.

Define the states reached at these successive parameter values,
\[
  z_0(u_0,F):=u_0, \qquad z_j(u_0,F):=\Gamma_k\bigl(\eta_j;\,z_{j-1}(u_0,F),\,F\bigr) \quad (j=1,\dots,m),
\]
where each \(\Gamma_k(\eta_j;\,\cdot\,,\,\cdot\,)\) is computed within the single chart \(\Omega_j\). Because the rarefaction curve is a single integral curve of the eigenline field, continuing it from a point it already passes through retraces the same curve, and by uniqueness of the maximal curve (\cref{prop:Gamma-maximal}),
\[
  \Gamma_k(\xi;\,u_0,F)=\Gamma_k\bigl(\xi;\,z_j(u_0,F),\,F\bigr) \qquad (\xi\ge\eta_j).
\]
In particular each \(z_j\) is the state at parameter \(\eta_j\) on the single curve through \(u_0\).

By the single-chart result of Step~3, each map \((w,F)\mapsto\Gamma_k(\eta_j;\,w,\,F)\) is \(C^1\), so the states \(z_j\) are \(C^1\) in \((u_0,F)\) by induction on \(j\). The base state \(z_0=u_0\) is trivially \(C^1\), and \(z_j\) is the \(C^1\) chart-\(\Omega_j\) map applied to the \(C^1\) state \(z_{j-1}\). Finally, for \(\xi\) in the last interval \([\eta_{m-1},\xi_*]\),
\[
  \Gamma_k(\xi;\,u_0,F)=\Gamma_k\bigl(\xi;\,z_{m-1}(u_0,F),\,F\bigr)
\]
is \(C^1\) in \((\xi,u_0,F)\) as a composition of \(C^1\) maps, and the same holds on each earlier interval. Hence \((\xi,u_0,F)\mapsto\Gamma_k(\xi;u_0,F)\) is \(C^1\) on a neighborhood of the base triple.
If instead \(\xi_*<\lambda_k(u_{0,*})\), as for the backward-anchored terminal rarefaction, whose parameter \(\omega_n=\lambda_n(u^{(n-1)})\) lies below the base value \(\lambda_n(u_r)\), the same construction applies verbatim with a decreasing chain \(\eta_0=\lambda_k(u_{0,*})>\eta_1>\cdots>\eta_m=\xi_*\) and the retracing identity for \(\xi\le\eta_j\); nothing in Steps~1--3 distinguishes the two directions of traversal, the flow simply being traversed in the direction of decreasing \(\xi\).
This completes Part~B, and together with Part~A, each wave map \(W_k\) is \(C^1\) in all its arguments. \qed

\subsection{Rank of the base-state derivative of the rarefaction curve}\label{app:Gamma-base-rank}

The genericity proof of \cref{thm:genericity} differentiates the backward-anchored terminal rarefaction condition of~\eqref{eq:objective-nxn} in the datum $u_r$, which occupies the base slot of $\Gamma_n$, and Step~3 of that proof needs the ranks recorded below. The derivative itself exists and is continuous by \cref{lem:C1-objective}, established above. The result is also cited in Section~\ref{ssec:transversality-nxn} for the singularity of the interior rarefaction pushforwards and in \cref{rem:backward-anchoring}.

\begin{lemma}[Base-state derivative of the rarefaction curve]\label{lem:Gamma-base-rank}
Fix $u_0\in\operatorname{int}(K)$ and $\xi\in I$, and write $u_\xi:=\Gamma_k(\xi;u_0)$. The derivative $D_{u_0}\Gamma_k(\xi;u_0)\in\mathbb{R}^{n\times n}$ of the map $u\mapsto\Gamma_k(\xi;u)$ has rank exactly $n-1$, and the augmented matrix $[\,D_{u_0}\Gamma_k(\xi;u_0)\mid r_k(u_\xi)\,]$ has full row rank $n$.
\end{lemma}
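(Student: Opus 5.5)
The plan is to exploit the identity $\lambda_k(\Gamma_k(\xi;u))=\xi$ from \eqref{eq:Gamma-lambda-param}, valid for every base state $u$ near $u_0$ at fixed $\xi$, together with the flow structure of the rarefaction curve. I will work chartwise and use the chain of eigencharts constructed in Part~B of the proof of \cref{lem:C1-objective}, so that $D_{u_0}\Gamma_k(\xi;u_0)$ is available and equals a composition of single-chart derivatives. Write $G(u):=\Gamma_k(\xi;u)$, so $G(u_0)=u_\xi$.

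\emph{Upper bound on the rank.} Differentiating $\lambda_k(G(u))=\xi$ in $u$ at $u_0$ gives $\nabla\lambda_k(u_\xi)^{T}DG(u_0)=0$. By genuine nonlinearity (\cref{ass:GN}), $\nabla\lambda_k(u_\xi)\neq 0$, so the image of $DG(u_0)$ is contained in the hyperplane $\nabla\lambda_k(u_\xi)^\perp$. Hence the rank is at most $n-1$.

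\emph{Lower bound.} I would write $G$ through the $t$-flow on a single eigenchart first, as $G(u)=\phi_k(\tau(u);u)$, where $\tau(u)$ is the unique time with $\lambda_k(\phi_k(\tau(u);u))=\xi$. This time exists by Step~3 of Part~B, via the implicit function theorem using $\nabla\lambda_k\cdot r_k\neq0$. Then
\[
DG(u_0)=D_{u}\phi_k(\tau_0;u_0)+r_k(u_\xi)\,D\tau(u_0).
\]
The flow derivative $\Psi:=D_u\phi_k(\tau_0;u_0)$ is invertible, being the fundamental matrix of the variational equation $\dot Z=D_ur_k\,Z$. So $DG(u_0)=\Psi+r_k(u_\xi)\,w^T$ is a rank-one perturbation of an invertible matrix, and its rank is at least $n-1$. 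Combined with the upper bound, the rank is exactly $n-1$.

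For several charts, I would use the composition $z_j=\Gamma_k(\eta_j;z_{j-1})$ from Part~B. It is cleaner to observe that at fixed $\xi$, the restriction of $G$ to the level set $\{\lambda_k=\lambda_k(u_0)\}$ is a local diffeomorphism onto $\{\lambda_k=\xi\}$ (the holonomy of the flow between level sets), and that the direction $r_k(u_0)$ lies in the kernel. The kernel claim holds because moving $u_0$ along its own rarefaction curve leaves $G$ unchanged, by uniqueness in \cref{prop:Gamma-maximal}: $\Gamma_k(\xi;\Gamma_k(\eta;u_0))=\Gamma_k(\xi;u_0)$. Differentiating this in $\eta$ at $\eta=\lambda_k(u_0)$ gives $DG(u_0)\,r_k(u_0)=0$.

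The holonomy part gives rank $n-1$ via the chart-by-chart composition. Each step is a diffeomorphism between $(n-1)$-dimensional level-set pieces, because its inverse is obtained by the backward flow. I expect the main obstacle to be making this holonomy argument rigorous at only $C^1$ regularity. The level sets of $\lambda_k$ are $C^1$ hypersurfaces, since $\lambda_k\in C^1$ with nonzero gradient, so I will instead argue invertibility directly. The backward map $v\mapsto\Gamma_k(\lambda_k(u_0);v)$ composed with $G$ is the identity on $\{\lambda_k=\lambda_k(u_0)\}$ near $u_0$, so $DG(u_0)$ is injective on the tangent hyperplane $\nabla\lambda_k(u_0)^\perp$. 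Since $r_k(u_0)$ is transverse to this hyperplane, the rank is $n-1$ and the kernel is exactly $\operatorname{span}\{r_k(u_0)\}$.

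\emph{Augmented matrix.} The image of $DG(u_0)$ is the $(n-1)$-dimensional space $\nabla\lambda_k(u_\xi)^\perp$. Genuine nonlinearity gives $\nabla\lambda_k(u_\xi)\cdot r_k(u_\xi)\neq0$, so $r_k(u_\xi)$ lies outside that hyperplane. Therefore $[\,DG(u_0)\mid r_k(u_\xi)\,]$ has column space all of $\mathbb{R}^n$, that is, it has full row rank $n$.
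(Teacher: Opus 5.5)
Your proof is correct. The upper bound, the one-chart decomposition $DG(u_0)=D_u\phi_k+r_k(u_\xi)\,D\tau(u_0)$, the kernel step and the augmented-rank step all match the paper's. The real difference is the lower bound when the arc crosses several eigencharts.

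The paper factors $D_{u_0}\Gamma_k(\xi;u_0)=D_m\cdots D_1$ into one-chart derivatives at intermediate levels $\xi_j$. It then inducts with $\operatorname{rank}(BA)=\operatorname{rank}A-\dim(\operatorname{range}A\cap\ker B)$, using genuine nonlinearity at every junction $v_j$ to get $\nabla\lambda_k(v_j)^{\perp}\cap\operatorname{span}\,r_k(v_j)=\{0\}$.

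You instead build a left inverse on the level hypersurface $\{\lambda_k=\lambda_k(u_0)\}$. There the backward map $v\mapsto\Gamma_k(\lambda_k(u_0);v)$ composed with $G$ is the identity, by the same maximal-curve uniqueness the paper uses for its kernel step. Differentiating along $C^1$ curves in that hypersurface then shows $DG(u_0)$ is injective on $\nabla\lambda_k(u_0)^{\perp}$.

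What your route buys:
\begin{itemize}
\item It is chart-free and needs no intermediate levels.
\item It makes your own one-chart flow formula redundant.
\item It shows $DG(u_0)$ maps $\nabla\lambda_k(u_0)^{\perp}$ isomorphically onto $\nabla\lambda_k(u_\xi)^{\perp}$, which is exactly the holonomy picture you first had in mind. The $C^1$ worry that made you set that picture aside is not an obstacle once you argue this way.
\end{itemize}

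What it costs is two extra inputs, both of which hold:
\begin{itemize}
\item The backward map must be $C^1$ near $u_\xi$. This is the decreasing-chain case at the end of Part~B of the proof of \cref{lem:C1-objective}. It applies because the maximal curve through $u_\xi$ is the one through $u_0$, so its interval contains $\lambda_k(u_0)$.
\item The level set must be a $C^1$ hypersurface with tangent space $\nabla\lambda_k(u_0)^{\perp}$. This follows from $\nabla\lambda_k(u_0)\neq 0$.
\end{itemize}

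The paper's route uses only forward maps along the given arc and records range and kernel at each intermediate state, at the price of the extra bookkeeping.
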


\begin{proof}
The two arguments of $\Gamma_k$ play different roles. The base state selects the curve, since $\Gamma_k(\,\cdot\,;u)$ traverses the maximal integral curve through $u$, and the parameter selects the point on it, since the parameterization~\eqref{eq:Gamma-lambda-param} normalizes $\lambda_k(\Gamma_k(\xi;u))=\xi$, making $\xi$ an absolute eigenvalue level rather than a displacement from the base state. Along any single integral curve the level determines the point uniquely, since the parameterization~\eqref{eq:Gamma-lambda-param} uses the level itself as the curve parameter.

\emph{Range.} At fixed $\xi$, the map $u\mapsto\Gamma_k(\xi;u)$ takes all its values in the level hypersurface $\{\lambda_k=\xi\}$, by the normalization, so its derivative maps into the tangent hyperplane of this hypersurface at the image point. Concretely, differentiating $\lambda_k(\Gamma_k(\xi;u))=\xi$ with respect to $u$ at $u_0$ gives $\nabla\lambda_k(u_\xi)^{T}\,D_{u_0}\Gamma_k(\xi;u_0)=0$, the right-hand side being independent of $u$. Every column of the derivative is thus orthogonal to $\nabla\lambda_k(u_\xi)$, the range lies in the hyperplane $\nabla\lambda_k(u_\xi)^{\perp}$, and the rank is at most $n-1$.

\emph{Kernel.} Moving the base state along the curve itself does not move the image point. To see this, replace $u_0$ by the slid base point $\phi_k(s;u_0)$ for small $s$. The curve is unchanged, since the maximal integral curve through $\phi_k(s;u_0)$ coincides with the one through $u_0$ by the uniqueness in \cref{prop:Gamma-maximal}, and the target level is unchanged, since the first slot still reads $\xi$. Both sides of
\[
  \Gamma_k\bigl(\xi;\,\phi_k(s;u_0)\bigr)=\Gamma_k(\xi;u_0)
\]
therefore ask for the same point, the unique point of the common curve at level $\xi$, and the identity holds for all small $s$. Differentiating in $s$ at $s=0$, where the base point moves with velocity $\p_s\phi_k(s;u_0)\big|_{s=0}=r_k(u_0)$, gives $D_{u_0}\Gamma_k(\xi;u_0)\,r_k(u_0)=0$, so $r_k(u_0)$ lies in the kernel.

\emph{Rank, one eigenchart.} Suppose first that the segment of the curve between $u_0$ and $u_\xi$ lies in a single eigenchart. There the image point is reached by flowing from the base point for a definite time, $\Gamma_k(\xi;u)=\phi_k(t(\xi,u);u)$, where the flow time $t(\xi,u)$ solves $\lambda_k(\phi_k(t;u))=\xi$. The function $\Xi(t)=\lambda_k(\phi_k(t;u))$ is strictly monotone with $\Xi'=\nabla\lambda_k\cdot r_k\neq 0$ by Assumption~\ref{ass:GN}, and the implicit function theorem then makes $t(\xi,u)$ a $C^1$ function of both arguments. The chain rule differentiates through both slots of $\phi_k$, and the derivative is given by
\[
  D_{u_0}\Gamma_k(\xi;u_0)
  = r_k(u_\xi)\,\bigl(\nabla_u t(\xi,u_0)\bigr)^{T} + D_u\phi_k\bigl(t(\xi,u_0);u_0\bigr),
\]
the first term coming from the time slot, where $\p_t\phi_k=r_k$ at the image point, and the second from the base slot. The second term is invertible, being the derivative of a flow map at fixed time, and the first is a rank-one matrix, so their sum has rank at least $n-1$. Combined with the range inclusion above, the rank equals exactly $n-1$, and a dimension count upgrades both inclusions to equalities, so the range is the whole hyperplane $\nabla\lambda_k(u_\xi)^{\perp}$ and the kernel, one-dimensional and containing $r_k(u_0)$, is exactly $\operatorname{span}\,r_k(u_0)$.

\emph{Rank, several eigencharts.} If the segment traverses several eigencharts, use compactness of the segment between $u_0$ and $u_\xi$ to choose finitely many intermediate levels $\xi_0=\lambda_k(u_0),\,\xi_1,\dots,\xi_m=\xi$, monotone from $\xi_0$ to $\xi$, with each subsegment between consecutive levels lying in a single eigenchart. Reaching level $\xi$ from $u$ directly or by way of the intermediate level $\xi_{m-1}$ lands at the same point of the same curve, by the same uniqueness as in the kernel paragraph, so $\Gamma_k(\xi;u)=\Gamma_k\bigl(\xi;\Gamma_k(\xi_{m-1};u)\bigr)$ wherever both sides are defined. Both sides are defined and $C^1$ for all $u$ in a neighborhood of $u_0$, the chartwise chaining argument in Part~B of the proof of \cref{lem:C1-objective} keeping each perturbed subsegment in its eigenchart, so the identity may be differentiated at $u_0$. Write $v_j:=\Gamma_k(\xi_j;u_0)$ for the intermediate points, with $v_0=u_0$ and $v_m=u_\xi$, and let $D_j$ denote the one-chart derivative of $\Gamma_k(\xi_j;\,\cdot\,)$ at $v_{j-1}$, so that iterating the identity above splits the derivative into the product
\[
  D_{u_0}\Gamma_k(\xi;u_0)=D_m\,D_{m-1}\cdots D_1,
\]
where each $D_j$ has rank $n-1$, range $\nabla\lambda_k(v_j)^{\perp}$, and kernel $\operatorname{span}\,r_k(v_{j-1})$ by the one-chart case. For any matrix product, rank--nullity applied to $B$ restricted to $\operatorname{range}A$ gives
\[
  \operatorname{rank}(BA)=\operatorname{rank}(A)-\dim\bigl(\operatorname{range}A\cap\ker B\bigr),
\]
so rank is lost exactly where the range of a partial product meets the kernel of the next factor in a nonzero vector, and here this never happens. By induction on $j$, the partial product $P_j:=D_j\cdots D_1$ has rank $n-1$ and range $\nabla\lambda_k(v_j)^{\perp}$. The base case $j=1$ is the one-chart case. For the step, the intersection $\operatorname{range}P_j\cap\ker D_{j+1}=\nabla\lambda_k(v_j)^{\perp}\cap\operatorname{span}\,r_k(v_j)$ is trivial by Assumption~\ref{ass:GN}, so $\operatorname{rank}P_{j+1}=n-1$, and the range of $P_{j+1}$, an $(n-1)$-dimensional subspace of the $(n-1)$-dimensional hyperplane $\nabla\lambda_k(v_{j+1})^{\perp}$, equals it. The full derivative $P_m$ therefore has rank $n-1$, and its kernel, one-dimensional and containing $\ker D_1=\operatorname{span}\,r_k(u_0)$, is exactly that line.

\emph{Augmented rank.} The columns of the derivative span the hyperplane $\nabla\lambda_k(u_\xi)^{\perp}$, as just established, and the appended column $r_k(u_\xi)$ is transverse to this hyperplane, again because $\nabla\lambda_k(u_\xi)\cdot r_k(u_\xi)\neq 0$ by Assumption~\ref{ass:GN}. The columns of the augmented matrix therefore span all of $\mathbb{R}^n$, and the matrix has full row rank $n$.
\end{proof}

\section{Note on AI-assisted preparation}
\label{app:machine-assisted}

This appendix describes how we used AI assistance and shows three artifacts from the process. The disclosure itself, together with the models and the dates, is in the Acknowledgments.

We used it in four ways.
\begin{enumerate}
  \item \textbf{Transcription.} We wrote the mathematics out by hand, then prompted the models to type the notes into \LaTeX. We checked every transcribed formula against the notes, which are the source of record, and prompted again where it came back wrong.
  \item \textbf{Figures.} We sketched each diagram by hand and wrote down the conventions it had to follow. The model turned the sketch into Ti\emph{k}Z source. We refined it by prompting, then checked the result against the mathematics it depicts.
  \item \textbf{Review.} We passed complete drafts through the models and asked for technical errors, notational inconsistencies, and poor writing. We judged each report ourselves, and acted on a technical one only after re-deriving the point.
  \item \textbf{Discussion.} We put arguments we had already drafted to the models and asked them to object.
\end{enumerate}
In each case we set the direction and supplied the mathematics, and the model supplied source, prose, or objections. We give three examples below, two for the figures and one for transcription and review.

\cref{fig:handwritten-double-rare} is the sketch behind \cref{fig:double-rare-example}. It already carries the layout of the published figure, the lifted state space above the $(x,t)$-plane and a plain panel beside an annotated one. We fixed the drawing conventions in advance, one colour per wave family, set positions for axes and speed labels, and a set layer ordering, so that the diagrams in the paper would match one another. We settled the miscellaneous differences by prompting, and checked the finished figure against the mathematics.

\begin{figure}[htbp]
  \centering
  \includegraphics[width=0.88\textwidth]{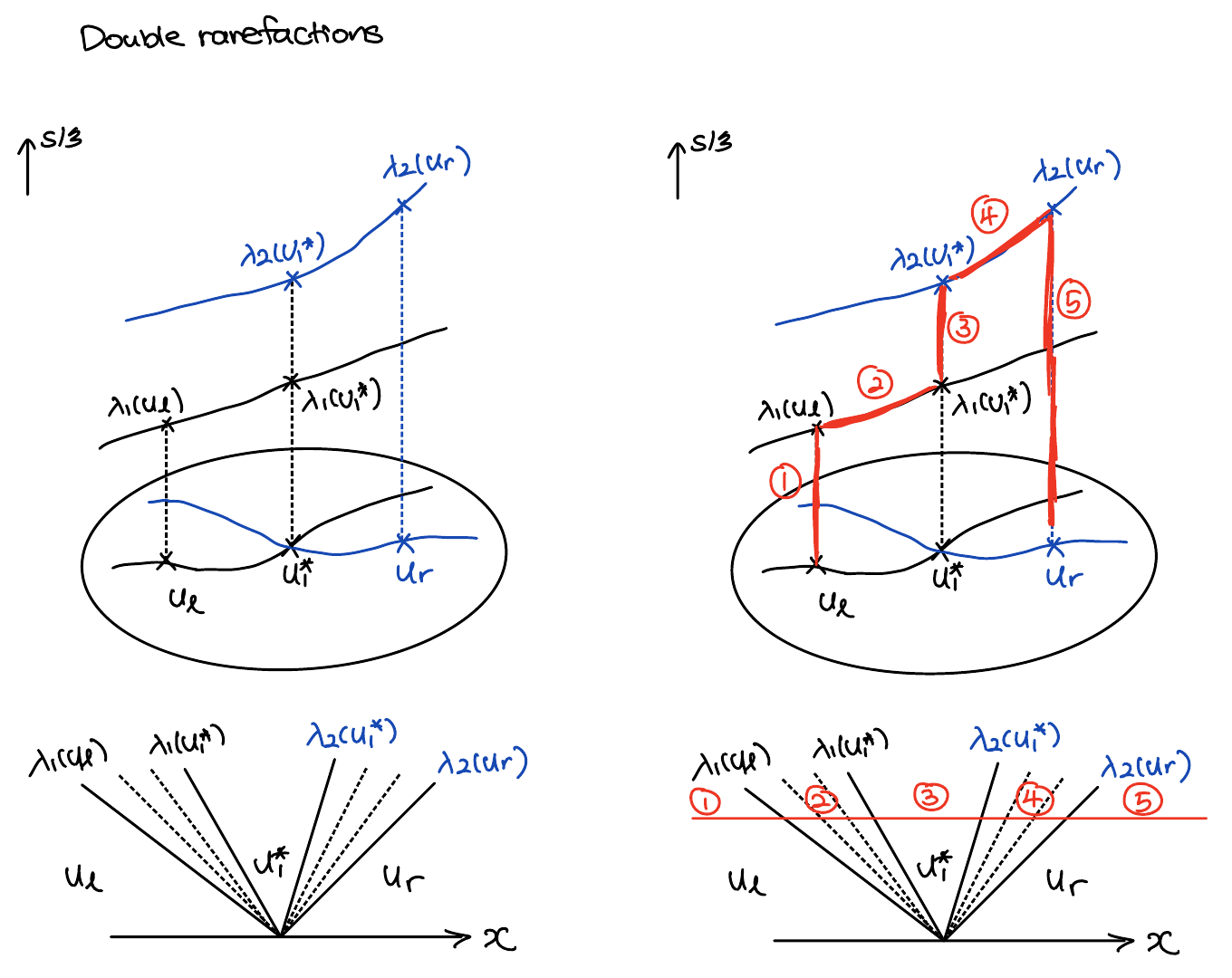}
  \caption{Our handwritten sketch for the $2\times 2$ double-rarefaction Riemann solution. \emph{Top:} the two rarefaction branches in the lifted state space, plain on the left and carrying the trajectory on the right. \emph{Bottom:} the two rarefaction fans in the $(x,t)$-plane, with the constant-$t$ slice and its five numbered regions on the right. \cref{fig:double-rare-example} is the redraw, which keeps this layout and adds the red termini at $u_l$ and $u_r$ with dashed stems locating them in the state space below.}
  \label{fig:handwritten-double-rare}
\end{figure}

\cref{fig:handwritten-seq-trans} is the sketch behind \cref{fig:sequential-transversality}, where the same loop took a few more turns. The left panel has the surface swept out by the 2-Hugoniot curves branching off the 1-Hugoniot locus, and the right panel has the pushforward as a map between tangent spaces. The sketch and the redraw orient the pushforward differently, the sketch sending tangent vectors at $u_2^*$ to tangent vectors at $u_1^*$ and \cref{fig:sequential-transversality} sending them the other way. The maps are inverse to each other and both are defined at an admissible zero (Section~\ref{ssec:transversality-3x3}), so neither is wrong, and we settled on the direction of~\eqref{eq:pushforward-hugoniot} after a few rounds of prompting and checking.

\begin{figure}[htbp]
  \centering
  \includegraphics[width=0.92\textwidth]{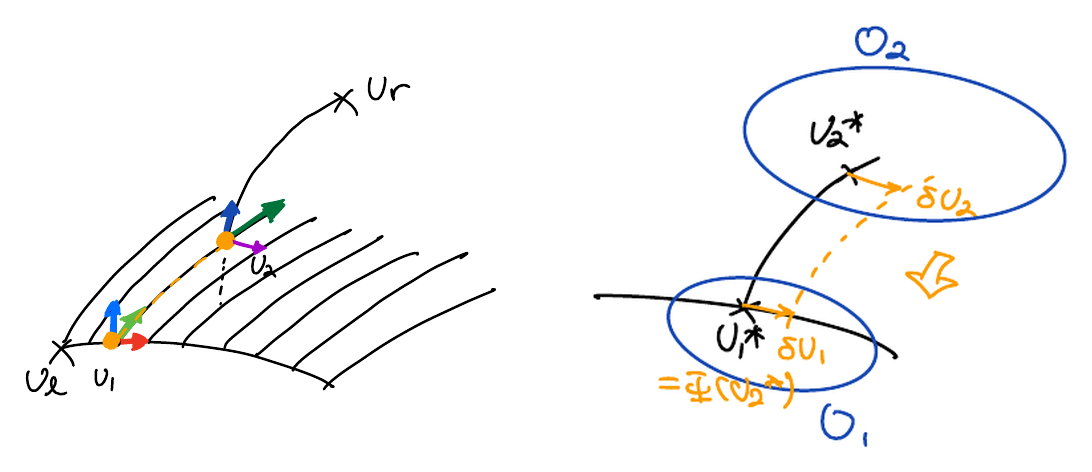}
  \caption{Two panels from our handwritten sketch for the $3\times 3$ sequential transversality condition, cropped from one page and set side by side. \emph{Left:} the 2-Hugoniot curves branching off the 1-Hugoniot locus sweep out a surface, the backward 3-Hugoniot from $u_r$ meets it at $u_2^*$, and the coloured arrows are the tangent directions compared there. \emph{Right:} the pushforward as a map between tangent spaces. \cref{fig:sequential-transversality} is the redraw. The sketch orients the pushforward from $u_2^*$ to $u_1^*$, the redraw the other way, matching $\Phi_{12}$ in~\eqref{eq:pushforward-hugoniot}.}
  \label{fig:handwritten-seq-trans}
\end{figure}

\cref{fig:handwritten-3x3} shows the stages interacting. The page has the $3\times 3$ objective map and its block Jacobian. Above the third Rankine--Hugoniot row there is a margin note saying the anchoring of that row does not matter, since the two orderings are equivalent. On this page that is true. The two anchorings have the same zero set by the antisymmetry $H(u_2^*,s_3^*;\,u_r)=-H(u_r,s_3^*;\,u_2^*)$ (Section~\ref{ssec:transversality-3x3}), and all three waves here are shocks, so either anchoring gives an invertible $B_3$ and the two differ only by a sign. It fails once the terminal wave is a rarefaction. The forward-anchored block is then singular by \cref{lem:Gamma-base-rank}, so \cref{lem:block-reduction} does not apply. We transcribed the forward-anchored form as written, and it stayed in the draft until a review pass caught it. The fix is to anchor the terminal wave at $u_r$ throughout, as in~\eqref{eq:objective-nxn}. \cref{rem:backward-anchoring} gives the reason. Later passes returned smaller things, notational inconsistencies and sign conventions in a worked example.

\begin{figure}[htbp]
  \centering
  \includegraphics[width=0.82\textwidth]{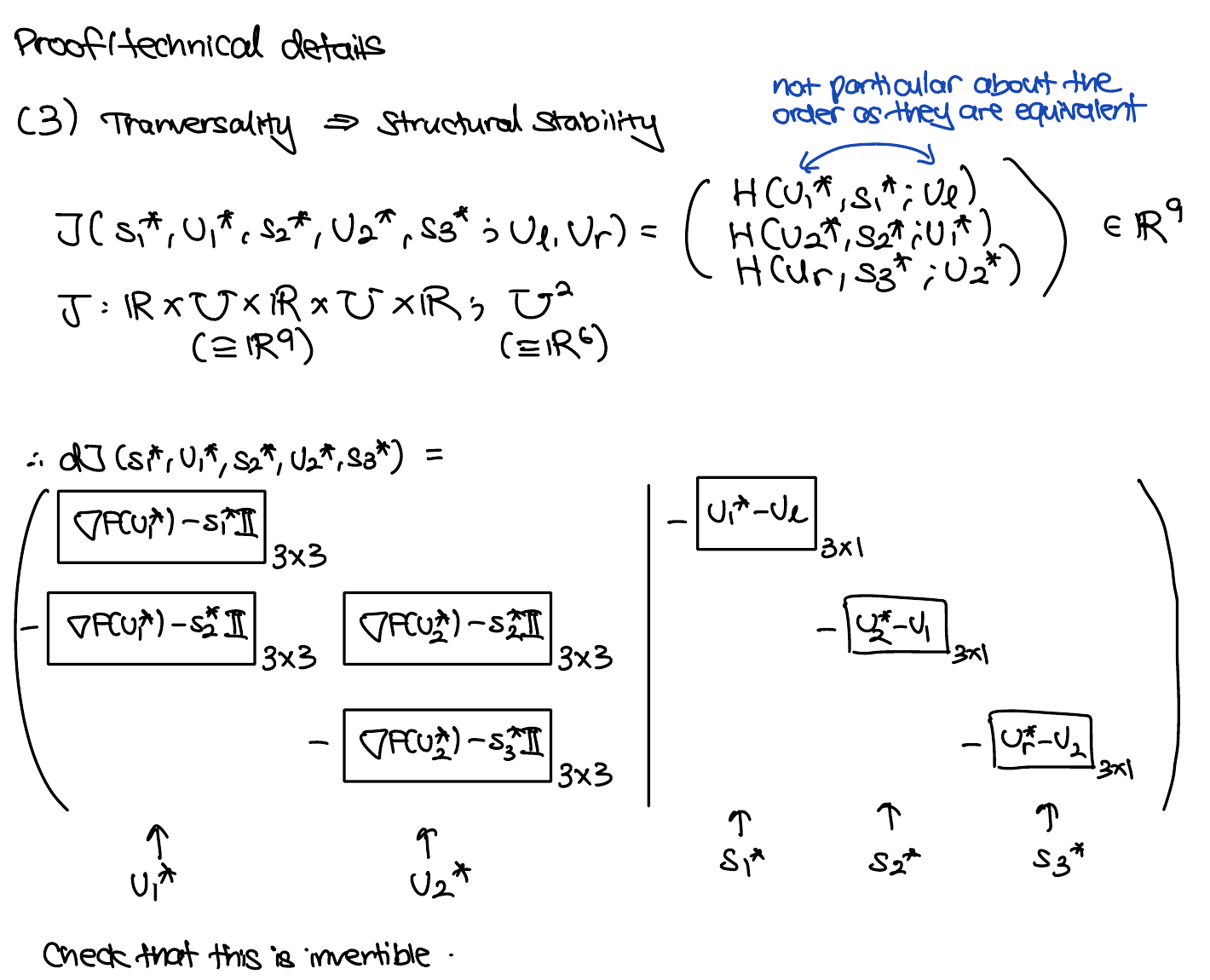}
  \caption{Page~3 of our handwritten notes for the $3\times 3$ case, as written. The objective map at the top is forward-anchored, its third row reading $H(u_r,s_3^*;\,u_2^*)$, and the note above it says the ordering does not matter. The published form~\eqref{eq:objective-3x3} anchors that row at the right state instead, reading $H(u_2^*,s_3^*;\,u_r)$, and the matching block and speed entries of~\eqref{eq:jacobian-3x3-block} change sign. \cref{rem:backward-anchoring} says why the two are not interchangeable when the terminal wave is a rarefaction. The instruction at the foot of the page is answered by \cref{lem:block-reduction}.}
  \label{fig:handwritten-3x3}
\end{figure}

AI assistance does not establish correctness. A formal proof in a system such as Lean would establish it, and doing so is possible in principle, but the PDE and differential topology libraries are still being built and their conventions are not settled, and hyperbolic conservation laws are not a current focus of that work. Most of the effort would go into that groundwork rather than into the arguments of this paper.

\clearpage
\section*{Acknowledgments}
AI assistance was used in preparing this paper, for transcribing the authors' handwritten notes into \LaTeX, for generating \LaTeX\ and Ti\emph{k}Z source to the authors' specification, for reading complete drafts for technical errors, notational inconsistencies, and poor writing, and for discussing arguments the authors had already drafted, all of which Appendix~\ref{app:machine-assisted} describes in more detail.
No theorem, definition, proposition, or lemma was stated by a model, no proof was accepted without the authors re-deriving it in full, and reports from a reading pass were acted on only after the point at issue had been re-derived.
The models used were Anthropic's Claude Opus~4.6 and OpenAI's GPT-5.4 in the earlier drafting, and Anthropic's Claude Opus~5, Fable~5, and Fable~5.1, together with OpenAI's GPT-5.6 in the later checking and refining of the proofs, between March and September 2026.
The authors assume responsibility for all content.

\bibliographystyle{siamplain}
\bibliography{references}

\end{document}